   \newtheorem{lemma}{Lemma}[section]
   \newtheorem{theorem}{Theorem}[section]
   \newtheorem{remark}{Remark}[section]
   \newcommand{\be}{\begin{equation}}
   \newcommand{\ee}{\end{equation}}
\begin{document}
    \title{A Two-Level Fourth-Order Approach For Time-Fractional Convection-Diffusion-Reaction Equation With Variable Coefficients}
   \author{Eric Ngondiep$^{\text{\,a\,b}}$}
   \date{$^{\text{\,a\,}}$\small{Department of Mathematics and Statistics, College of Science, Imam Mohammad Ibn Saud\\ Islamic University
        (IMSIU), $90950$ Riyadh $11632,$ Saudi Arabia.}\\
     \text{\,}\\
       $^{\text{\,b\,}}$\small{Hydrological Research Centre, Institute for Geological and Mining Research, 4110 Yaounde-Cameroon.}\\
     \text{,}\\
        \textbf{Email addresses:} ericngondiep@gmail.com/engondiep@imamu.edu.sa}
   \maketitle

   \textbf{Abstract.}
   This paper develops a two-level fourth-order scheme for solving time-fractional convection-diffusion-reaction equation with variable coefficients
   subjected to suitable initial and boundary conditions. The basis properties of the new approach are investigated and both stability and error 
   estimates of the proposed numerical scheme are deeply analyzed in the $L^{\infty}(0,T;L^{2})$-norm. The theory indicates that the method is
   unconditionally stable with convergence of order $O(k^{2-\frac{\lambda}{2}}+h^{4})$, where $k$ and $h$ are time step and mesh size, respectively,
   and $\lambda\in(0,1)$. This result suggests that the two-level fourth-order technique is more efficient than a large class of numerical techniques
   widely studied in the literature for the considered problem. Some numerical evidences are provided to verify the unconditional stability and
   convergence rate of the proposed algorithm.\\
    \text{\,}\\

   \ \noindent {\bf Keywords:} time-fractional Caputo derivative, convection-diffusion-reaction equation with variable coefficients, two-level 
   fourth-order approach, stability analysis,  convergence rate.\\
   \\
   {\bf AMS Subject Classification (MSC). 65M12, 65M06}.

  \section{Introduction}\label{sec1}
   In the last few decades, fractional calculus have played an important role in all areas in sciences and engineering \cite{3zzy,14zzy,17zzy}. Most
   recently, the great potential of the fractional partial differential equations (FPDEs) has motivated the development of efficient numerical schemes
   for solving both stationary and evolutionary FPDEs describing nonlinear phenomena in medical, biological, physical, financial, and geological systems.
   For more details, the readers can consult \cite{5jrs,16jrs,29jrs,39jrs}. Although the concepts and the calculus of fractional derivative are few
   centuries old, fractional advection-diffusion equations have received a great interest in recent years and have been used to model a broad range of
   problems in fluid flow, electrostatics, electricity, heat, electrodynamic and sound \cite{3zzy,5zzy}. Owing to the increasing applications, a particular
    attention is given to the analytical and numerical solutions of FPDEs. In the literature the big challenge with such equations is the design of
    efficient and accurate numerical methods and computational cost is the main issue to be considered for any numerical scheme. For classical integer
    order ordinary or partial differential equations (ODEs/PDEs) such as: systems of ODEs, Navier-Stokes equations, mixed Stokes-Darcy model, shallow
    water equations, advection-diffusions problems, convection-diffusion-reaction equations, heat conduction
   \cite{en1,en2,zl1984,en19,de2004,en3,en4,en5,en6,tg13,tg17,en7,en8,tg10,en10}, a wide class of numerical approaches have been deeply
   analyzed: finite difference techniques, two-level MacCormack procedure, spectral methods, full implicit finite difference schemes, two-level factored 
   approaches, compact ADI methods and multi-level finite difference formulations. For more details, we refer the readers to
    \cite{tg8,gv1983,en11,en12,en9,tg31,tg14,en13,9mc,en13,en14,22mc,en15,en16,23mc,en17,38mc,en18} and references therein. For
    fractional partial differential equations, a variety of numerical methods have been developed and their stability and accuracy have been widely 
   discussed. Such techniques for solving FPSEs considered implicit meshless schemes based on radial basis functions, finite element formulations, 
   spectral methods, finite difference procedures, meshless methods \cite{8zzy,11zzy,9zzy,15zzy,16zzy,18zzy,20zzy}. This work deals with a
   two-level fourth-order scheme applied to the time-fractional convection-diffusion-reaction equation with variable coefficients. Though the
    proposed approach is slightly more accurate (convergence order $O(k^{2-\frac{\lambda}{2}})$) than a large class of numerical methods widely studied
    in literature \cite{18mc,31mc,38mc,11mc}, it is also less time computing. The main motivation of this paper are the following: (a) we introduce
    a new parameter $\alpha$ (where $\alpha=1-\lambda$) in the discrete time when approximating the time-fractional derivative ($cD_{0t}^{\lambda}u$) at
    the grid point $(x_{j},t_{i+\alpha})$; (b) the considered problem has variable coefficients for convection, diffusion and reaction terms; (c) a new
    two-level method of order $O(k^{2-\frac{\lambda}{2}}+h^{4})$ is developed and (d) both stability and convergence rate of the proposed algorithm are 
   deeply analyzed in the $L^{\infty}(0,T;L^{2})$-norm (also $L^{2}(0,T;L^{2})$-norm for numerical examples) by introducing generalized sequences with 
   positive increasing terms. The use of generalized sequences instead of the ordinary ones in the study of the stability and accuracy of a numerical 
   method is an innovation since in our knowledge, there is not available works in the literature that use the generalized sequences in the analysis of 
   both stability and convergence.\\

     In this work, we propose a modified time discrete form combined with finite difference techniques for the time-fractional 
   convection-diffusion-reaction equation involving Caputo fractional derivative and describing by the following initial-boundary value problem

     \begin{equation}\label{1e}
      cD_{0t}^{\lambda}u(x,t)-q(t)u_{xx}+p(t)u_{x}+g(x,t)u(x,t)=s(x,t),
     \end{equation}
     where the coefficients $q$ and $p$ are functions depending on the time variable $t$, whereas $g$ and $s$ are functions that depend on both variables
      $x$ and $t$. Here $p(t)\geq0$, $g(x,t)\geq0,$ and $q(t)\geq\gamma>0$, where $\gamma$ is a constant. In \cite{18mc}, the Caputo fractional derivative
      $cD_{0t}^{\lambda}u$ $(0<\lambda<1)$ of the function $u$ is defined as
     \begin{equation*}
       cD_{0t}^{\lambda}u(x,t)=\frac{1}{\Gamma(1-\lambda)}\int_{0}^{t}\frac{\partial_{\tau}u(x,\tau)}{(t-\tau)^{\lambda}}d\tau,
     \end{equation*}
      where $\partial_{\tau}u$ denotes $\frac{\partial u}{\partial \tau}$. The initial condition for equation $(\ref{1e})$ is given by
     \begin{equation}\label{2e}
      u(x,0)=\psi_{1}(x),\text{\,\,\,\,on\,\,\,\,}(0,L_{1}),
     \end{equation}
      and the boundary conditions are defined by
     \begin{equation}\label{3e}
     u(0,t)=\psi_{2}(t),\text{\,\,\,\,and\,\,\,\,}u(L_{1},t)=\psi_{2}(t),\text{\,\,\,\,on\,\,\,\,}(0,T).
     \end{equation}
     Furthermore, we assume that the exact solution of problem $(\ref{1e})$-$(\ref{3e})$ is sufficiently smooth for the discretization and error
     estimates. We recall that the aim of this paper is to construct an efficient solution to the time-fractional equation $(\ref{1e})$ subjects to
     suitable initial and boundary conditions given by relations $(\ref{2e})$ and $(\ref{3e})$, respectively. More specifically, the attention
     is focused on the following three items:

     \begin{description}
      \item[(i1)] detailed description of a two-level fourth-order approach for time-fractional convection-diffusion-reaction equation $(\ref{1e})$ 
      with appropriate initial-boundary conditions $(\ref{2e})$-$(\ref{3e})$,
      \item[(i2)] analysis of the unconditional stability and error estimates of the proposed approach,
      \item[(i3)] a broad range of numerical examples that confirm the theoretical study.
     \end{description}

     In the following we proceed as follows. Section $\ref{sec2}$ considers a full description of the two-level fourth scheme for solving the
     given initial-boundary value problem $(\ref{1e})$-$(\ref{3e})$. In Section $\ref{sec3}$, we analyze both stability and error estimates of the
     new algorithm using the $L^{\infty}(0,T;L^{2})$-norm. A wide set of numerical evidences that confirm the theoretical results are presented
     and discussed in Section $\ref{sec4}$. Finally, we draw in Section $\ref{sec5}$ the general conclusion and provide our future works.

    \section{Full description of a two-level time-fractional method}\label{sec2}
    This section deals with a detailed description of the two-level time-fractional scheme for solving the initial-boundary value problem 
    $(\ref{1e})$-$(\ref{3e})$. The proposed algorithm is a two-step implicit method which approximates the time-fractional Caputo derivative
    using forward difference in each step and the convection and diffusion terms are approximated by the use of central difference. Since the aim of 
    this section is to develop the method, without loss of generality we should use a constant time step $k=\Delta t$ and space step $h=\Delta x$. 
    Let $\lambda$ be a
    real number satisfying $0<\lambda<1,$ $L_{1}>0$ and $T>0$ be the space interval length and time interval length, respectively. Suppose $N$ and $M$ 
    be two positive integers. Set $x_{j}=jh,$ $t_{i}=ik$ and let the superscript denoting the time level and space level of the approximation, where
     $k=\frac{T}{N}$ and $h=\frac{L_{1}}{M}$. Consider the uniform mesh space $\mathcal{Y}_{kh}=\{(x_{j},t_{i}),\text{\,}0\leq i\leq N;\text{\,}0\leq j
    \leq M\}$. Furthermore, we introduce the positive parameter $\alpha=1-\lambda$. For a function $u\in\mathcal{C}^{3}(0,T;H^{6}(0,L_{1}))$, the Caputo
     fractional derivatives of order $\lambda$ of the function $u$ at the grid points $(x_{j},t_{i+\frac{1}{2}+\alpha})$ and $(x_{j},t_{i+1+\alpha})$ are
     given by\\

      \begin{equation}\label{1}
      cD_{0t}^{\lambda}u(x_{j},t_{i+\frac{1}{2}+\alpha})=\frac{1}{\Gamma(1-\lambda)}\int_{0}^{t_{i+\frac{1}{2}+\alpha}}u_{\tau,j}(\tau)
      (t_{i+\frac{1}{2}+\alpha}-\tau)^{-\lambda}d\tau,
      \end{equation}
      and
      \begin{equation}\label{2}
      cD_{0t}^{\lambda}u(x_{j},t_{i+1+\alpha})=\frac{1}{\Gamma(1-\lambda)}\int_{0}^{t_{i+1+\alpha}}u_{\tau,j}(\tau)(t_{i+1+\alpha}-\tau)^{-\lambda}d\tau.
      \end{equation}

      For the convenient of writing, we should set in the following the notations
       \begin{equation*}
      cD_{0t}^{\lambda}u(x_{j},t_{i+\frac{1}{2}+\alpha})=cD_{0t}^{\lambda}u_{j}^{i+\frac{1}{2}+\alpha}\text{\,\,\,\,\,and\,\,\,\,\,}
      cD_{0t}^{\lambda}u(x_{j},t_{i+1+\alpha})=cD_{0t}^{\lambda}u_{j}^{i+1+\alpha},\text{\,\,\,\,for\,\,\,\,}i\geq0.
      \end{equation*}

       Using this, equation $(\ref{1})$ can be rewritten as
       \begin{equation*}
        cD_{0t}^{\lambda}u_{j}^{i+\frac{1}{2}+\alpha}=\frac{1}{\Gamma(1-\lambda)}\int_{0}^{t_{i+\frac{1}{2}+\alpha}}u_{\tau,j}(\tau)
      (t_{i+\frac{1}{2}+\alpha}-\tau)^{-\lambda}d\tau=\frac{1}{\Gamma(1-\lambda)}\underset{l=0}{\overset{i-1}\sum}\int_{t_{l+\frac{1}{2}}}^{t_{l+\frac{3}{2}
      }}u_{\tau,j}(\tau)(t_{i+\frac{1}{2}+\alpha}-\tau)^{-\lambda}d\tau
       \end{equation*}
       \begin{equation}\label{3}
      +\frac{1}{\Gamma(1-\lambda)}\left[\int_{0}^{t_{\frac{1}{2}}}u_{\tau,j}(\tau)(t_{i+\frac{1}{2}+\alpha}-\tau)^{-\lambda}d\tau
      +\int_{t_{i+\frac{1}{2}}}^{t_{i+\frac{1}{2}+\alpha}}u_{\tau,j}(\tau)(t_{i+\frac{1}{2}+\alpha}-\tau)^{-\lambda}d\tau\right].
       \end{equation}
       Furthermore, let introduce the following operators
       \begin{equation}\label{2a}
        \delta_{t} u_{j}^{i}=\frac{u_{j}^{i+\frac{1}{2}}-u_{j}^{i}}{k/2};\text{\,}\delta_{t}u_{j}^{i+\frac{1}{2}}=
        \frac{u_{j}^{i+1}-u_{j}^{i+\frac{1}{2}}}{k/2};\text{\,}\delta_{x}u_{j+\frac{1}{2}}^{i}=\frac{u_{j+1}^{i}
        -u_{j}^{i}}{h};\text{\,}\delta_{x}^{2}u_{j}^{i}=\frac{u_{j+1}^{i}-2u_{j}^{i}+u_{j-1}^{i}}{h^{2}}.
       \end{equation}
       We consider the following norms and inner product
       \begin{equation}\label{2aa}
        \|u^{i}\|_{L^{2}}=\left(h\underset{j=1}{\overset{M-1}\sum}|u_{j}^{i}|^{2}\right)^{\frac{1}{2}},\text{\,\,\,}
        \||u|\|_{\mathcal{C}^{6,3}_{D}}=\underset{0\leq r\leq 6}{\underset{0\leq s\leq 3}\max}\left\{\underset{0\leq i\leq
        N}{\max}\left\|\frac{\partial^{r+s}u}{\partial x^{r}\partial t^{s}}(t_{i})\right\|_{L^{2}}\right\}\text{\,\,\,and\,\,\,}(u^{i},v^{i})=
        h\underset{j=1}{\overset{M-1}\sum}u_{j}^{i}v_{j}^{i},
       \end{equation}
       where $D=[0,L_{1}]\times[0,T]$, $|\cdot|$ denotes the $\mathbb{C}$-norm. The spaces $L^{2}(0,L_{1})$ and $\mathcal{C}^{6,3}_{D}$ are endowed with
        the norms $\|\cdot\|_{L^{2}}$ and $\||\cdot|\|_{\mathcal{C}^{6,3}_{D}}$, respectively, whereas the Hilbert space $L^{2}(0,L_{1})$ is equipped with
        the inner product $(\cdot,\cdot)$.\\

       Let $P_{2}^{u_{j},l}$ be the second-order polynomial interpolating $u_{j}(t)$ at the points $(t_{l+\frac{1}{2}},u_{j}^{l+\frac{1}{2}}),$
       $(t_{l+1},u_{j}^{l+1})$ and $(t_{l+\frac{3}{2}},u_{j}^{l+\frac{3}{2}})$. Simple calculations give
       \begin{equation}\label{4}
        P_{2}^{u_{j},l}(t)=\frac{2}{k^{2}}\left[(t-t_{l+1})(t-t_{l+\frac{3}{2}})u_{j}^{l+\frac{1}{2}}-2(t-t_{l+\frac{1}{2}})(t-t_{l+\frac{3}{2}})u_{j}^{l+1}
        +(t-t_{l+\frac{1}{2}})(t-t_{l+1})u_{j}^{l+\frac{3}{2}}\right],
       \end{equation}
       and the corresponding error is defined as
       \begin{equation}\label{4a}
        E^{l}_{j}(t)=u_{j}(t)-P_{2}^{u_{j},l}(t)=\frac{1}{6}(t-t_{l+\frac{1}{2}})(t-t_{l+1})(t-t_{l+\frac{3}{2}})u_{3t,j}(t^{\epsilon}_{l}),
       \end{equation}
       where $t^{\epsilon}_{l}$ is between the minimum and maximum of $t_{l+\frac{1}{2}}$, $t_{l+1}$, $t_{l+\frac{3}{2}}$ and $t$. The derivative of
       $P_{2}^{u_{j},l}$ with respect to the time-variable provides
       \begin{equation}\label{5}
        P_{2,t}^{u_{j},l}(t)=\frac{2}{k^{2}}\left[2(u_{j}^{l+\frac{1}{2}}-2u_{j}^{l+1}+u_{j}^{l+\frac{3}{2}})t-(t_{l+1}+t_{l+\frac{3}{2}})u_{j}^{l+\frac{1}{2}}+
        2(t_{l+\frac{1}{2}}+t_{l+\frac{3}{2}})u_{j}^{l+1}-(t_{l+\frac{1}{2}}+t_{l+1})u_{j}^{l+\frac{3}{2}}\right].
       \end{equation}
       Setting
       \begin{equation*}
       I_{i}^{(\alpha)}=\frac{1}{\Gamma(1-\lambda)}\left[\int_{0}^{t_{\frac{1}{2}}}u_{\tau,j}(\tau)(t_{i+\frac{1}{2}+\alpha}-\tau)^{-\lambda}d\tau
      +\int_{t_{i+\frac{1}{2}}}^{t_{i+\frac{1}{2}+\alpha}}u_{\tau,j}(\tau)(t_{i+\frac{1}{2}+\alpha}-\tau)^{-\lambda}d\tau\right]+
       \end{equation*}
       \begin{equation}\label{6}
      \frac{1}{\Gamma(1-\lambda)}\underset{l=0}{\overset{i-1}\sum}\int_{t_{l+\frac{1}{2}}}^{t_{l+\frac{3}{2}}}E^{l}_{\tau,j}(\tau)(t_{i+\frac{1}{2}
      +\alpha}-\tau)^{-\lambda}d\tau.
       \end{equation}
       Since $(1-\lambda)\Gamma(1-\lambda)=\Gamma(2-\lambda)$, plugging equations $(\ref{3})$-$(\ref{6})$, direct computations yield
       \begin{equation*}
        cD_{0t}^{\lambda}u_{j}^{i+\frac{1}{2}+\alpha}=\frac{2}{k^{2}\Gamma(2-\lambda)}\underset{l=0}{\overset{i-1}\sum}\left[
        -2\left(u_{j}^{l+\frac{1}{2}}-2u_{j}^{l+1}+u_{j}^{l+\frac{3}{2}}\right)\left(\tau(t_{i+\frac{1}{2}+\alpha}-\tau)^{1-\lambda}+
        \frac{1}{2-\lambda}(t_{i+\frac{1}{2}+\alpha}-\tau)^{2-\lambda}\right)\right.
       \end{equation*}
       \begin{equation*}
        \left.+\left((t_{l+1}+t_{l+\frac{3}{2}})u_{j}^{l+\frac{1}{2}}-2(t_{l+\frac{1}{2}}+t_{l+\frac{3}{2}})u_{j}^{l+1}+(t_{l+\frac{1}{2}}+t_{l+1})
        u_{j}^{l+\frac{3}{2}}\right)(t_{i+\frac{1}{2}+\alpha}-\tau)^{1-\lambda}\right]_{t_{l+\frac{1}{2}}}^{t_{l+\frac{3}{2}}}+I_{i}^{(\alpha)}
       \end{equation*}
       \begin{equation*}
        =\frac{2k^{2-\lambda}}{k^{2}\Gamma(2-\lambda)}\underset{l=0}{\overset{i-1}\sum}\left[2\left(u_{j}^{l+\frac{1}{2}}-2u_{j}^{l+1}
        +u_{j}^{l+\frac{3}{2}}\right)\left((l+\frac{1}{2})(i+\alpha-l)^{1-\lambda}+\frac{1}{2-\lambda}(i+\alpha-l)^{2-\lambda}\right.\right.
       \end{equation*}
       \begin{equation*}
        \left.-(l+\frac{3}{2})(i+\alpha-l-1)^{1-\lambda}-\frac{1}{2-\lambda}(i+\alpha-l-1)^{2-\lambda}\right)+
       \end{equation*}
       \begin{equation*}
        \left.\left((2l+\frac{5}{2})u_{j}^{l+\frac{1}{2}}-2(2l+2)u_{j}^{l+1}+(2l+\frac{3}{2})
        u_{j}^{l+\frac{3}{2}}\right)\left((i+\alpha-l-1)^{1-\lambda}-(i+\alpha-l)^{1-\lambda}\right)\right]+I_{i}^{(\alpha)}.
       \end{equation*}
       \begin{equation*}
        =\frac{k^{1-\lambda}}{\Gamma(2-\lambda)}\underset{l=0}{\overset{i-1}\sum}\left[\left((i+\alpha-l)^{1-\lambda}-
        (i+\alpha-l-1)^{1-\lambda}\right)\delta_{t}u_{j}^{l+\frac{1}{2}}+\left[\frac{2}{2-\lambda}\left((i+\alpha-l)^{2-\lambda}-
        (i+\alpha-l-1)^{2-\lambda}\right)\right.\right.
       \end{equation*}
       \begin{equation*}
        \left.\left.-\frac{1}{2}(i+\alpha-l)^{1-\lambda}-\frac{3}{2}(i+\alpha-l-1)^{1-\lambda}\right](\delta_{t}u_{j}^{l+1}-
        \delta_{t}u_{j}^{l+\frac{1}{2}})\right]+I_{i}^{(\alpha)}
       \end{equation*}
       \begin{equation}\label{10}
        =\frac{k^{1-\lambda}}{\Gamma(2-\lambda)}\underset{l=0}{\overset{i-1}\sum}\left[\widetilde{f}^{\lambda\alpha}_{i+\frac{1}{2},l}
        \delta_{t}u_{j}^{l+1}+(\widetilde{d}^{\lambda\alpha}_{i+\frac{1}{2},l}-\widetilde{f}^{\lambda\alpha}_{i+\frac{1}{2},l})
        \delta_{t}u_{j}^{l+\frac{1}{2}}\right]+I_{i}^{(\alpha)},
       \end{equation}
       where
       \begin{equation}\label{11}
        \widetilde{d}^{\lambda\alpha}_{i+\frac{1}{2},l}=(i+\alpha-l)^{1-\lambda}-(i+\alpha-l-1)^{1-\lambda},
       \end{equation}
       and
       \begin{equation}\label{12}
        \widetilde{f}^{\lambda\alpha}_{i+\frac{1}{2},l}=\frac{2}{2-\lambda}\left[(i+\alpha-l)^{2-\lambda}-
        (i+\alpha-l-1)^{2-\lambda}\right]-\frac{1}{2}[(i+\alpha-l)^{1-\lambda}+3(i+\alpha-l-1)^{1-\lambda}],
       \end{equation}
       for $0\leq l\leq i-1,$ $i\geq1.$ In addition, we should find an explicit expression of the term $cD_{0t}^{\lambda}u_{j}^{\frac{1}{2}+\alpha}$.
        \begin{equation*}
        cD_{0t}^{\lambda}u_{j}^{\frac{1}{2}+\alpha}=\frac{1}{\Gamma(1-\lambda)}\int_{0}^{t_{\frac{1}{2}+\alpha}}u_{\tau,j}(\tau)
      (t_{\frac{1}{2}+\alpha}-\tau)^{-\lambda}d\tau=\frac{1}{\Gamma(1-\lambda)}\left[\int_{0}^{t_{\frac{1}{2}+\alpha}}\delta_{t}u_{j}(t_{0})
      (t_{\frac{1}{2}+\alpha}-\tau)^{-\lambda}d\tau+\right.
       \end{equation*}
       \begin{equation}\label{14}
      \left.\int_{0}^{t_{\frac{1}{2}+\alpha}}(u_{\tau,j}(\tau)-\delta_{t}u_{j}^{0})(t_{\frac{1}{2}
      +\alpha}-\tau)^{-\lambda}d\tau\right]=c\Delta_{0t}^{\lambda}u_{j}^{\frac{1}{2}+\alpha}+\frac{1}{\Gamma(1-\lambda)}\int_{0}^{t_{\frac{1}{2}
      +\alpha}}(u_{\tau,j}(\tau)-P_{1,\tau}^{u_{j},0})(t_{\frac{1}{2}+\alpha}-\tau)^{-\lambda}d\tau,
       \end{equation}
       where
        \begin{equation}\label{15}
      c\Delta_{0t}^{\lambda}u_{j}^{\frac{1}{2}+\alpha}=\frac{k^{1-\lambda}(\frac{1}{2}+\alpha)^{1-\lambda}}{\Gamma(2-\lambda)}\delta_{t}u_{j}^{0}.
       \end{equation}
       In a similar manner, the term $\frac{1}{\Gamma(1-\lambda)}\int_{t_{i+\frac{1}{2}}}^{t_{i+\frac{1}{2}+\alpha}}u_{\tau,j}(\tau)
      (t_{i+\frac{1}{2}+\alpha}-\tau)^{-\lambda}d\tau$ can be rewritten as
      \begin{equation}\label{16}
      \frac{1}{\Gamma(1-\lambda)}\int_{t_{i+\frac{1}{2}}}^{t_{i+\frac{1}{2}+\alpha}}u_{\tau,j}(\tau)
      (t_{i+\frac{1}{2}+\alpha}-\tau)^{-\lambda}d\tau=\frac{k^{1-\lambda}\alpha^{1-\lambda}}{\Gamma(2-\lambda)}\delta_{t}u_{j}^{i}+
      \frac{1}{\Gamma(1-\lambda)}\int_{t_{i+\frac{1}{2}}}^{t_{i+\frac{1}{2}+\alpha}}\frac{u_{\tau,j}(\tau)-\delta_{t}u_{j}^{i}}{(t_{i+\frac{1}{2}+\alpha}
      -\tau)^{\lambda}}d\tau.
      \end{equation}
      Furthermore, it is easy to observe that
      \begin{equation}\label{17}
      \frac{1}{\Gamma(1-\lambda)}\int_{0}^{t_{\frac{1}{2}}}\frac{u_{\tau,j}(\tau)}{(t_{i+\frac{1}{2}+\alpha}-\tau)^{\lambda}}d\tau=
      \frac{\delta_{t}u_{j}^{0}}{\Gamma(2-\lambda)}[(i+\frac{1}{2}+\alpha)^{1-\lambda}-(i+\alpha)^{1-\lambda}]+\frac{1}{\Gamma(1-\lambda)}
      \int_{0}^{t_{\frac{1}{2}}}\frac{u_{\tau,j}(\tau)-\delta_{t}u_{j}^{0}}{(t_{i+\frac{1}{2}+\alpha}-\tau)^{\lambda}}d\tau.
      \end{equation}
      Plugging equations $(\ref{4})$, $(\ref{10})$, $(\ref{14})$ and $(\ref{15})$, it is not hard to observe that
      \begin{equation*}
      cD_{0t}^{\lambda}u_{j}^{\frac{1}{2}+\alpha}=c\Delta_{0t}^{\lambda}u_{j}^{\frac{1}{2}+\alpha}+\frac{1}{\Gamma(1-\lambda)}\int_{0}^{t_{\frac{1}{2}
      +\alpha}}(u_{\tau,j}(\tau)-P_{1,\tau}^{u_{j},0})(t_{\frac{1}{2}+\alpha}-\tau)^{-\lambda}d\tau=
      \end{equation*}
       \begin{equation}\label{18}
       f_{\frac{1}{2},0}^{\lambda\alpha}\delta_{t}u_{j}^{0}+\frac{1}{\Gamma(1-\lambda)}\int_{0}^{t_{\frac{1}{2}
      +\alpha}}(u_{\tau,j}(\tau)-P_{1,\tau}^{u_{j},0})(t_{\frac{1}{2}+\alpha}-\tau)^{-\lambda}d\tau,\text{\,\,\,\,\,if\,\,\,\,\,}i=0,
       \end{equation}
       and
       \begin{equation*}
        cD_{0t}^{\lambda}u_{j}^{i+\frac{1}{2}+\alpha}=c\Delta_{0t}^{\lambda}u_{j}^{i+\frac{1}{2}+\alpha}+\frac{1}{\Gamma(1-\lambda)}\left\{
        \underset{l=0}{\overset{i-1}\sum}\int_{t_{l+\frac{1}{2}}}^{t_{l+\frac{3}{2}}}E^{l}_{\tau,j}(\tau)(t_{i+\frac{1}{2}
      +\alpha}-\tau)^{-\lambda}d\tau+\int_{0}^{t_{\frac{1}{2}}}\frac{u_{\tau,j}(\tau)-\delta_{t}u_{j}^{0}}{(t_{i+\frac{1}{2}
      +\alpha}-\tau)^{\lambda}}d\tau\right.
       \end{equation*}
       \begin{equation}\label{19}
      \left.+\int_{t_{i+\frac{1}{2}}}^{t_{i+\frac{1}{2}+\alpha}}(u_{\tau,j}(\tau)-\delta_{t}u_{j}^{i})(t_{i+\frac{1}{2}+\alpha}
      -\tau)^{-\lambda}d\tau\right\},\text{\,\,\,\,\,for\,\,\,\,\,}i\geq1,
       \end{equation}
       where
       \begin{equation}\label{20}
       c\Delta_{0t}^{\lambda}u_{j}^{\frac{1}{2}+\alpha}=f_{\frac{1}{2},0}^{\lambda\alpha}\delta_{t}u_{j}^{0},\text{\,\,\,\,\,if\,\,\,\,\,}i=0,
       \end{equation}
       \begin{equation}\label{21}
       c\Delta_{0t}^{\lambda}u_{j}^{i+\frac{1}{2}+\alpha}=\text{\,\.{f}}_{i+\frac{1}{2},0}^{\lambda\alpha}\delta_{t}u_{j}^{0}+
       \underset{l=0}{\overset{i-1}\sum}\left[f^{\lambda\alpha}_{i+\frac{1}{2},l}\delta_{t}u_{j}^{l+1}+(d^{\lambda\alpha}_{i+\frac{1}{2},l}
        -f^{\lambda\alpha}_{i+\frac{1}{2},l})\delta_{t}u_{j}^{l+\frac{1}{2}}\right]+f^{\lambda\alpha}_{i+\frac{1}{2},i}\delta_{t}u_{j}^{i}
        ,\text{\,\,\,\,\,if\,\,\,\,\,}i\geq1,
       \end{equation}
       where
       \begin{equation*}
        f^{\lambda\alpha}_{\frac{1}{2},0}=\frac{k^{1-\lambda}}{\Gamma(2-\lambda)}\widetilde{f}^{\lambda\alpha}_{\frac{1}{2},0};\text{\,\,}
       \text{\,\.{f}}^{\lambda\alpha}_{i+\frac{1}{2},0}=\frac{k^{1-\lambda}}{\Gamma(2-\lambda)}\widetilde{\text{\,\.{f}}}^{\lambda\alpha}_{i+\frac{1}{2},0};
       \text{\,\,}
       f^{\lambda\alpha}_{i+\frac{1}{2},i}=\frac{k^{1-\lambda}}{\Gamma(2-\lambda)}\widetilde{f}^{\lambda\alpha}_{i+\frac{1}{2},i};\text{\,\,}
       f^{\lambda\alpha}_{i+\frac{1}{2},l}=\frac{k^{1-\lambda}}{\Gamma(2-\lambda)}\widetilde{f}^{\lambda\alpha}_{i+\frac{1}{2},l};\text{\,\,}
       \end{equation*}
       \begin{equation}\label{22}
        d^{\lambda\alpha}_{i+\frac{1}{2},l}=\frac{k^{1-\lambda}}{\Gamma(2-\lambda)}\widetilde{d}^{\lambda\alpha}_{i+\frac{1}{2},l};
        \text{\,\,\,\,for\,\,\,\,}0\leq l\leq i-1,
       \end{equation}
       with
       \begin{equation}\label{22a}
        \widetilde{f}^{\lambda\alpha}_{i+\frac{1}{2},i}=\alpha^{1-\lambda};\text{\,\,}
        \widetilde{f}^{\lambda\alpha}_{\frac{1}{2},0}=(\frac{1}{2}+\alpha)^{1-\lambda};\text{\,\,}
        \widetilde{\text{\,\.{f}}}^{\lambda\alpha}_{i+\frac{1}{2},0}=(i+\frac{1}{2}+\alpha)^{1-\lambda}-(i+\alpha)^{1-\lambda},
       \end{equation}
       where $\widetilde{d}^{\lambda\alpha}_{i+\frac{1}{2},l}$ and $\widetilde{f}^{\lambda\alpha}_{i+\frac{1}{2},l}$, for $0\leq l\leq i-1$, are given by
       relations $(\ref{11})$ and $(\ref{12})$, respectively.\\

       To construct the first-level of the proposed approach, we should find a similar approximation for the function $q(t)u_{2x}-p(t)u_{x}-g(x,t)u$ at the
        point $(x_{j},t_{i+\frac{1}{2}+\alpha})$. Firstly,
        \begin{equation}\label{23}
        [q(t)u_{2x}-p(t)u_{x}-g(x,t)u+s(x,t)](x_{j},t_{i+\frac{1}{2}+\alpha})=q^{i+\frac{1}{2}+\alpha}u_{2x,j}^{i+\frac{1}{2}+\alpha}-p^{i+\frac{1}{2}+\alpha}
        u_{x,j}^{i+\frac{1}{2}+\alpha}-g_{j}^{i+\frac{1}{2}+\alpha}u_{j}^{i+\frac{1}{2}+\alpha}+s_{j}^{i+\frac{1}{2}+\alpha}.
       \end{equation}
       Expanding the Taylor series for $u$ about the point $(x_{j},t_{i+\frac{1}{2}+\alpha})$ with step size $h$ using both forward and backward
        representations provides
       \begin{equation}\label{24}
       u_{j+2}^{i+\frac{1}{2}+\alpha}=u_{j}^{i+\frac{1}{2}+\alpha}+2hu_{x,j}^{i+\frac{1}{2}+\alpha}+2h^{2}u_{2x,j}^{i+\frac{1}{2}+\alpha}+
       \frac{4h^{3}}{3}u_{3x,j}^{i+\frac{1}{2}+\alpha}+\frac{2h^{4}}{3}u_{4x,j}^{i+\frac{1}{2}+\alpha}+\frac{4h^{5}}{15}u_{5x,j}^{i+\frac{1}{2}+\alpha}+
       \frac{4h^{6}}{45}u_{6x}^{i+\frac{1}{2}+\alpha}(\epsilon_{j}^{7}),
       \end{equation}
       \begin{equation}\label{25}
       u_{j-2}^{i+\frac{1}{2}+\alpha}=u_{j}^{i+\frac{1}{2}+\alpha}-2hu_{x,j}^{i+\frac{1}{2}+\alpha}+2h^{2}u_{2x,j}^{i+\frac{1}{2}+\alpha}-
       \frac{4h^{3}}{3}u_{3x,j}^{i+\frac{1}{2}+\alpha}+\frac{2h^{4}}{3}u_{4x,j}^{i+\frac{1}{2}+\alpha}-\frac{4h^{5}}{15}u_{5x,j}^{i+\frac{1}{2}+\alpha}+
       \frac{4h^{6}}{45}u_{6x}^{i+\frac{1}{2}+\alpha}(\epsilon_{j}^{6}),
       \end{equation}
       \begin{equation}\label{26}
       u_{j+1}^{i+\frac{1}{2}+\alpha}=u_{j}^{i+\frac{1}{2}+\alpha}+hu_{x,j}^{i+\frac{1}{2}+\alpha}2+\frac{h^{2}}{2}u_{2x,j}^{i+\frac{1}{2}+\alpha}+
       \frac{h^{3}}{6}u_{3x,j}^{i+\frac{1}{2}+\alpha}+\frac{h^{4}}{24}u_{4x,j}^{i+\frac{1}{2}+\alpha}+\frac{h^{5}}{120}u_{5x,j}^{i+\frac{1}{2}+\alpha}+
       \frac{h^{6}}{720}u_{6x}^{i+\frac{1}{2}+\alpha}(\epsilon_{j}^{5}),
       \end{equation}
       \begin{equation}\label{27}
       u_{j-1}^{i+\frac{1}{2}+\alpha}=u_{j}^{i+\frac{1}{2}+\alpha}-hu_{x,j}^{i+\frac{1}{2}+\alpha}2+\frac{h^{2}}{2}u_{2x,j}^{i+\frac{1}{2}+\alpha}-
       \frac{h^{3}}{6}u_{3x,j}^{i+\frac{1}{2}+\alpha}+\frac{h^{4}}{24}u_{4x,j}^{i+\frac{1}{2}+\alpha}-\frac{h^{5}}{120}u_{5x,j}^{i+\frac{1}{2}+\alpha}+
       \frac{h^{6}}{720}u_{6x}^{i+\frac{1}{2}+\alpha}(\epsilon_{j}^{4}),
       \end{equation}
       where
       \begin{equation}\label{28}
       \epsilon_{j}^{4}\in(x_{j-1},x_{j}),\text{\,\,}\epsilon_{j}^{5}\in(x_{j},x_{j+1}),\text{\,\,}\epsilon_{j}^{6}\in(x_{j-2},x_{j}),\text{\,\,}
        \epsilon_{j}^{7}\in(x_{j},x_{j+2}).
       \end{equation}
       Combining equations $(\ref{24})$ and $(\ref{25})$, direct computations give
       \begin{equation}\label{29}
       u_{j+2}^{i+\frac{1}{2}+\alpha}+u_{j-2}^{i+\frac{1}{2}+\alpha}=2u_{j}^{i+\frac{1}{2}+\alpha}+4h^{2}u_{2x,j}^{i+\frac{1}{2}+\alpha}+
       \frac{4h^{4}}{3}u_{4x,j}^{i+\frac{1}{2}+\alpha}+\frac{4h^{6}}{45}[u_{6x}^{i+\frac{1}{2}+\alpha}(\epsilon_{j}^{7})+u_{6x}^{i+\frac{1}{2}+
       \alpha}(\epsilon_{j}^{6})].
       \end{equation}
       In a similar manner, plugging equations $(\ref{26})$ and $(\ref{27})$ to get
       \begin{equation}\label{30}
       u_{j+1}^{i+\frac{1}{2}+\alpha}+u_{j-1}^{i+\frac{1}{2}+\alpha}=2u_{j}^{i+\frac{1}{2}+\alpha}+h^{2}u_{2x,j}^{i+\frac{1}{2}+\alpha}+
       \frac{h^{4}}{12}u_{4x,j}^{i+\frac{1}{2}+\alpha}+\frac{h^{6}}{360}[u_{6x}^{i+\frac{1}{2}+\alpha}(\epsilon_{j}^{4})+u_{6x}^{i+\frac{1}{2}+
       \alpha}(\epsilon_{j}^{5})].
       \end{equation}
       Multiplying both sides of equations $(\ref{30})$ and $(\ref{29})$ by $12$ and $\frac{-3}{4}$, respectively, and adding the resulting equations
       to obtain
       \begin{equation*}
       12(u_{j+1}^{i+\frac{1}{2}+\alpha}+u_{j-1}^{i+\frac{1}{2}+\alpha})-\frac{3}{4}(u_{j+2}^{i+\frac{1}{2}+\alpha}+u_{j-2}^{i+\frac{1}{2}
       +\alpha})=\frac{45}{2}u_{j}^{i+\frac{1}{2}+\alpha}+9h^{2}u_{2x,j}^{i+\frac{1}{2}+\alpha}+
       \frac{h^{6}}{30}[u_{6x}^{i+\frac{1}{2}+\alpha}(\epsilon_{j}^{4})+u_{6x}^{i+\frac{1}{2}+\alpha}(\epsilon_{j}^{5})-
       \end{equation*}
       \begin{equation*}
       18u_{6x}^{i+\frac{1}{2}+\alpha}(\epsilon_{j}^{6})-18u_{6x}^{i+\frac{1}{2}+\alpha}(\epsilon_{j}^{7})].
       \end{equation*}
        Solving this equation for $u_{2x,j}^{i+\frac{1}{2}+\alpha}$, we obtain
        \begin{equation*}
       u_{2x,j}^{i+\frac{1}{2}+\alpha}=\frac{1}{12h^{2}}\left[-u_{j+2}^{i+\frac{1}{2}+\alpha}+16u_{j+1}^{i+\frac{1}{2}+\alpha}-
      30u_{j}^{i+\frac{1}{2}+\alpha}+16u_{j-1}^{i+\frac{1}{2}+\alpha}-u_{j-2}^{i+\frac{1}{2}+\alpha}\right]+
       \end{equation*}
        \begin{equation}\label{31}
        \frac{h^{4}}{270}\left[18u_{6x}^{i+\frac{1}{2}+\alpha}(\epsilon_{j}^{6})+18u_{6x}^{i+\frac{1}{2}+\alpha}(\epsilon_{j}^{7})-
      u_{6x}^{i+\frac{1}{2}+\alpha}(\epsilon_{j}^{4})-u_{6x}^{i+\frac{1}{2}+\alpha}(\epsilon_{j}^{5})\right].
      \end{equation}
      In a similar way, one easily shows that
       \begin{equation*}
       u_{x,j}^{i+\frac{1}{2}+\alpha}=\frac{1}{12h}\left[-u_{j+2}^{i+\frac{1}{2}+\alpha}+8u_{j+1}^{i+\frac{1}{2}+\alpha}-
      8u_{j-1}^{i+\frac{1}{2}+\alpha}+u_{j-2}^{i+\frac{1}{2}+\alpha}\right]+
       \end{equation*}
        \begin{equation}\label{32}
        \frac{h^{4}}{180}\left[4u_{5x}^{i+\frac{1}{2}+\alpha}(\epsilon_{j}^{10})+4u_{5x}^{i+\frac{1}{2}+\alpha}(\epsilon_{j}^{11})-
      u_{5x}^{i+\frac{1}{2}+\alpha}(\epsilon_{j}^{8})-u_{5x}^{i+\frac{1}{2}+\alpha}(\epsilon_{j}^{9})\right],
      \end{equation}
      where
       \begin{equation}\label{33}
       \epsilon_{j}^{9}\in(x_{j-1},x_{j}),\text{\,\,}\epsilon_{j}^{8}\in(x_{j},x_{j+1}),\text{\,\,}\epsilon_{j}^{11}\in(x_{j-2},x_{j}),\text{\,\,}
      \epsilon_{j}^{10}\in(x_{j},x_{j+2}).
       \end{equation}
       On the other hand, the application of the Taylor series formulation for the function $u$ about the point $(x_{j},t_{i+\frac{1}{2}+\alpha})$ with
      time step $k$ results in
        \begin{equation}\label{28a}
       u_{j}^{i+\frac{1}{2}}=u_{j}^{i+\frac{1}{2}+\alpha}-\alpha ku_{t,j}^{i+\frac{1}{2}+\alpha}+\frac{1}{2}\alpha^{2}k^{2}u_{2t,j}(\epsilon_{i}^{4}),
       \text{\,\,}u_{j}^{i}=u_{j}^{i+\frac{1}{2}+\alpha}-(\frac{1}{2}+\alpha)ku_{t,j}^{i+\frac{1}{2}+\alpha}+\frac{1}{2}(\frac{1}{2}+\alpha)^{2}k^{2}u_{2t,j}
       (\epsilon_{i}^{5}),
       \end{equation}
       where $\epsilon_{i}^{4}\in(t_{i+\frac{1}{2}},t_{i+\frac{1}{2}+\alpha})$ and $\epsilon_{i}^{5}\in(t_{i},t_{i+\frac{1}{2}+\alpha})$. Combining
       both equations in $(\ref{28a})$, simple computations yield
        \begin{equation}\label{28c}
       u_{j}^{i+\frac{1}{2}+\alpha}=(1+2\alpha)u_{j}^{i+\frac{1}{2}}-2\alpha u_{j}^{i}-\alpha(\frac{1}{2}+\alpha)k^{2}H^{i}_{1,j},
       \end{equation}
       where
        \begin{equation}\label{28d}
       H^{i}_{1,j}=\alpha u_{2t,j}(\epsilon_{i}^{4})-(\frac{1}{2}+\alpha)u_{2t,j}(\epsilon_{i}^{5}).
       \end{equation}
       Setting $u_{j}^{\alpha_{i}}=(1+2\alpha)u_{j}^{i+\frac{1}{2}}-2\alpha u_{j}^{i}$, equation $(\ref{28c})$ becomes
       \begin{equation*}
        u_{j}^{i+\frac{1}{2}+\alpha}=u_{j}^{\alpha_{i}}-\alpha(\frac{1}{2}+\alpha)k^{2}H^{i}_{1,j}.
       \end{equation*}
       Utilizing this, equations $(\ref{31})$ and $(\ref{32})$ can be rewritten as
        \begin{equation}\label{40}
         u_{2x,j}^{i+\frac{1}{2}+\alpha}=\frac{1}{12h^{2}}\left[-u_{j+2}^{\alpha_{i}}+16u_{j+1}^{\alpha_{i}}-
      30u_{j}^{\alpha_{i}}+16u_{j-1}^{\alpha_{i}}-u_{j-2}^{\alpha_{i}}\right]+\psi_{1,j}^{i},
      \end{equation}
      and
       \begin{equation}\label{41}
         u_{x,j}^{i+\frac{1}{2}+\alpha}=\frac{1}{12h}\left[-u_{j+2}^{\alpha_{i}}+8u_{j+1}^{\alpha_{i}}-
      8u_{j-1}^{\alpha_{i}}+u_{j-2}^{\alpha_{i}}\right]+\psi_{2,j}^{i},
      \end{equation}
      where
       \begin{equation*}
       \psi_{1,j}^{i}=\frac{h^{4}}{270}\left[18u_{6x}^{i+\frac{1}{2}+\alpha}(\epsilon_{j}^{6})+18u_{6x}^{i+\frac{1}{2}+\alpha}(\epsilon_{j}^{7})-
      u_{6x}^{i+\frac{1}{2}+\alpha}(\epsilon_{j}^{4})-u_{6x}^{i+\frac{1}{2}+\alpha}(\epsilon_{j}^{5})\right]+
       \end{equation*}
        \begin{equation}\label{42}
       \frac{\alpha(\frac{1}{2}+\alpha)k^{2}}{12h^{2}}\left[H_{1,j+2}^{i}-16H_{1,j+1}^{i}+30H_{1,j}^{i}-16H_{1,j-1}^{i}+H_{1,j-2}^{i}\right],
      \end{equation}
      and
       \begin{equation*}
       \psi_{2,j}^{i}=\frac{h^{4}}{180}\left[4u_{5x}^{i+\frac{1}{2}+\alpha}(\epsilon_{j}^{10})+4u_{5x}^{i+\frac{1}{2}+\alpha}(\epsilon_{j}^{11})-
      u_{5x}^{i+\frac{1}{2}+\alpha}(\epsilon_{j}^{8})-u_{5x}^{i+\frac{1}{2}+\alpha}(\epsilon_{j}^{9})\right]+
       \end{equation*}
        \begin{equation}\label{43}
       \frac{\alpha(\frac{1}{2}+\alpha)k^{2}}{12h}\left[H_{1,j+2}^{i}-8H_{1,j+1}^{i}+8H_{1,j-1}^{i}-H_{1,j-2}^{i}\right].
      \end{equation}
      To develop the first-level of the proposed approach, we should combine equations $(\ref{1e})$, $(\ref{19})$, $(\ref{23})$ and
      $(\ref{40})$-$(\ref{43})$. This provides
      \begin{equation*}
      c\Delta_{0t}^{\lambda}u_{j}^{i+\frac{1}{2}+\alpha}+\frac{1}{\Gamma(1-\lambda)}\left\{
        \underset{l=0}{\overset{i-1}\sum}\int_{t_{l+\frac{1}{2}}}^{t_{l+\frac{3}{2}}}E^{l}_{\tau,j}(\tau)(t_{i+\frac{1}{2}
      +\alpha}-\tau)^{-\lambda}d\tau+\int_{0}^{t_{\frac{1}{2}}}\frac{u_{\tau,j}(\tau)-\delta_{t}u_{j}^{0}}{(t_{i+\frac{1}{2}
      +\alpha}-\tau)^{\lambda}}d\tau\right.
       \end{equation*}
       \begin{equation*}
      \left.+\int_{t_{i+\frac{1}{2}}}^{t_{i+\frac{1}{2}+\alpha}}(u_{\tau,j}(\tau)-\delta_{t}u_{j}^{i})(t_{i+\frac{1}{2}+\alpha}
      -\tau)^{-\lambda}d\tau\right\}=\frac{q^{i+\frac{1}{2}+\alpha}}{12h^{2}}\left[-u_{j+2}^{\alpha_{i}}+16u_{j+1}^{\alpha_{i}}-
      30u_{j}^{\alpha_{i}}+16u_{j-1}^{\alpha_{i}}-u_{j-2}^{\alpha_{i}}\right]-
       \end{equation*}
      \begin{equation*}
      \frac{p^{i+\frac{1}{2}+\alpha}}{12h}\left[-u_{j+2}^{\alpha_{i}}+8u_{j+1}^{\alpha_{i}}-8u_{j-1}^{\alpha_{i}}+u_{j-2}^{\alpha_{i}}\right]-
      g_{j}^{i+\frac{1}{2}+\alpha}u_{j}^{\alpha_{i}}+s_{j}^{i+\frac{1}{2}+\alpha}+q^{i+\frac{1}{2}+\alpha}\psi_{1,j}^{i}-p^{i+\frac{1}{2}
      +\alpha}\psi_{2,j}^{i}+
       \end{equation*}
       \begin{equation}\label{43a}
       \alpha(\frac{1}{2}+\alpha)k^{2}g^{i+\frac{1}{2}+\alpha}_{j}H_{1,j}^{i}.
       \end{equation}
       Suppose $U_{j}^{i}$ be the approximate solution at the grid point $(x_{j},t_{i})$. Tracking the error terms in both sides, $(\ref{43a})$ becomes
       \begin{equation*}
      c\Delta_{0t}^{\lambda}U_{j}^{i+\frac{1}{2}+\alpha}=\frac{q^{i+\frac{1}{2}+\alpha}}{12h^{2}}\left[-U_{j+2}^{\alpha_{i}}+16U_{j+1}^{\alpha_{i}}-
      30U_{j}^{\alpha_{i}}+16U_{j-1}^{\alpha_{i}}-U_{j-2}^{\alpha_{i}}\right]-
       \end{equation*}
       \begin{equation*}
      \frac{p^{i+\frac{1}{2}+\alpha}}{12h}\left[-U_{j+2}^{\alpha_{i}}+8U_{j+1}^{\alpha_{i}}-8U_{j-1}^{\alpha_{i}}+U_{j-2}^{\alpha_{i}}\right]-
      g_{j}^{i+\frac{1}{2}+\alpha}U_{j}^{\alpha_{i}}+s_{j}^{i+\frac{1}{2}+\alpha}.
       \end{equation*}
       Using equation $(\ref{21})$, expanding and rearranging terms, this approximation is equivalent to
       \begin{equation*}
       U_{j}^{i+\frac{1}{2}}=U_{j}^{i}-\frac{k}{2}\left(f^{\lambda\alpha}_{i+\frac{1}{2},i}+f^{\lambda\alpha}_{i+\frac{1}{2},i-1}\right)^{-1}
       \left\{\text{\,\.{f}}_{i+\frac{1}{2},0}^{\lambda\alpha}\delta_{t}u_{j}^{0}+(d^{\lambda\alpha}_{i+\frac{1}{2},i-1}
        -f^{\lambda\alpha}_{i+\frac{1}{2},i-1})\delta_{t}U_{j}^{i-\frac{1}{2}}+
       \underset{l=0}{\overset{i-2}\sum}\left[f^{\lambda\alpha}_{i+\frac{1}{2},l}\delta_{t}U_{j}^{l+1}+\right.\right.
       \end{equation*}
        \begin{equation*}
       \left.(d^{\lambda\alpha}_{i+\frac{1}{2},l}-f^{\lambda\alpha}_{i+\frac{1}{2},l})\delta_{t}U_{j}^{l+\frac{1}{2}}\right]-
       \frac{q^{i+\frac{1}{2}+\alpha}}{12h^{2}}\left[-U_{j+2}^{\alpha_{i}}+16U_{j+1}^{\alpha_{i}}-
      30U_{j}^{\alpha_{i}}+16U_{j-1}^{\alpha_{i}}-U_{j-2}^{\alpha_{i}}\right]+
       \end{equation*}
      \begin{equation}\label{44}
      \left. \frac{p^{i+\frac{1}{2}+\alpha}}{12h}\left[-U_{j+2}^{\alpha_{i}}+8U_{j+1}^{\alpha_{i}}-8U_{j-1}^{\alpha_{i}}+U_{j-2}^{\alpha_{i}}\right]+
      g_{j}^{i+\frac{1}{2}+\alpha}U_{j}^{\alpha_{i}}-s_{j}^{i+\frac{1}{2}+\alpha}\right\},\text{\,\,\,\,for\,\,\,\,}i\geq1.
       \end{equation}
       Here the sum equals zero is the upper summation index is less than the lower one.\\

       For $i=0,$ combining equations $(\ref{1e})$, $(\ref{18})$, $(\ref{23})$ and $(\ref{40})$-$(\ref{43})$, it is not difficult
     to observe that
       \begin{equation*}
       f_{\frac{1}{2},0}^{\lambda\alpha}\delta_{t}u_{j}^{0}+\frac{1}{\Gamma(1-\lambda)}\int_{0}^{t_{\frac{1}{2}
      +\alpha}}\frac{u_{\tau,j}(\tau)-\delta_{t}u_{j}^{0}}{(t_{\frac{1}{2}+\alpha}-\tau)^{\lambda}}d\tau=\frac{q^{\frac{1}{2}+\alpha}}{12h^{2}}
      \left[-u_{j+2}^{\alpha_{0}}+16u_{j+1}^{\alpha_{0}}-30u_{j}^{\alpha_{0}}+16u_{j-1}^{\alpha_{0}}-u_{j-2}^{\alpha_{0}}\right]-
       \end{equation*}
       \begin{equation}\label{44a}
      \frac{p^{\frac{1}{2}+\alpha}}{12h}\left[-u_{j+2}^{\alpha_{0}}+8u_{j+1}^{\alpha_{0}}-8u_{j-1}^{\alpha_{0}}+u_{j-2}^{\alpha_{0}}\right]-
      g_{j}^{\frac{1}{2}+\alpha}u_{j}^{\alpha_{0}}+s_{j}^{\frac{1}{2}+\alpha}+q^{\frac{1}{2}+\alpha}\psi_{1,j}^{0}-p^{\frac{1}{2}+\alpha}\psi_{2,j}^{0}+
      \alpha(\frac{1}{2}+\alpha)k^{2}g^{\frac{1}{2}+\alpha}_{j}H_{1,j}^{0}.
       \end{equation}
       Omitting the error terms in both sides of equation $(\ref{44a})$ to obtain
        \begin{equation*}
       f_{\frac{1}{2},0}^{\lambda\alpha}\delta_{t}U_{j}^{0}=\frac{q^{\frac{1}{2}+\alpha}}{12h^{2}}
      \left[-U_{j+2}^{\alpha_{0}}+16U_{j+1}^{\alpha_{0}}-30U_{j}^{\alpha_{0}}+16U_{j-1}^{\alpha_{0}}-U_{j-2}^{\alpha_{0}}\right]-
       \end{equation*}
       \begin{equation}\label{44aa}
      \frac{p^{\frac{1}{2}+\alpha}}{12h}\left[-U_{j+2}^{\alpha_{0}}+8U_{j+1}^{\alpha_{0}}-8U_{j-1}^{\alpha_{0}}+U_{j-2}^{\alpha_{0}}\right]-
      g_{j}^{\frac{1}{2}+\alpha}U_{j}^{\alpha_{0}}+s_{j}^{\frac{1}{2}+\alpha}.
       \end{equation}
       Expanding and rearranging terms, it is easy to see that
        \begin{equation*}
       U_{j}^{\frac{1}{2}}=U_{j}^{0}+\frac{k}{2}\left(f_{\frac{1}{2},0}^{\lambda\alpha}\right)^{-1}\left\{\frac{q^{\frac{1}{2}+\alpha}}{12h^{2}}
      \left[-U_{j+2}^{\alpha_{0}}+16U_{j+1}^{\alpha_{0}}-30U_{j}^{\alpha_{0}}+16U_{j-1}^{\alpha_{0}}-U_{j-2}^{\alpha_{0}}\right]-\right.
       \end{equation*}
       \begin{equation}\label{46}
      \left.\frac{p^{\frac{1}{2}+\alpha}}{12h}\left[-U_{j+2}^{\alpha_{0}}+8U_{j+1}^{\alpha_{0}}-8U_{j-1}^{\alpha_{0}}+U_{j-2}^{\alpha_{0}}\right]-
      g_{j}^{\frac{1}{2}+\alpha}U_{j}^{\alpha_{0}}+s_{j}^{\frac{1}{2}+\alpha}\right\}.
       \end{equation}
      It worth noticing to mention that equation $(\ref{44})$ defines the first-level of the new method subject to the appropriate initial condition
      $(\ref{46})$.\\

      Now, we should describe the second step of the two-level formulation for solving the considered time-fractional convection-diffusion-reaction
     equation.\\

       Considering equation $(\ref{2})$, it is not hard to see that
       \begin{equation*}
        cD_{0t}^{\lambda}u_{j}^{i+1+\alpha}=\frac{1}{\Gamma(1-\lambda)}\int_{0}^{t_{i+1+\alpha}}u_{\tau,j}(\tau)
      (t_{i+1+\alpha}-\tau)^{-\lambda}d\tau=\frac{1}{\Gamma(1-\lambda)}\underset{l=0}{\overset{i}\sum}\int_{t_{l}}^{t_{l+1
      }}u_{\tau,j}(\tau)(t_{i+1+\alpha}-\tau)^{-\lambda}d\tau
       \end{equation*}
       \begin{equation}\label{47}
      \left.+\int_{t_{i+1}}^{t_{i+1+\alpha}}u_{\tau,j}(\tau)(t_{i+1+\alpha}-\tau)^{-\lambda}d\tau\right].
       \end{equation}

       Let $\widehat{P}_{2}^{u_{j},l}$ be the quadratic polynomial interpolating the function $u_{j}(t)$ at the points $(t_{l},u_{j}^{l}),$
      $(t_{l+\frac{1}{2}},u_{j}^{l+\frac{1}{2}})$ and $(t_{l+1},u_{j}^{l+1})$. Direct computations result in
       \begin{equation}\label{47a}
        \widehat{P}_{2}^{u_{j},l}(t)=\frac{2}{k^{2}}\left[(t-t_{l+\frac{1}{2}})(t-t_{l+1})u_{j}^{l}-2(t-t_{l})(t-t_{l+1})u_{j}^{l+\frac{1}{2}}
        +(t-t_{l})(t-t_{l+\frac{1}{2}})u_{j}^{l+1}\right].
       \end{equation}
       The corresponding error is given by
       \begin{equation}\label{48}
        \widehat{E}^{l}_{j}(t)=u_{j}(t)-\widehat{P}_{2}^{u_{j},l}(t)=\frac{1}{6}(t-t_{l})(t-t_{l+\frac{1}{2}})(t-t_{l+1})u_{3t,j}(\epsilon_{l}^{13}),
       \end{equation}
       where $\epsilon_{l}^{13}$ is between the minimum and maximum of $t_{l}$, $t_{l+\frac{1}{2}}$, $t_{l+1}$ and $t$. Furthermore
       \begin{equation}\label{49}
        \widehat{P}_{2,t}^{u_{j},l}(t)=\frac{2}{k^{2}}\left[2(u_{j}^{l}-2u_{j}^{l+\frac{1}{2}}+u_{j}^{l+1})t-(t_{l+\frac{1}{2}}+t_{l+1})u_{j}^{l}+
        2(t_{l}+t_{l+1})u_{j}^{l+\frac{1}{2}}-(t_{l}+t_{l+\frac{1}{2}})u_{j}^{l+1}\right].
       \end{equation}
       Setting
        \begin{equation}\label{50}
       J_{j}^{(i,\alpha)}=\frac{1}{\Gamma(1-\lambda)}\left[\int_{t_{i+1}}^{t_{i+1+\alpha}}u_{\tau,j}(\tau)(t_{i+1+\alpha}-\tau)^{-\lambda}d\tau+
       \underset{l=0}{\overset{i}\sum}\int_{t_{l}}^{t_{l+1}}\widehat{E}^{l}_{\tau,j}(\tau)(t_{i+1+\alpha}-\tau)^{-\lambda}d\tau\right].
       \end{equation}
       Combining equations $(\ref{47})$-$(\ref{50})$, simple calculations give
       \begin{equation*}
        cD_{0t}^{\lambda}u_{j}^{i+1+\alpha}=\frac{1}{\Gamma(1-\lambda)}\left[\underset{l=0}{\overset{i}\sum}\int_{t_{l}}^{t_{l+1}}
        \widehat{P}_{2,\tau}^{u_{j},l}(\tau)(t_{i+1+\alpha}-\tau)^{-\lambda}d\tau+\delta_{t}u_{j}^{l+\frac{1}{2}}\int_{t_{i+1}}^{t_{i+1+\alpha}}
        (t_{i+1+\alpha}-\tau)^{-\lambda}d\tau\right]+J_{j}^{(i,\alpha)}
       \end{equation*}
       \begin{equation*}
        =\frac{k^{1-\lambda}}{\Gamma(2-\lambda)}\underset{l=0}{\overset{i}\sum}\left[2\left(\delta_{t}u_{j}^{l+\frac{1}{2}}-\delta_{t}u_{j}^{l}\right)
        \left(l(i+1+\alpha-l)^{1-\lambda}-(l+1)(i+\alpha-l)^{1-\lambda}+\frac{1}{2-\lambda}(i+1+\alpha-l)^{2-\lambda}\right.\right.
       \end{equation*}
       \begin{equation*}
        \left.\left.-\frac{1}{2-\lambda}(i+\alpha-l)^{2-\lambda}\right)+\left((2l+\frac{1}{2})\delta_{t}u_{j}^{l+\frac{1}{2}}-(2l+\frac{3}{2})
        \delta_{1}u_{j}^{l}\right)\left((i+\alpha-l)^{1-\lambda}-(i+1+\alpha-l)^{1-\lambda}\right)\right]+J_{j}^{(i,\alpha)}
       \end{equation*}
       \begin{equation}\label{51}
        =\underset{l=0}{\overset{i}\sum}\left[f^{\lambda\alpha}_{i+1,l}\delta_{t}u_{j}^{l+\frac{1}{2}}+(d^{\lambda\alpha}_{i+1,l}
        -f^{\lambda\alpha}_{i+1,l})\delta_{t}u_{j}^{l}\right]+f^{\lambda\alpha}_{i+1,i+1}\delta_{t}u_{j}^{i+\frac{1}{2}}+J_{j}^{(i,\alpha)}=
        c\Delta_{0t}^{\lambda}u^{i+1+\alpha}_{j}+J_{j}^{(i,\alpha)},
       \end{equation}
       where
        \begin{equation}\label{51a}
        c\Delta_{0t}^{\lambda}u^{i+1+\alpha}_{j}=\underset{l=0}{\overset{i}\sum}\left[f^{\lambda\alpha}_{i+1,l}\delta_{t}u_{j}^{l+\frac{1}{2}}
        +(d^{\lambda\alpha}_{i+1,l}-f^{\lambda\alpha}_{i+1,l})\delta_{t}u_{j}^{l}\right]+f^{\lambda\alpha}_{i+1,i+1}\delta_{t}u_{j}^{i+\frac{1}{2}},
       \end{equation}
       \begin{equation}\label{52}
        d^{\lambda\alpha}_{i+1,l}=\frac{k^{1-\lambda}}{\Gamma(2-\lambda)}\widetilde{d}^{\lambda\alpha}_{i+1,l},\text{\,\,}f^{\lambda\alpha}_{i+1,l}=
        \frac{k^{1-\lambda}}{\Gamma(2-\lambda)}\widetilde{f}^{\lambda\alpha}_{i+1,l},\text{\,\,for\,\,}0\leq l\leq i,\text{\,\,\,}
        f^{\lambda\alpha}_{i+1,i+1}=\frac{k^{1-\lambda}}{\Gamma(2-\lambda)}\widetilde{f}^{\lambda\alpha}_{i+1,i+1},
       \end{equation}
       where
       \begin{equation*}
        \widetilde{d}^{\lambda\alpha}_{i+1,l}=(i+1+\alpha-l)^{1-\lambda}-(i+\alpha-l)^{1-\lambda},\text{\,\,\,}
        \widetilde{f}^{\lambda\alpha}_{i+1,i+1}=\alpha^{1-\lambda},
       \end{equation*}
       \begin{equation}\label{53}
        \widetilde{f}^{\lambda\alpha}_{i+1,l}=\frac{2}{2-\lambda}\left[(i+1+\alpha-l)^{2-\lambda}-
        (i+\alpha-l)^{2-\lambda}\right]-\frac{1}{2}[(i+1+\alpha-l)^{1-\lambda}+3(i+\alpha-l)^{1-\lambda}],
       \end{equation}
       for $0\leq l\leq i,$ with $i\geq0.$ Applying the Taylor approximation, it is not difficult to show that
       \begin{equation}\label{55}
       u_{j}^{i+1+\alpha}=(1+2\alpha)u_{j}^{i+1}-2\alpha u_{j}^{i+\frac{1}{2}}-\alpha(\frac{1}{2}+\alpha)k^{2}H^{i}_{2,j},
       \end{equation}
       where
        \begin{equation}\label{56}
       H^{i}_{2,j}=\alpha u_{2t,j}(\epsilon_{6}^{i})-(\frac{1}{2}+\alpha)u_{2t,j}(\epsilon_{7}^{i}),
       \end{equation}
        where $\epsilon_{6}^{i}\in(t_{i+1},t_{i+1+\alpha})$ and $\epsilon_{7}^{i}\in(t_{i+\frac{1}{2}},t_{i+1+\alpha})$. Replacing
       $t_{i+\frac{1}{2}+\alpha}$ by $t_{i+1+\alpha}$ in equation $(\ref{40})$-$(\ref{41})$ to obtain
        \begin{equation}\label{57}
         u_{2x,j}^{i+1+\alpha}=\frac{1}{12h^{2}}\left[-u_{j+2}^{\theta_{i}}+16u_{j+1}^{\theta_{i}}-30u_{j}^{\theta_{i}}+16u_{j-1}^{\theta_{i}}
         -u_{j-2}^{\theta_{i}}\right]+\psi_{3,j}^{i},
      \end{equation}
      and
       \begin{equation}\label{58}
         u_{x,j}^{i+1+\alpha}=\frac{1}{12h}\left[-u_{j+2}^{\theta_{i}}+8u_{j+1}^{\theta_{i}}-8u_{j-1}^{\theta_{i}}+u_{j-2}^{\theta_{i}}\right]
         +\psi_{4,j}^{i},
      \end{equation}
      where we set
      \begin{equation}\label{59}
      u_{j}^{\theta_{i}}=(1+2\alpha)u_{j}^{i+1}-2\alpha u_{j}^{i+\frac{1}{2}},
      \end{equation}
       \begin{equation*}
       \psi_{3,j}^{i}=\frac{h^{4}}{270}\left[18u_{6x}^{i+1+\alpha}(\epsilon_{j}^{12})+18u_{6x}^{i+1+\alpha}(\epsilon_{j}^{13})-
      u_{6x}^{i+1+\alpha}(\epsilon_{j}^{14})-u_{6x}^{i+1+\alpha}(\epsilon_{j}^{15})\right]+
       \end{equation*}
        \begin{equation}\label{60}
       \frac{\alpha(\frac{1}{2}+\alpha)k^{2}}{12h^{2}}\left[H_{2,j+2}^{i}-16H_{2,j+1}^{i}+30H_{2,j}^{i}-16H_{2,j-1}^{i}+H_{2,j-2}^{i}\right],
      \end{equation}
      and
       \begin{equation*}
       \psi_{4,j}^{i}=\frac{h^{4}}{180}\left[4u_{5x}^{i+1+\alpha}(\epsilon_{j}^{18})+4u_{5x}^{i+1+\alpha}(\epsilon_{j}^{19})-
      u_{5x}^{i+1+\alpha}(\epsilon_{j}^{16})-u_{5x}^{i+1+\alpha}(\epsilon_{j}^{17})\right]+
       \end{equation*}
        \begin{equation}\label{61}
       \frac{\alpha(\frac{1}{2}+\alpha)k^{2}}{12h}\left[H_{2,j+2}^{i}-8H_{2,j+1}^{i}+8H_{2,j-1}^{i}-H_{2,j-2}^{i}\right],
      \end{equation}
      where $\epsilon_{j}^{8},\epsilon_{j}^{14},\epsilon_{j}^{16}\in(x_{j-1},x_{j})$, $\epsilon_{j}^{9},\epsilon_{j}^{15},\epsilon_{j}^{17}
      \in(x_{j},x_{j+1})$, $\epsilon_{j}^{10},\epsilon_{j}^{12},\epsilon_{j}^{18}\in(x_{j-2},x_{j})$ and
       $\epsilon_{j}^{11},\epsilon_{j}^{13},\epsilon_{j}^{19}\in(x_{j},x_{j+2})$. Here the terms $H_{2,m}^{i}$ are defined as in equation $(\ref{56})$.\\

      Combining equations $(\ref{1e})$, $(\ref{23})$, $(\ref{51})$, $(\ref{55})$, $(\ref{57})$ and $(\ref{58})$, direct computations result in

       \begin{equation*}
      c\Delta_{0t}^{\lambda}u^{i+1+\alpha}_{j}+J_{j}^{(i,\alpha)}=\frac{q^{i+1+\alpha}}{12h^{2}}\left[-u_{j+2}^{\theta_{i}}+16u_{j+1}^{\theta_{i}}-
      30u_{j}^{\theta_{i}}+16u_{j-1}^{\theta_{i}}-u_{j-2}^{\theta_{i}}\right]-\frac{p^{i+1+\alpha}}{12h}\left[-u_{j+2}^{\theta_{i}}+8u_{j+1}^{\theta_{i}}
      \right.
       \end{equation*}
      \begin{equation}\label{62}
      \left.-8u_{j-1}^{\theta_{i}}+u_{j-2}^{\theta_{i}}\right]-g_{j}^{i+1+\alpha}u_{j}^{\theta_{i}}+s_{j}^{i+1+\alpha}+q^{i+1+\alpha}\psi_{3,j}^{i}
      -p^{i+1+\alpha}\psi_{4,j}^{i}+\alpha(\frac{1}{2}+\alpha)k^{2}g^{i+1+\alpha}_{j}H_{2,j}^{i}.
       \end{equation}
       Tracking the error terms $J_{j}^{(i,\alpha)}$, $q^{i+1+\alpha}\psi_{3,j}^{i}$, $p^{i+1+\alpha}\psi_{4,j}^{i}$ and
       $\alpha(\frac{1}{2}+\alpha)k^{2}g^{i+1+\alpha}_{j}H_{2,j}^{i},$ plugging equations $(\ref{51a})$ and $(\ref{62})$ and rearranging terms, this yields
       \begin{equation*}
       U_{j}^{i+1}=U_{j}^{i+\frac{1}{2}}-\frac{k}{2}\left(f^{\lambda\alpha}_{i+1,i}+f^{\lambda\alpha}_{i+1,i+1}\right)^{-1}
       \left\{(d^{\lambda\alpha}_{i+1,i}-f^{\lambda\alpha}_{i+1,i})\delta_{t}U_{j}^{i}+
       \underset{l=0}{\overset{i-1}\sum}\left[f^{\lambda\alpha}_{i+1,l}\delta_{t}U_{j}^{l+\frac{1}{2}}+
       (d^{\lambda\alpha}_{i+1,l}-f^{\lambda\alpha}_{i+1,l})\delta_{t}U_{j}^{l}\right]-\right.
       \end{equation*}
        \begin{equation*}
      \frac{q^{i+1+\alpha}}{12h^{2}}\left[-U_{j+2}^{\theta_{i}}+16U_{j+1}^{\theta_{i}}-30U_{j}^{\theta_{i}}+
      16U_{j-1}^{\theta_{i}}-U_{j-2}^{\theta_{i}}\right]+
       \end{equation*}
      \begin{equation}\label{63}
       \left. \frac{p^{i+1+\alpha}}{12h}\left[-U_{j+2}^{\theta_{i}}+8U_{j+1}^{\theta_{i}}-8U_{j-1}^{\theta_{i}}+U_{j-2}^{\theta_{i}}\right]+
      g_{j}^{i+1+\alpha}U_{j}^{\theta_{i}}-s_{j}^{i+1+\alpha}\right\},\text{\,\,\,\,for\,\,\,\,}i\geq1.
       \end{equation}
       We recall that $U_{m}^{\theta_{i}}=(1+2\alpha)U_{m}^{i+1}-2\alpha U_{m}^{i+\frac{1}{2}}$, for $i\geq0$. Equation $(\ref{63})$ denotes the
      second-level of the proposed procedure applied to the time-fractional parabolic equations $(\ref{1e})$-$(\ref{3e})$.\\

       Now, utilizing $(\ref{63})$ and assuming that the sum equals zero when the upper summation index is less than the lower one, we obtain
        \begin{equation*}
       U_{j}^{1}=U_{j}^{\frac{1}{2}}-\frac{k}{2}\left(f^{\lambda\alpha}_{1,0}+f^{\lambda\alpha}_{1,1}\right)^{-1}
       \left\{(d^{\lambda\alpha}_{1,0}-f^{\lambda\alpha}_{1,0})\delta_{t}U_{j}^{0}-\frac{q^{1+\alpha}}{12h^{2}}\left[-U_{j+2}^{\theta_{0}}
       +16U_{j+1}^{\theta_{0}}-30U_{j}^{\theta_{0}}+16U_{j-1}^{\theta_{0}}-U_{j-2}^{\theta_{0}}\right]+\right.
       \end{equation*}
      \begin{equation}\label{64n}
       \left. \frac{p^{1+\alpha}}{12h}\left[-U_{j+2}^{\theta_{0}}+8U_{j+1}^{\theta_{0}}-8U_{j-1}^{\theta_{0}}+U_{j-2}^{\theta_{0}}\right]+
      g_{j}^{1+\alpha}U_{j}^{\theta_{0}}-s_{j}^{1+\alpha}\right\}.
       \end{equation}
       An assembly of equations $(\ref{46})$, $(\ref{44})$, $(\ref{64n})$ and $(\ref{63})$ after rearranging terms provides a two-level
      fourth-order approach for solving the initial-boundary value problem $(\ref{1e})$-$(\ref{3e})$. That is, for $i=1,2,...,N-1,$ and $j=2,3,...,M-2,$
      \begin{equation*}
       \frac{(1+2\alpha)k}{24h^{2}}\left[\left(q^{\frac{1}{2}+\alpha}+hp^{\frac{1}{2}+\alpha}\right)U_{j-2}^{\alpha_{0}}-\left(16q^{\frac{1}{2}+\alpha}
     +8hp^{\frac{1}{2}+\alpha}\right)U_{j-1}^{\alpha_{0}}-\left(16q^{\frac{1}{2}+\alpha}-8hp^{\frac{1}{2}+\alpha}\right)U_{j+1}^{\alpha_{0}}\right.
       \end{equation*}
     \begin{equation}\label{s1}
       \left.+\left(q^{\frac{1}{2}+\alpha}-hp^{\frac{1}{2}+\alpha}\right)U_{j+2}^{\alpha_{0}}\right]+\left[f^{\lambda\alpha}_{\frac{1}{2},0}+\frac{(1+2\alpha)k}{24h^{2}}
       \left(30q^{\frac{1}{2}+\alpha}+12h^{2}g_{j}^{\frac{1}{2}+\alpha}\right)\right]U_{j}^{\alpha_{0}}=f^{\lambda\alpha}_{\frac{1}{2},0}U_{j}^{0}+
       \frac{(1+2\alpha)k}{2}s_{j}^{\frac{1}{2}+\alpha},
       \end{equation}
       where $U_{l}^{\alpha_{0}}=(1+2\alpha)U_{l}^{\frac{1}{2}}-2\alpha U_{l}^{0}$, for $l=2,3,...,M-2$,
        \begin{equation*}
       \frac{(1+2\alpha)k}{24h^{2}}\left[
      \left(q^{1+\alpha}+hp^{1+\alpha}\right)U_{j-2}^{\theta_{0}}-\left(16q^{1+\alpha}+8hp^{1+\alpha}\right)U_{j-1}^{\theta_{0}}-
      \left(16q^{1+\alpha}-8hp^{1+\alpha}\right)U_{j+1}^{\theta_{0}}+\right.
       \end{equation*}
     \begin{equation*}
       \left.\left(q^{1+\alpha}-hp^{1+\alpha}\right)U_{j+2}^{\theta_{0}}\right]+\left[\left(f^{\lambda\alpha}_{1,1}+f^{\lambda\alpha}_{1,0}\right)
       +\frac{(1+2\alpha)k}{24h^{2}}\left(30q^{1+\alpha}+12h^{2}g_{j}^{1+\alpha}\right)\right]U_{j}^{\theta_{0}}=
       \end{equation*}
       \begin{equation}\label{s2}
        \left(f^{\lambda\alpha}_{1,1}+f^{\lambda\alpha}_{1,0}\right)U_{j}^{\frac{1}{2}}-
       \frac{(1+2\alpha)k}{2}\left[(d^{\lambda\alpha}_{1,0}-f^{\lambda\alpha}_{1,0})\delta_{t}U_{j}^{0}-s_{j}^{1+\alpha}\right],
       \end{equation}
      where $U_{r}^{\theta_{0}}=(1+2\alpha)U_{r}^{1}-2\alpha U_{r}^{\frac{1}{2}}$, for $r=2,3,...,M-2$,
      \begin{equation*}
       \frac{(1+2\alpha)k}{24h^{2}}\left[\left(q^{i+\frac{1}{2}+\alpha}+hp^{i+\frac{1}{2}+\alpha}\right)U_{j-2}^{\alpha_{i}}-\left(16q^{i+\frac{1}{2}+\alpha}
       +8hp^{i+\frac{1}{2}+\alpha}\right)U_{j-1}^{\alpha_{i}}-\right.
       \end{equation*}
     \begin{equation*}
       \left.\left(16q^{i+\frac{1}{2}+\alpha}-8hp^{i+\frac{1}{2}+\alpha}\right)U_{j+1}^{\alpha_{i}}+\left(q^{i+\frac{1}{2}+\alpha}-hp^{i+\frac{1}{2}+\alpha}
     \right)U_{j+2}^{\alpha_{i}}\right]+\left[\left(f^{\lambda\alpha}_{i+\frac{1}{2},i}+f^{\lambda\alpha}_{i+\frac{1}{2},i-1}\right)+\right.
       \end{equation*}
       \begin{equation*}
       \left.\frac{(1+2\alpha)k}{24h^{2}}\left(30q^{i+\frac{1}{2}+\alpha}+12h^{2}g_{j}^{i+\frac{1}{2}+\alpha}\right)\right]U_{j}^{\alpha_{i}}=
      \left(f^{\lambda\alpha}_{i+\frac{1}{2},i}+f^{\lambda\alpha}_{i+\frac{1}{2},i-1}\right)U_{j}^{i}-\frac{(1+2\alpha)k}{2}
     \left\{\text{\,\.{f}}_{i+\frac{1}{2},0}^{\lambda\alpha}\delta_{t}u_{j}^{0}\right.
       \end{equation*}
        \begin{equation}\label{s3}
        \left.+(d^{\lambda\alpha}_{i+\frac{1}{2},i-1}-f^{\lambda\alpha}_{i+\frac{1}{2},i-1})\delta_{t}U_{j}^{i-\frac{1}{2}}+\underset{l=0}{\overset{i-2}\sum}
       \left[f^{\lambda\alpha}_{i+\frac{1}{2},l}\delta_{t}U_{j}^{l+1}+(d^{\lambda\alpha}_{i+\frac{1}{2},l}-f^{\lambda\alpha}_{i+\frac{1}{2},l})
      \delta_{t}U_{j}^{l+\frac{1}{2}}\right]-s_{j}^{i+\frac{1}{2}+\alpha}\right\},
       \end{equation}
       where $U_{m}^{\alpha_{i}}=(1+2\alpha)U_{m}^{i+\frac{1}{2}}-2\alpha U_{m}^{i}$, for $m=2,3,...,M-2$,
        \begin{equation*}
       \frac{(1+2\alpha)k}{24h^{2}}\left[\left(q^{i+1+\alpha}+hp^{i+1+\alpha}\right)U_{j-2}^{\theta_{i}}-\left(16q^{i+1+\alpha}+
      8hp^{i+1+\alpha}\right)U_{j-1}^{\theta_{i}}-\left(16q^{i+1+\alpha}-8hp^{i+1+\alpha}\right)U_{j+1}^{\theta_{i}}\right.
       \end{equation*}
     \begin{equation*}
       \left.+\left(q^{i+1+\alpha}-hp^{i+1+\alpha}\right)U_{j+2}^{\theta_{i}}\right]+
      \left[\left(f^{\lambda\alpha}_{i+1,i+1}+f^{\lambda\alpha}_{i+1,i}\right)+\frac{(1+2\alpha)k}{24h^{2}}\left(30q^{i+1+\alpha}
      +12h^{2}g_{j}^{i+1+\alpha}\right)\right]U_{j}^{\theta_{i}}
       \end{equation*}
       \begin{equation*}
       =\left(f^{\lambda\alpha}_{i+1,i+1}+f^{\lambda\alpha}_{i+1,i}\right)U_{j}^{i+\frac{1}{2}}-\frac{(1+2\alpha)k}{2}
     \left\{(d^{\lambda\alpha}_{i+1,i}-f^{\lambda\alpha}_{i+1,i})\delta_{t}U_{j}^{i}\right.
       \end{equation*}
        \begin{equation}\label{s4}
        \left.+\underset{l=0}{\overset{i-1}\sum}\left[f^{\lambda\alpha}_{i+1,l}\delta_{t}U_{j}^{l+\frac{1}{2}}+
      (d^{\lambda\alpha}_{i+1,l}-f^{\lambda\alpha}_{i+1,l})\delta_{t}U_{j}^{l}\right]-s_{j}^{i+1+\alpha}\right\},
       \end{equation}
      $U_{m}^{\theta_{i}}=(1+2\alpha)U_{m}^{i+1}-2\alpha U_{m}^{i+\frac{1}{2}}$, for $m=2,3,...,M-2$. With initial and boundary conditions
       \begin{equation}\label{s5}
       U_{j}^{0}=\psi_{1,j},\text{\,\,}j=0,1,...,M;\text{\,\,and\,\,}U_{r}^{\alpha_{i}}=\psi_{2,r}^{\alpha_{i}},\text{\,\,}
       U_{r}^{\theta_{i}}=\psi_{2,r}^{\theta_{i}},\text{\,\,\,for\,\,\,}i=0,1,...,N-1;\text{\,\,}r\in\{0,\text{\,}1,\text{\,}M-1,\text{\,}M\}.
       \end{equation}
     We recall that the sum equals zero if the upper summation index is less than the lower ones.

     \section{Stability analysis and convergence rate of the new algorithm $(\ref{s1})$-$(\ref{s5})$}\label{sec3}
    This section considers the analysis of stability and convergence rate of the two-level fourth-order method $(\ref{s1})$-$(\ref{s5})$ applied to
    the time-fractional convection-diffusion-reaction equation $(\ref{1e})$ subjects to initial and boundary conditions $(\ref{2e})$ and $(\ref{3e})$,
    respectively. In this study we assume that the parameters $\lambda$ and $\alpha$ satisfy $0<\lambda<\frac{2}{3}$ and $\alpha=1-\lambda$.
    This requirement plays a crucial role in the proof of Lemma $(\ref{l6})$ and Theorem $(\ref{tt1})$. Firstly, it is worth noticing to mention that 
    the stability analysis and convergence rate of the proposed approach requires some intermediate results (namely Lemmas $\ref{l1}$-$\ref{l9}$).

    \begin{lemma}\label{l1}
   For any $\lambda\in(0,1)$ and $\alpha=1-\lambda$, suppose the function $u(\cdot,\cdot)\in\mathcal{C}^{6,3}(D):=\mathcal{C}^{6,3}_{D}$,
   where $D=[0,L_{1}]\times[0,T]$, so
   \begin{equation}\label{1l}
   \underset{0\leq i\leq N-1}{\max}\|cD_{0t}^{\lambda}u^{i+r+\alpha}_{j}-c\Delta_{0t}^{\lambda}u^{i+r+\alpha}_{j}\|_{L^{2}}\leq C_{r}k^{2-\lambda}
   \end{equation}
   where $r\in\{\frac{1}{2},1\}$. $cD_{0t}^{\lambda}u^{i+\frac{1}{2}+\alpha}_{j}$, (for $i=0$ and $1\leq i\leq N-1$) are given by equations $(\ref{18})$
   and $(\ref{19})$, respectively; $c\Delta_{0t}^{\lambda}u^{i+\frac{1}{2}+\alpha}_{j}$, (for $i=0$ and $1\leq i\leq N-1$) are defined by equations
    $(\ref{20})$ and $(\ref{21})$, respectively. $cD_{0t}^{\lambda}u^{i+1+\alpha}_{j}$ and $c\Delta_{0t}^{\lambda}u^{i+1+\alpha}_{j}$, (for $0\leq i\leq
    N-1$) are given by equations $(\ref{51})$ and $(\ref{52})$, respectively. $C_{r}$ are positive constants which do not depend on the time step $k$ and
    space step $h$.
   \end{lemma}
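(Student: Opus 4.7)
The plan is to decompose the truncation error
$$e_j^{i+r+\alpha} := cD_{0t}^{\lambda}u_j^{i+r+\alpha} - c\Delta_{0t}^{\lambda}u_j^{i+r+\alpha}$$
according to the integral representations in $(\ref{18})$--$(\ref{21})$ (for $r=1/2$) and $(\ref{51})$--$(\ref{51a})$ (for $r=1$), bound each piece using smoothness of $u$, kernel integrability, and a telescoping identity, and finally pass to the discrete $L^2$ norm.

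For $r=1/2$ and $i=0$, the difference reduces to a single integral over $[0,t_{1/2+\alpha}]$ with integrand $u_{\tau,j}(\tau)-P_{1,\tau}^{u_j,0}(\tau)$. Using $u\in\mathcal{C}^{6,3}_D$, the standard Peano-form remainder bound for the derivative of a linear interpolant gives $|u_{\tau,j}(\tau)-P_{1,\tau}^{u_j,0}(\tau)|\le Ck\,\|u_{2t,j}\|_\infty$, and the kernel $(t_{1/2+\alpha}-\tau)^{-\lambda}$ integrates to $O(k^{1-\lambda})$, producing $|e_j^{1/2+\alpha}|=O(k^{2-\lambda})$.

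For $r=1/2$ and $i\ge 1$, split $e_j^{i+1/2+\alpha}$ as written in $(\ref{19})$ into three contributions: (I) interior quadratic-interpolation errors $\sum_{l=0}^{i-1}\int_{t_{l+1/2}}^{t_{l+3/2}}E^l_{\tau,j}(\tau)(t_{i+1/2+\alpha}-\tau)^{-\lambda}d\tau$ stemming from $(\ref{4a})$, (II) the initial boundary integral on $[0,t_{1/2}]$, and (III) the terminal boundary integral on $[t_{i+1/2},t_{i+1/2+\alpha}]$. For piece (I), the Peano-form remainder bound yields $|E_{\tau,j}^l(\tau)|\le Ck^2\,\|u_{3t,j}\|_\infty$ uniformly on each subinterval; pulling this out and using the telescoping identity
\[
\sum_{l=0}^{i-1}\int_{t_{l+1/2}}^{t_{l+3/2}}(t_{i+1/2+\alpha}-\tau)^{-\lambda}\,d\tau=\frac{1}{1-\lambda}\bigl[t_{i+\alpha}^{\,1-\lambda}-(\alpha k)^{1-\lambda}\bigr]\le\frac{T^{1-\lambda}}{1-\lambda},
\]
bounds (I) by $Ck^2$, hence by $O(k^{2-\lambda})$. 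For (II), Taylor expansion around $t_{1/4}$ gives $|u_{\tau,j}(\tau)-\delta_t u_j^0|\le Ck\,\|u_{2t,j}\|_\infty$ for $\tau\in[0,t_{1/2}]$, while $(t_{i+1/2+\alpha}-\tau)^{-\lambda}\le t_i^{-\lambda}\le k^{-\lambda}$ since $i\ge 1$; combined with the interval length $k/2$, this piece is $O(k^{2-\lambda})$. For (III), the analogous Taylor bound together with the direct computation $\int_{t_{i+1/2}}^{t_{i+1/2+\alpha}}(t_{i+1/2+\alpha}-\tau)^{-\lambda}d\tau=(\alpha k)^{1-\lambda}/(1-\lambda)$ again yields $O(k^{2-\lambda})$.

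The case $r=1$ is strictly analogous: use decomposition $(\ref{51})$, the quadratic-interpolation error $(\ref{48})$ on $[t_l,t_{l+1}]$, and a single terminal boundary integral on $[t_{i+1},t_{i+1+\alpha}]$, all treated with the same telescoping and Taylor arguments. All constants involve only $\|u_{2t,j}\|_\infty$ and $\|u_{3t,j}\|_\infty$, each dominated by $\||u|\|_{\mathcal{C}^{6,3}_D}$. Squaring the pointwise-in-$j$ bound, multiplying by $h$, summing over $j=1,\dots,M-1$, and taking the square root as in $(\ref{2aa})$ absorbs a $\sqrt{L_1}$ factor into $C_r$ and delivers the stated bound. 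The main technical obstacle I anticipate is the careful bookkeeping for the various Peano-form remainder derivatives under the weakly singular kernel, and the need to invoke $t_i\ge k$ in piece (II), which is precisely why the $i=0$ case is treated separately.
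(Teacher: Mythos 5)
Your proof is correct, and the overall decomposition is the same as the paper's (it is essentially forced by the representations \pref{18}--\pref{21} and \pref{51}): an initial boundary integral, a sum of quadratic-interpolation remainders, and a terminal boundary integral, each controlled by a Taylor/mean-value bound together with the explicit integrability of the weakly singular kernel. Where you genuinely diverge is in the interpolation-remainder sum, your piece (I): the paper keeps $E^l$ itself, bounded via $\max|m_l(\tau)|=k^{3}/(72\sqrt{3})$ times $\max|u_{3\tau,j}|$, and --- in effect integrating by parts, using that $E^{l}$ vanishes at the three nodes $t_{l+\frac{1}{2}},t_{l+1},t_{l+\frac{3}{2}}$ --- telescopes the kernel values $(i+\alpha-l-1)^{-\lambda}-(i+\alpha-l)^{-\lambda}$ to obtain $O(k^{3-\lambda})$; you instead bound the \emph{derivative} $|E^{l}_{\tau,j}|\leq Ck^{2}\max|u_{3\tau,j}|$ (two applications of Rolle's theorem between the nodes) and telescope the kernel integrals to an $O(1)$ quantity, obtaining $O(k^{2})$. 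Both are $\leq Ck^{2-\lambda}$; your route avoids the integration by parts and the vanishing of $E^{l}$ at the nodes, while the paper's is sharper and makes visible that only the boundary integrals are genuinely $O(k^{2-\lambda})$. The same remark applies to your $i=0$ case, where you use the crude uniform bound $|u_{\tau,j}(\tau)-\delta_{t}u_{j}^{0}|=O(k)$ in place of the paper's second-order expansion about $t_{\frac{1}{4}}$, which shows that two of the three resulting terms are in fact $O(k^{3-\lambda})$ and only the first-moment term contributes at order $k^{2-\lambda}$; again both arguments deliver the claimed rate, and your passage to the discrete $L^{2}$ norm is the same as the paper's.
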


   \begin{proof}
   A combination of equations $(\ref{18})$ and $(\ref{20})$ yields
   \begin{equation}\label{67}
   |cD_{0t}^{\lambda}u^{\frac{1}{2}+\alpha}_{j}-c\Delta_{0t}^{\lambda}u^{\frac{1}{2}+\alpha}_{j}|=\frac{1}{\Gamma(1-\lambda)^{2}}
   \left|\int_{0}^{t_{\frac{1}{2}+\alpha}}(u_{\tau,j}(\tau)-\delta_{t}u_{j}^{0})(t_{\frac{1}{2}+\alpha}-\tau)^{-\lambda}d\tau\right|.
   \end{equation}
   The application of the Taylor series gives
   \begin{equation}\label{67a}
   u_{\tau,j}(\tau)=u_{\tau,j}(t_{\frac{1}{4}})+(\tau-t_{\frac{1}{4}})u_{2\tau,j}(t_{\frac{1}{4}})+\frac{1}{2}(\tau-
   t_{\frac{1}{4}})^{2}u_{3\tau,j}(\tau_{1}),
   \end{equation}
   where $\tau_{1}$ is between the minimum and maximum of $\tau$ and $t_{\frac{1}{4}}$. Substituting $(\ref{67a})$ into $(\ref{67})$ to obtain
   \begin{equation*}
    |cD_{0t}^{\lambda}u^{\frac{1}{2}+\alpha}_{j}-c\Delta_{0t}^{\lambda}u^{\frac{1}{2}+\alpha}_{j}|=\frac{1}{\Gamma(1-\lambda)}
   \left|(u_{\tau,j}(t_{\frac{1}{4}})-\delta_{t}u_{j}^{0})\int_{0}^{t_{\frac{1}{2}+\alpha}}(t_{\frac{1}{2}+\alpha}-\tau)^{-\lambda}d\tau+\right.
   \end{equation*}
   \begin{equation}\label{68}
    \left.u_{2\tau,j}(t_{\frac{1}{4}})\int_{0}^{t_{\frac{1}{2}+\alpha}}(\tau-t_{\frac{1}{4}})(t_{\frac{1}{2}+\alpha}-\tau)^{-\lambda}d\tau+
    \frac{1}{2}\int_{0}^{t_{\frac{1}{2}+\alpha}}(\tau-t_{\frac{1}{4}})^{2}u_{3\tau,j}(\tau)(t_{\frac{1}{2}+\alpha}-\tau)^{-\lambda}d\tau\right|.
   \end{equation}
   Performing simple calculations, it is easy to see that
   \begin{equation*}
    u_{j}^{i+\frac{1}{2}}=u_{j}^{i+\frac{1}{4}}+\frac{k}{4}u_{\tau,j}^{i+\frac{1}{4}}+\frac{k^{2}}{8}u_{2\tau,j}^{i+\frac{1}{4}}+O(k^{3});\text{\,\,\,}
     u_{j}^{i}=u_{j}^{i+\frac{1}{4}}-\frac{k}{4}u_{\tau,j}^{i+\frac{1}{4}}+\frac{k^{2}}{8}u_{2\tau,j}^{i+\frac{1}{4}}+O(k^{3}).
   \end{equation*}
   Using this, simple calculations result in
   \begin{equation}\label{68b}
    u_{\tau,j}^{i+\frac{1}{4}}-\delta_{t}u_{j}^{i}=O(k^{2}),\text{\,\,\,for\,\,\,}i\geq0.
    \end{equation}
    We recall that $\delta_{t}u_{j}^{i}=\frac{2}{k}(u_{j}^{i+\frac{1}{2}}-u_{j}^{i})$. So
    \begin{equation}\label{69}
    (u_{\tau,j}(t_{\frac{1}{4}})-\delta_{t}u_{j}^{0})\int_{0}^{t_{\frac{1}{2}+\alpha}}(t_{\frac{1}{2}+\alpha}-\tau)^{-\lambda}d\tau=
    \frac{-1}{1-\lambda}(u_{\tau,j}(t_{\frac{1}{4}})-\delta_{t}u_{j}^{0})\left[(t_{\frac{1}{2}+\alpha}-
    \tau)^{1-\lambda}\right]_{0}^{t_{\frac{1}{2}+\alpha}}=O(k^{3-\lambda}).
    \end{equation}
    On the other hand
    \begin{equation*}
    u_{2\tau,j}(t_{\frac{1}{4}})\int_{0}^{t_{\frac{1}{2}+\alpha}}(\tau-t_{\frac{1}{4}})(t_{\frac{1}{2}+\alpha}-\tau)^{-\lambda}d\tau=
    \frac{-u_{2\tau,j}(t_{\frac{1}{4}})}{1-\lambda}\left[(\tau-t_{\frac{1}{4}})(t_{\frac{1}{2}+\alpha}-\tau)^{1-\lambda}+
    \frac{1}{2-\lambda}(t_{\frac{1}{2}+\alpha}-\tau)^{2-\lambda}\right]_{0}^{t_{\frac{1}{2}+\alpha}}
   \end{equation*}
    \begin{equation}\label{70}
    =\frac{k^{2-\lambda}(\frac{1}{2}+\alpha)^{1-\lambda}}{1-\lambda}\left(-\frac{1}{4}+\frac{\frac{1}{2}+\alpha}{2-\lambda}\right)=O(k^{2-\lambda}).
    \end{equation}
    Setting $m(\tau)=(\tau-t_{\frac{1}{4}})^{2},$ so $\frac{d}{d\tau}m(\tau)=0,$ implies $\tau=t_{\frac{1}{4}}.$ So $\underset{0\leq\tau\leq t_{\frac{1}{2}
    +\alpha}}{\max}|m(\tau)|=(\frac{1}{4}+\alpha)^{2}k^{2}.$ Thus
    \begin{equation*}
    \left|\frac{1}{2}\int_{0}^{t_{\frac{1}{2}+\alpha}}(\tau-t_{\frac{1}{4}})^{2}u_{3\tau,j}(\tau)(t_{\frac{1}{2}+\alpha}-\tau)^{-\lambda}d\tau\right|
    \leq \frac{1}{2(1-\lambda)}(\frac{1}{4}+\alpha)^{2}k^{2}\underset{0\leq\tau\leq t_{\frac{1}{2}+\alpha}}{\max}|u_{3\tau,j}(\tau)|*
   \end{equation*}
    \begin{equation}\label{71}
    \left|\left[(t_{\frac{1}{2}+\alpha}-\tau)^{1-\lambda}\right]_{0}^{t_{\frac{1}{2}+\alpha}}\right|=
    \frac{1}{2(1-\lambda)}(\frac{1}{2}+\alpha)^{1-\lambda}(\frac{1}{4}+\alpha)^{2}k^{3-\lambda}\underset{0\leq\tau\leq t_{\frac{1}{2}+\alpha}}{\max}
   |u_{3\tau,j}(\tau)|=O(k^{3-\lambda}).
    \end{equation}
    Plugging estimates $(\ref{68})$-$(\ref{71})$ to get $|cD_{0t}^{\lambda}u^{\frac{1}{2}+\alpha}_{j}-c\Delta_{0t}^{\lambda}u^{\frac{1}{2}+\alpha}_{j}|
   =O(k^{2-\lambda})$. This fact, together with the definition of $L^{2}$-norm give
     \begin{equation}\label{71a}
   \|cD_{0t}^{\lambda}u^{\frac{1}{2}+\alpha}_{j}-c\Delta_{0t}^{\lambda}u^{\frac{1}{2}+\alpha}_{j}\|_{L^{2}}\leq C_{1}^{0}k^{2-\lambda},
   \end{equation}
   where $C_{1}^{0}$ is a positive constant. Furthermore, for $1\leq i\leq N-1,$ a combination of equations $(\ref{19})$ and $(\ref{21})$ provides
    \begin{equation*}
        cD_{0t}^{\lambda}u_{j}^{i+\frac{1}{2}+\alpha}-c\Delta_{0t}^{\lambda}u_{j}^{i+\frac{1}{2}+\alpha}=\frac{1}{\Gamma(1-\lambda)}\left\{
        \underset{l=0}{\overset{i-1}\sum}\int_{t_{l+\frac{1}{2}}}^{t_{l+\frac{3}{2}}}E^{l}_{\tau,j}(\tau)(t_{i+\frac{1}{2}
      +\alpha}-\tau)^{-\lambda}d\tau+\int_{0}^{t_{\frac{1}{2}}}\frac{u_{\tau,j}(\tau)-\delta_{t}u_{j}^{0}}{(t_{i+\frac{1}{2}
      +\alpha}-\tau)^{\lambda}}d\tau\right.
       \end{equation*}
       \begin{equation}\label{72}
      \left.+\int_{t_{i+\frac{1}{2}}}^{t_{i+\frac{1}{2}+\alpha}}(u_{\tau,j}(\tau)-\delta_{t}u_{j}^{i})(t_{i+\frac{1}{2}+\alpha}
      -\tau)^{-\lambda}d\tau\right\},\text{\,\,\,\,\,for\,\,\,\,\,}i\geq1.
       \end{equation}
       By straightforward computations, it is not hard to show that
        \begin{equation}\label{74}
      \left|\int_{t_{i+\frac{1}{2}}}^{t_{i+\frac{1}{2}+\alpha}}\frac{u_{\tau,j}(\tau)-\delta_{t}u_{j}^{i}}{(t_{i+\frac{1}{2}+\alpha}
      -\tau)^{\lambda}}d\tau\right|\leq \frac{(\alpha k)^{1-\lambda}}{1-\lambda}\left\{|u_{\tau,j}^{i+\frac{1}{2}}-\delta_{t}u_{j}^{i}|+
      \frac{\alpha k}{2-\lambda}\underset{t_{i+\frac{1}{2}}\leq\tau\leq t_{i+\frac{1}{2}+\alpha}}{\max}|u_{2\tau,j}(\tau)|\right\}
      =O(k^{2-\lambda}),
       \end{equation}
       where $u_{\tau,j}^{i+\frac{1}{2}}-\delta_{t}u_{j}^{i}=O(k^{2})$ is given by $(\ref{68b})$. In addition, using equation $(\ref{67a})$, it is
     not difficult to prove that
        \begin{equation}\label{73}
      \left|\int_{0}^{t_{\frac{1}{2}}}\frac{u_{\tau,j}(\tau)-\delta_{t}u_{j}^{0}}{(t_{i+\frac{1}{2}+\alpha}-\tau)^{\lambda}}d\tau\right|\leq
      \frac{k}{2}|u_{\tau,j}^{\frac{1}{4}}-\delta_{t}u_{j}^{0}|t_{i+\alpha}^{-\lambda}+\frac{k^{2}}{8}t_{i+\alpha}^{-\lambda}+
      \frac{k^{3}}{128}\cdot\underset{0\leq\tau\leq t_{\frac{1}{2}}}{\max}|u_{3\tau,j}(\tau)|= O(k^{2-\lambda}),
       \end{equation}
       since $u_{\tau,j}^{\frac{1}{4}}-\delta_{t}u_{j}^{0}=O(k^{2})$ and $t_{i+\alpha}^{-\lambda}=k^{-\lambda}(i+\alpha)^{-\lambda}\leq
      k^{-\lambda}\alpha^{-\lambda}$. Now, setting $m_{l}(\tau)=\frac{1}{6}(\tau-t_{l+\frac{1}{2}})(\tau-t_{l+1})(\tau-t_{l+\frac{3}{2}})$, simple
      calculations result in
       \begin{equation}\label{73b}
      \underset{t_{l+\frac{1}{2}}\leq\tau\leq t_{l+\frac{3}{2}}}{\max}|m_{l}(\tau)|=\frac{k^{3}}{72\sqrt{3}}.
       \end{equation}
       Combining of relations $(\ref{4a})$ and $(\ref{73b})$, direct computations provide
       \begin{equation*}
       \left|\int_{t_{l+\frac{1}{2}}}^{t_{l+\frac{3}{2}}}E^{l}_{\tau,j}(\tau)(t_{i+\frac{1}{2}+\alpha}-\tau)^{-\lambda}d\tau\right|
       \leq \frac{k^{3-\lambda}}{72\sqrt{3}}[(i+\alpha-l-1)^{-\lambda}-(i+\alpha-l)^{-\lambda}]\underset{t_{l+\frac{1}{2}}\leq\tau\leq
     t_{l+\frac{3}{2}}}{\max}|u_{3\tau,j}(\tau)|.
       \end{equation*}
       Since $i\geq1$, summing this up from $l=0,1,...,i-1,$ to obtain
       \begin{equation}\label{75}
       \underset{l=0}{\overset{i-1}\sum}\left|\int_{t_{l+\frac{1}{2}}}^{t_{l+\frac{3}{2}}}E^{l}_{\tau,j}(\tau)
       (t_{i+\frac{1}{2}+\alpha}-\tau)^{-\lambda}d\tau\right|\leq \frac{k^{3-\lambda}}{72\sqrt{3}}[\alpha^{-\lambda}-(i+\alpha)^{-\lambda}]
       \underset{0\leq\tau\leq t_{i-1}}{\max}|u_{3\tau,j}(\tau)|=O(k^{3-\lambda}).
       \end{equation}
      Plugging equation $(\ref{72})$ and estimate $(\ref{74})$-$(\ref{75})$, it is easy to see that
       \begin{equation}\label{75a}
      \|cD_{0t}^{\lambda}u^{i+\frac{1}{2}+\alpha}_{j}-c\Delta_{0t}^{\lambda}u^{i+\frac{1}{2}+\alpha}_{j}\|_{L^{2}}\leq C_{\frac{1}{2}}k^{2-\lambda},
      \text{\,\,\,for\,\,\,}i\geq1,
       \end{equation}
      where $C_{\frac{1}{2}}$ is a positive constant. In a similar manner, one easily proves the following inequality
       \begin{equation}\label{75b}
      \|cD_{0t}^{\lambda}u^{i+1+\alpha}_{j}-c\Delta_{0t}^{\lambda}u^{i+1+\alpha}_{j}\|_{L^{2}}\leq C_{1}k^{2-\lambda},
      \text{\,\,\,for\,\,\,}i\geq0,
       \end{equation}
        where $C_{1}>0$ is a constant. The proof of Lemma $\ref{l1}$ is completed by taking the maximum over $i$ ($0\leq i\leq N-1$) of estimates
    $(\ref{71a})$ and $(\ref{75a})$ (respectively, estimate $(\ref{75b})$).
   \end{proof}

   \begin{lemma}\label{l2}
   Define the following linear operators
   \begin{equation*}
      L_{h}u_{j}^{\gamma_{i}}=\frac{q^{\widetilde{\gamma}_{i}}}{12h^{2}}\left[-u_{j+2}^{\gamma_{i}}+16u_{j+1}^{\gamma_{i}}-30u_{j}^{\gamma_{i}}+
      16u_{j-1}^{\gamma_{i}}-u_{j-2}^{\gamma_{i}}\right]-
       \end{equation*}
      \begin{equation}\label{79}
       \frac{p^{\widetilde{\gamma}_{i}}}{12h}\left[-u_{j+2}^{\gamma_{i}}+8u_{j+1}^{\gamma_{i}}-8u_{j-1}^{\gamma_{i}}+u_{j-2}^{\gamma_{i}}\right]-
      g_{j}^{\widetilde{\gamma}_{i}}u_{j}^{\gamma_{i}},
       \end{equation}
   for $j=2,3,...,M-2$, and
    \begin{equation}\label{79a}
    Lu_{j}^{\widetilde{\gamma}_{i}}=[q(t)u_{2x}-p(t)u_{x}-g(x,t)u]|_{(x_{j},t_{\widetilde{\gamma}_{i}})},
    \end{equation}
   where $\widetilde{\gamma}_{i}\in\{i+\frac{1}{2}+\alpha,\text{\,\,}i+1+\alpha\}$ and $u_{j}^{\gamma_{i}}=u_{j}^{\alpha_{i}},$ $u_{j}^{\theta_{i}}$,
   for $0\leq i\leq N$. So, it holds
   \begin{equation}\label{80}
    \| L_{h}u^{\gamma_{i}}-Lu^{\widetilde{\gamma}_{i}}\|_{L^{2}}\leq A_{g}^{pq}\left[7\alpha(1+2\alpha)(\frac{1}{2}+2\alpha)k^{2}+\frac{53}{270}(12
    +\alpha(\frac{1}{2}+\alpha)(\frac{1}{2}+2\alpha)k^{2})h^{4}\right]\||u|\|_{\mathcal{C}^{6,3}_{D}},
   \end{equation}
   where
   \begin{equation}\label{80a}
     A_{g}^{pq}=\frac{\sqrt{3}}{12}\max\{\|q\|_{\mathcal{C}^{0}_{\Gamma}},\|p\|_{\mathcal{C}^{0}_{\Gamma}},\||g|\|_{\mathcal{C}^{6,3}_{D}}\},
   \end{equation}
   where $\Gamma=[0,T]$, $\|v\|_{\mathcal{C}^{0}_{\Gamma}}=\underset{0\leq i\leq N}{\max}|v^{i}|$ and $\||\cdot|\|_{\mathcal{C}^{6,3}_{D}}$ is defined
   in relation $(\ref{2aa})$.
   \end{lemma}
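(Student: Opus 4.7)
The plan is to recognize $L_{h}u^{\gamma_{i}}-Lu^{\widetilde{\gamma}_{i}}$ as the sum of the fourth-order finite-difference truncation error in space and the $k^{2}$ Taylor-extrapolation remainder produced by representing $u^{\widetilde{\gamma}_{i}}$ by the discrete combination $u^{\gamma_{i}}$. I treat the case $\widetilde{\gamma}_{i}=i+\frac{1}{2}+\alpha$, $u_{j}^{\gamma_{i}}=u_{j}^{\alpha_{i}}$ first; the other case is handled identically, with $(\psi_{3},\psi_{4},H_{2},\theta_{i})$ playing the role of $(\psi_{1},\psi_{2},H_{1},\alpha_{i})$ and with $(\ref{57})$--$(\ref{58})$ and $(\ref{55})$ replacing $(\ref{40})$--$(\ref{41})$ and $(\ref{28c})$. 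Solving $(\ref{40})$ and $(\ref{41})$ for the fourth-order stencils and invoking $(\ref{28c})$ to write $u_{j}^{i+\frac{1}{2}+\alpha}=u_{j}^{\alpha_{i}}-\alpha(\tfrac{1}{2}+\alpha)k^{2}H_{1,j}^{i}$ produces the pointwise identity
\[
L_{h}u_{j}^{\alpha_{i}}-Lu_{j}^{i+\frac{1}{2}+\alpha}=-q^{i+\frac{1}{2}+\alpha}\psi_{1,j}^{i}+p^{i+\frac{1}{2}+\alpha}\psi_{2,j}^{i}-\alpha(\tfrac{1}{2}+\alpha)k^{2}g_{j}^{i+\frac{1}{2}+\alpha}H_{1,j}^{i},
\]
whose right-hand side is a sum of quantities already controlled in Section \ref{sec2}.

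The next step is a uniform estimate of the three contributions. For $\psi_{1,j}^{i}$ I split $(\ref{42})$ into its pure $h^{4}$ part, bounded by $\tfrac{38}{270}h^{4}\|u_{6x}\|_{\infty}$ (weights $18,18,-1,-1$), and its $k^{2}$ part, which is the fourth-order stencil of $(\ref{31})$ applied to $H_{1}$; reusing $(\ref{31})$ with $H_{1}$ in place of $u$ identifies this stencil with $-H_{1,xx}(x_{j},\cdot)$ up to an $O(h^{4})$ correction, and both $H_{1}$ and its spatial derivatives are dominated by $(\tfrac{1}{2}+2\alpha)$ times mixed seminorms inside $\||u|\|_{\mathcal{C}^{6,3}_{D}}$ through $(\ref{28d})$. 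A parallel argument based on $(\ref{32})$ and $(\ref{43})$ handles $\psi_{2,j}^{i}$, and $|H_{1,j}^{i}|\le(\tfrac{1}{2}+2\alpha)\|u_{2t}\|_{\infty}$ is immediate from $(\ref{28d})$. Bounding the coefficients by $\|q\|_{\mathcal{C}^{0}_{\Gamma}}$, $\|p\|_{\mathcal{C}^{0}_{\Gamma}}$, $\||g|\|_{\mathcal{C}^{6,3}_{D}}$ and absorbing their common maximum through the prefactor $A_{g}^{pq}$ from $(\ref{80a})$ gives a pointwise bound of the stated form; the passage to the discrete $L^{2}$-norm of $(\ref{2aa})$ is then immediate since the estimate is uniform in $j$.

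The delicate point will be the bookkeeping of numerical constants: one has to combine the pure $h^{4}$ Taylor remainders of $\psi_{1}$ and $\psi_{2}$ with the $k^{2}h^{4}$ cross term coming from the stencils applied to $H_{1}$, and then merge them with the $k^{2}$ contribution of $g\cdot H_{1}$, so that the $h^{4}$ coefficient collapses to $\tfrac{53}{270}\bigl(12+\alpha(\tfrac{1}{2}+\alpha)(\tfrac{1}{2}+2\alpha)k^{2}\bigr)$ and the pure $k^{2}$ coefficient to $7\alpha(1+2\alpha)(\tfrac{1}{2}+2\alpha)$; similarly, the factor $\tfrac{\sqrt{3}}{12}$ inside $A_{g}^{pq}$ must be traced back to the maxima of the Taylor-remainder constants $\tfrac{1}{270}$ and $\tfrac{1}{180}$ recast in a common form. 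Once those constants are aligned, the remainder of the argument is routine triangle-inequality work.
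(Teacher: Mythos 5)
Your proposal follows essentially the same route as the paper: it starts from the pointwise identity $Lu_{j}^{\widetilde{\gamma}_{i}}-L_{h}u_{j}^{\gamma_{i}}=q^{\widetilde{\gamma}_{i}}\psi_{l,j}^{i}-p^{\widetilde{\gamma}_{i}}\psi_{r,j}^{i}+\alpha(\tfrac{1}{2}+\alpha)k^{2}g_{j}^{\widetilde{\gamma}_{i}}H_{m_{rl},j}^{i}$, re-expands the $k^{2}$-stencil parts of $\psi_{1},\psi_{2}$ as $\partial_{2x}H_{1}$ and $\partial_{x}H_{1}$ plus $O(h^{4})$ corrections exactly as in $(\ref{82})$--$(\ref{85})$, and invokes the bound $\||\partial_{mx}H_{s}^{i}|\|_{\mathcal{C}^{6,3}_{D}}\leq(\tfrac{1}{2}+2\alpha)\||u|\|_{\mathcal{C}^{6,3}_{D}}$ of $(\ref{88})$ before collecting constants. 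The only cosmetic divergence is at the very end: the paper obtains the factor $\sqrt{3}$ in $A_{g}^{pq}$ from the Cauchy--Schwarz bound $(a+b+c)^{2}\leq 3(a^{2}+b^{2}+c^{2})$ applied before summing over $j$, rather than from the Taylor-remainder constants as you suggest, but this does not affect the validity of your argument.
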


   \begin{proof}
   Considering relations $(\ref{40})$-$(\ref{41})$, $(\ref{55})$, $(\ref{57})$-$(\ref{58})$ and $(\ref{79})$-$(\ref{79a})$, simple computations give
   \begin{equation}\label{81}
    Lu_{j}^{\widetilde{\gamma}_{i}}-L_{h}u_{j}^{\gamma_{i}}=q^{\widetilde{\gamma}_{i}}\psi_{l,j}^{i}-p^{\widetilde{\gamma}_{i}}\psi_{r,j}^{i}
    +\alpha(\frac{1}{2}+\alpha)k^{2}g_{j}^{\widetilde{\gamma}_{i}}H_{m_{rl},j}^{i},
   \end{equation}
   where $(l,r,\widetilde{\gamma}_{i},m_{rl})\in\{(1,2,i+\frac{1}{2}+\alpha,1),(3,4,i+1+\alpha,2)\}$, $\psi_{l,j}^{i}$ are given by equations $(\ref{42})$
    and $(\ref{60})$, $\psi_{r,j}^{i}$ are given by equations $(\ref{43})$ and $(\ref{61})$. Working as in Section $\ref{sec2}$ to approximate the terms
   $u_{2x}(x_{j},t_{i})$ and $u_{x}(x_{j},t_{i})$, one easily shows that
     \begin{equation*}
       \partial_{2x}H_{l,j}^{i}=\frac{1}{12h^{2}}\left[-H_{l,j+2}^{i}+16H_{l,j+1}^{i}-30H_{l,j}^{i}+16H_{l,j-1}^{i}-H_{l,j-2}^{i}\right]+
       \end{equation*}
        \begin{equation}\label{82}
        \frac{h^{4}}{270}\left[18\partial_{6x}H_{l}^{i}(\epsilon_{l,j}^{20})+18\partial_{6x}H_{l}^{i}(\epsilon_{l,j}^{21})-
      \partial_{6x}H_{l}^{i}(\epsilon_{l,j}^{22})-\partial_{6x}H_{l}^{i}(\epsilon_{l,j}^{23})\right],
      \end{equation}
      and
       \begin{equation*}
      \partial_{x}H_{r,j}^{i}=\frac{1}{12h}\left[-H_{r,j+2}^{i}+8H_{r,j+1}^{i}-8H_{r,j-1}^{i}+H_{r,j-2}^{i}\right]+
       \end{equation*}
        \begin{equation}\label{83}
        \frac{h^{4}}{180}\left[4\partial_{5x}H_{r}^{i}(\epsilon_{r,j}^{20})+4\partial_{5x}H_{r}^{i}(\epsilon_{r,j}^{21})-
      \partial_{5x}H_{r}^{i}(\epsilon_{r,j}^{22})-\partial_{5x}H_{r}^{i}(\epsilon_{r,j}^{23})\right],
      \end{equation}
      where
       \begin{equation*}
       \epsilon_{l,j}^{22},\epsilon_{r,j}^{22}\in(x_{j-1},x_{j}),\text{\,\,}\epsilon_{l,j}^{23},\epsilon_{r,j}^{23}\in(x_{j},x_{j+1}),
       \text{\,\,}\epsilon_{l,j}^{20},\epsilon_{r,j}^{20}\in(x_{j-2},x_{j}),\text{\,\,} \epsilon_{l,j}^{21},\epsilon_{r,j}^{21}\in(x_{j},x_{j+2}),
       \end{equation*}
       where $\partial_{mx}H_{s}^{i}$, for $m,s=1,2$, denote $\frac{\partial^{m}H_{s}^{i}}{\partial x^{m}}.$ The functions $H_{s}^{i}$ are defined by
      relations $(\ref{28d})$ and $(\ref{56})$.\\

       Plugging equations $(\ref{42})$, $(\ref{60})$ and $(\ref{82})$, straightforward calculations provide
       \begin{equation*}
      \psi_{l,j}^{i}=-\frac{\alpha(\frac{1}{2}+\alpha)k^{2}}{12}\partial_{2x}H_{l,j}^{i}+\frac{h^{4}}{270}
      \left[18\left(u_{6x}^{\widetilde{\gamma}_{i}}(\epsilon_{l,j}^{1})+\frac{\alpha(\frac{1}{2}+\alpha)k^{2}}{12}\partial_{6x}H_{l}^{i}(\epsilon_{l,j}^{20})\right)
      +18\left(\frac{\alpha(\frac{1}{2}+\alpha)k^{2}}{12}\partial_{6x}H_{l}^{i}(\epsilon_{l,j}^{21})\right.\right.
       \end{equation*}
        \begin{equation}\label{84}
      \left.\left.+u_{6x}^{\widetilde{\gamma_{i}}}(\epsilon_{l,j}^{2})\right)-\left(u_{6x}^{\widetilde{\gamma_{i}}}(\epsilon_{l,j}^{3})+\frac{\alpha(\frac{1}{2}+\alpha)
      k^{2}}{12}\partial_{6x}H_{l}^{i}(\epsilon_{l,j}^{22})\right)-\left(u_{6x}^{\widetilde{\gamma_{i}}}(\epsilon_{l,j}^{4})+\frac{\alpha(\frac{1}{2}+\alpha)k^{2}}{12}
      \partial_{6x}H_{l}^{i}(\epsilon_{l,j}^{23})\right)\right],
      \end{equation}
      where
        \begin{equation*}
       \epsilon_{l,j}^{1}\in\{\epsilon_{j}^{6},\epsilon_{j}^{12}\},\text{\,\,}\epsilon_{l,j}^{2}\in\{\epsilon_{j}^{7},\epsilon_{j}^{13}\},
       \text{\,\,}\epsilon_{l,j}^{3}\in\{\epsilon_{j}^{4},\epsilon_{j}^{14}\},\text{\,\,}\epsilon_{l,j}^{4}\in\{\epsilon_{j}^{5},\epsilon_{j}^{15}\}.
       \end{equation*}
      Analogously, considering equations $(\ref{43})$, $(\ref{61})$ and $(\ref{83})$, simple computations yield
       \begin{equation*}
      \psi_{r,j}^{i}=-\frac{\alpha(\frac{1}{2}+\alpha)k^{2}}{12}\partial_{x}H_{r,j}^{i}+\frac{h^{4}}{180}
      \left[4\left(u_{5x}^{\widetilde{\gamma_{i}}}(\epsilon_{r,j}^{1})+\frac{\alpha(\frac{1}{2}+\alpha)k^{2}}{12}\partial_{5x}H_{r}^{i}(\epsilon_{r,j}^{20})\right)
      +4\left(\frac{\alpha(\frac{1}{2}+\alpha)k^{2}}{12}\partial_{5x}H_{r}^{i}(\epsilon_{r,j}^{21})\right.\right.
       \end{equation*}
        \begin{equation}\label{85}
      \left.\left.+u_{5x}^{\widetilde{\gamma_{i}}}(\epsilon_{r,j}^{2})\right)-\left(u_{5x}^{\widetilde{\gamma_{i}}}(\epsilon_{r,j}^{3})+\frac{\alpha(\frac{1}{2}+\alpha)
      k^{2}}{12}\partial_{5x}H_{r}^{i}(\epsilon_{r,j}^{22})\right)-\left(u_{5x}^{\widetilde{\gamma_{i}}}(\epsilon_{r,j}^{4})+\frac{\alpha(\frac{1}{2}+\alpha)k^{2}}{12}
      \partial_{5x}H_{r}^{i}(\epsilon_{r,j}^{23})\right)\right],
      \end{equation}
      where
        \begin{equation*}
       \epsilon_{r,j}^{1}=\epsilon_{j}^{10},\epsilon_{j}^{18},\text{\,\,}\epsilon_{r,j}^{2}=\epsilon_{j}^{11},\epsilon_{j}^{19},
       \text{\,\,}\epsilon_{r,j}^{3}=\epsilon_{j}^{8},\epsilon_{j}^{16},\text{\,\,}\epsilon_{r,j}^{4}=\epsilon_{j}^{9},\epsilon_{j}^{17},
       \end{equation*}
     with $(l,r,\widetilde{\gamma}_{i})\in\{(1,2,i+\frac{1}{2}+\alpha),(3,4,i+1+\alpha)\}$.\\

     Now, using equations $(\ref{28d})$ and $(\ref{56})$, it is not hard to show that
       \begin{equation}\label{88}
     \||\partial_{mx}H_{s}^{i}|\|_{\mathcal{C}_{D}^{6,3}}\leq(\frac{1}{2}+2\alpha)\||u|\|_{\mathcal{C}_{D}^{6,3}},\text{\,\,\,for\,\,\,}s=1,2.
      \end{equation}
      Substituting equations $(\ref{28d})$ and $(\ref{56})$ into equations $(\ref{84})$ and $(\ref{85})$, respectively, performing simple calculations and
      utilizing estimate equations $(\ref{88})$, it is not difficult to observe that
       \begin{equation}\label{89}
     \||\psi_{l}^{i}|\|_{\mathcal{C}_{D}^{6,3}}\leq \frac{1}{12}\left\{\alpha(\frac{1}{2}+\alpha)(\frac{1}{2}+2\alpha)k^{2}+\frac{19}{135}
     [12+\alpha(\frac{1}{2}+\alpha)(\frac{1}{2}+2\alpha)k^{2}]h^{4}\right\}\||u|\|_{\mathcal{C}_{D}^{6,3}},\text{\,\,\,for\,\,\,}l=1,3,
      \end{equation}
      and
      \begin{equation}\label{90}
     \||\psi_{r}^{i}|\|_{\mathcal{C}_{D}^{6,3}}\leq \frac{1}{12}\left\{\alpha(\frac{1}{2}+\alpha)(\frac{1}{2}+2\alpha)k^{2}+\frac{1}{18}
     [12+\alpha(\frac{1}{2}+\alpha)(\frac{1}{2}+2\alpha)k^{2}]h^{4}\right\}\||u|\|_{\mathcal{C}_{D}^{6,3}},\text{\,\,\,for\,\,\,}r=2,4.
      \end{equation}
      Taking the square of both sides of equation $(\ref{81})$ and applying the Cauchy-Schwarz inequality, it holds
   \begin{equation*}
    [L_{h}u^{\gamma_{i}}-Lu^{\widetilde{\gamma}_{i}}]^{2}=3[(q^{\widetilde{\gamma}_{i}}\psi_{l,j}^{i})^{2}+(p^{\widetilde{\gamma}_{i}}\psi_{r,j}^{i})^{2}
    +\alpha^{2}(\frac{1}{2}+\alpha)^{2}k^{4}(g_{j}^{\widetilde{\gamma}_{i}}H_{m_{rl},j}^{i})^{2}]\leq3\left[\|q\|_{\mathcal{C}_{\Gamma}^{0}}^{2}
    \||\psi_{l}|\|_{\mathcal{C}_{D}^{6,3}}^{2}+\right.
   \end{equation*}
   \begin{equation*}
    \left.\|p\|_{\mathcal{C}_{\Gamma}^{0}}^{2}\||\psi_{r}|\|_{\mathcal{C}_{D}^{6,3}}^{2}+\alpha^{2}(\frac{1}{2}+\alpha)^{2}k^{4}
    \||g|\|_{\mathcal{C}_{D}^{6,3}}^{2}\||H_{m_{rl}}|\|_{\mathcal{C}_{D}^{6,3}}^{2}\right].
   \end{equation*}
   Substituting estimates $(\ref{88})$-$(\ref{89})$ into this inequality, summing up from $j=1,2,...,M-1$, and multiplying both sides of the obtained
   estimate by $h$, direct calculations result in
   \begin{equation*}
    \|L_{h}u^{\gamma_{i}}-Lu^{\widetilde{\gamma}_{i}}\|^{2}_{L^{2}}\leq\frac{3}{144}\max\{\|q\|_{\mathcal{C}_{\Gamma}^{0}}^{2},
    \|p\|_{\mathcal{C}_{\Gamma}^{0}}^{2},\||g|\|_{\mathcal{C}_{D}^{6,3}}^{2}\}\left[7\alpha(\frac{1}{2}+\alpha)(1+2\alpha)k^{2}+\right.
   \end{equation*}
   \begin{equation*}
    \left.\frac{53}{270}(12+\alpha(\frac{1}{2}+\alpha)(\frac{1}{2}+2\alpha)k^{2})h^{4}\right]^{2}\||u|\|_{\mathcal{C}_{D}^{6,3}}^{2}.
   \end{equation*}
   Taking the square root of both sides of this estimate to get relation $(\ref{80})$. This ends the proof of Lemma $\ref{l2}$.
   \end{proof}

   \begin{lemma}\label{l3}
   Let $u\in\mathcal{C}^{2,0}_{D}$, be a function defined on $D=[0,L_{1}]\times[0,T]$, such that $u(0,t)=u(L_{1},t)=0,$ for every $t\in[0,T]$. Suppose
    $U(t)\in\mathcal{Y}_{h}(t)=\{U_{j}(t);\text{\,}j=0,1,...,M\}$ be a grid function satisfying $U_{j}(t)=u(x_{j},t),$ for $j=0,1,...,M$. So, it holds
   \begin{equation*}
   (-Lu(t),u(t))\geq\gamma\|u_{x}(t)\|_{L^{2}}^{2}+\beta\|u(t)\|_{L^{2}}^{2}\text{\,\,\,and\,\,\,}
    (-L_{h}U(t),U(t))\geq\gamma\|\delta_{x}U(t)\|_{L^{2}}^{2}+\beta\|U(t)\|_{L^{2}}^{2},
   \end{equation*}
   for any $t\in[0,T]$, where $\gamma=\underset{t\in[0,T]}{\inf}q(t)>0$ and $\beta=\underset{(x,t)\in D}{\inf}g(x,t)\geq0$.
   \end{lemma}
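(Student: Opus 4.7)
The plan is to decompose both $-L$ and $-L_h$ into their diffusion, convection and reaction parts and handle each piece separately, using integration by parts in the continuous case and its discrete analogue in the mesh case. Both inequalities then boil down to three observations: the negative diffusion operator is coercive with constant at least $\gamma$, the convection operator is skew-adjoint under the homogeneous Dirichlet boundary data and so contributes nothing to the diagonal quadratic form, and the multiplication-by-$g$ operator is bounded below by $\beta$.

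For the continuous estimate I would expand $(-Lu,u)=\int_{0}^{L_{1}}\bigl(-q(t)u_{xx}+p(t)u_{x}+g(x,t)u\bigr)u\,dx$ and treat each term in turn. A single integration by parts yields $-q(t)\int_{0}^{L_{1}}u_{xx}u\,dx=q(t)\|u_{x}\|_{L^{2}}^{2}\geq\gamma\|u_{x}\|_{L^{2}}^{2}$, the boundary term $[u_{x}u]_{0}^{L_{1}}$ vanishing by $u(0,t)=u(L_{1},t)=0$. Since $p$ is independent of $x$, the convection contribution equals $\frac{p(t)}{2}[u^{2}]_{0}^{L_{1}}=0$. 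Finally, the reaction term is bounded below by $\beta\|u\|_{L^{2}}^{2}$ directly from $g(x,t)\geq\beta$. Adding the three pieces gives the first inequality.

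For the discrete estimate the central algebraic identity is $\frac{1}{12h^{2}}[-U_{j+2}+16U_{j+1}-30U_{j}+16U_{j-1}-U_{j-2}]=\delta_{x}^{2}U_{j}-\frac{h^{2}}{12}\delta_{x}^{4}U_{j}$ with $\delta_{x}^{4}=\delta_{x}^{2}\circ\delta_{x}^{2}$, together with the analogous rewriting of the fourth-order central first difference as a linear combination of the standard and the stretched central differences. I would then invoke the two discrete summation-by-parts identities $(-\delta_{x}^{2}U,U)=\|\delta_{x}U\|_{L^{2}}^{2}$ and $(\delta_{x}^{4}U,U)=\|\delta_{x}^{2}U\|_{L^{2}}^{2}\geq 0$, so that the diffusion contribution becomes $q(t)\|\delta_{x}U\|_{L^{2}}^{2}+q(t)\frac{h^{2}}{12}\|\delta_{x}^{2}U\|_{L^{2}}^{2}\geq\gamma\|\delta_{x}U\|_{L^{2}}^{2}$. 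The skew-adjointness of each central difference under homogeneous boundary data annihilates the convection part, and the reaction part is bounded below by $\beta\|U\|_{L^{2}}^{2}$ exactly as in the continuous case.

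The main obstacle will be the rigorous bookkeeping of the boundary terms in the discrete summation by parts, since the compact five-point stencil formally calls the out-of-domain values $U_{-1}$ and $U_{M+1}$ when applied at $j=1$ and $j=M-1$. I would adopt the natural homogeneous ghost convention $U_{-1}=U_{M+1}=0$ (consistent with extending $u$ smoothly by zero past the Dirichlet boundary) so that all telescoping boundary contributions cancel cleanly; equivalently, one can restrict the discrete inner product sum to $2\leq j\leq M-2$, which matches the range over which (\ref{s1})--(\ref{s4}) are imposed, with the remaining surface terms absorbed into the boundary data supplied by (\ref{s5}). Under either interpretation the three estimates combine to give the stated coercivity of $-L_{h}$.
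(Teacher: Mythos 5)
Your argument is sound, but it follows a genuinely different route from the paper's. The paper never manipulates the five-point compact operator $L_{h}$ directly: it replaces $u_{2x}$ and $u_{x}$ by the ordinary differences $\delta_{x}^{2}u_{j}+O(h^{2})$ and $\delta_{x}u_{j-\frac{1}{2}}+O(h)$, performs a single discrete summation by parts together with the identity $a(a-b)=\frac{1}{2}[(a-b)^{2}+a^{2}-b^{2}]$, shows the convection contribution is the manifestly nonnegative quantity $\frac{1}{2}p(t)\bigl[h^{2}\sum_{j}(\delta_{x}u_{j-\frac{1}{2}})^{2}+(u_{M-1})^{2}\bigr]$ (this is where the hypothesis $p(t)\geq0$ is used), and then discards the $O(h)$ and $O(h^{2})$ remainders "for $h$ small''; both inequalities of the lemma are deduced from this single computation. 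You instead prove the continuous inequality by exact integration by parts, and the discrete one by the exact stencil decomposition $\frac{1}{12h^{2}}[-1,16,-30,16,-1]=\delta_{x}^{2}-\frac{h^{2}}{12}\delta_{x}^{4}$ together with $(-\delta_{x}^{2}U,U)=\|\delta_{x}U\|_{L^{2}}^{2}$, $(\delta_{x}^{4}U,U)\geq0$ and the antisymmetry of the centered first differences. Your version buys exactness (no neglected infinitesimals), coercivity of the operator that the stability proof actually uses, and it does not even need $p\geq0$ since the convection form vanishes identically; the price is the ghost-value bookkeeping you already flag, where you should note that under the convention $U_{-1}=U_{M+1}=0$ one gets $(\delta_{x}^{4}U,U)=\|\delta_{x}^{2}U\|_{L^{2}}^{2}+$ nonnegative boundary corrections rather than an exact equality, which is still sufficient since this term enters with a favorable sign. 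One small caution on the first inequality: the pairing $(\cdot,\cdot)$ and the norms in the lemma are the discrete ones of $(\ref{2aa})$, i.e.\ Riemann sums over the grid, so your continuous integration by parts proves the integral version of the estimate and still differs from the literal statement by quadrature errors --- but the paper's own proof commits exactly the same approximation, so you are no worse off on this point.
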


   \begin{proof}
    In this proof, we should prove that the linear operator: $-Lu=-q(t)u_{2x}+p(t)u_{x}+g(x,t)u,$ satisfies $(-Lu,u)\geq\gamma\|u_{x}\|_{L^{2}}^{2}
    +\beta\|u\|_{L^{2}}^{2},$ and then use the definition of the discrete $L^{2}$-norm given in relation $(\ref{2aa})$ to conclude. Since
     $\delta_{x}u_{j-\frac{1}{2}}(t)=\frac{u_{j}(t)-u_{j-1}(t)}{h}$ and $\delta_{x}^{2}u_{j}(t)=\frac{u_{j+1}(t)-2u_{j}(t)
    +u_{j-1}(t)}{h^{2}}=\frac{\delta_{x}u_{j+\frac{1}{2}}(t)-\delta_{x}u_{j-\frac{1}{2}}(t)}{h}$, applying the Taylor series expansion, it is easy to show
      that $u_{x}(x_{j},t)=\delta_{x}u_{j-\frac{1}{2}}(t)+O(h)$ and $u_{2x}(x_{j},t)=\delta_{x}^{2}u_{j}(t)+O(h^{2})$. Using the conditions
      $u_{0}(t)=u_{M}(t)=0$, for every $t\in[0,T]$ together with the summation by parts and the equality $a(a-b)=\frac{1}{2}[(a-b)^{2}+a^{2}-b^{2}]$, for
     every real numbers $a$ and $b$, simple calculations provide
    \begin{equation*}
    (-Lu(t),u(t))=-h\underset{j=1}{\overset{M-1}\sum}[q(t)(\delta_{x}^{2}u_{j}(t)+O(h^{2}))-p(t)(\delta_{x}u_{j-\frac{1}{2}}(t)+O(h))
    -g_{j}(t)u_{j}(t)]u_{j}(t)=
   \end{equation*}
   \begin{equation*}
    -hq(t)\underset{j=1}{\overset{M-1}\sum}(\delta_{x}^{2}u_{j}(t))u_{j}(t)+O(h^{2})+hp(t)\underset{j=1}{\overset{M-1}\sum}
    (\delta_{x}u_{j-\frac{1}{2}}(t))u_{j}(t)+O(h)+h\underset{j=1}{\overset{M-1}\sum}g_{j}(t)(u_{j}(t))^{2}=
   \end{equation*}
   \begin{equation*}
    -q(t)\left[(\delta_{x}u_{M-\frac{1}{2}}(t))u_{M-1}(t)-(\delta_{x}u_{\frac{1}{2}}(t))u_{1}(t)-\underset{j=1}{\overset{M-2}\sum}
    (\delta_{x}u_{j+\frac{1}{2}}(t))^{2}\right]+O(h^{2})+\frac{1}{2}p(t)\left[h^{2}\underset{j=1}{\overset{M-1}\sum}
    (\delta_{x}u_{j-\frac{1}{2}}(t))^{2}\right.
   \end{equation*}
   \begin{equation*}
    \left.+(u_{M-1}(t))^{2}-(u_{0}(t))^{2}\right]+O(h)+h\underset{j=1}{\overset{M-1}\sum}g_{j}(t)(u_{j}(t))^{2}=
    q(t)h\underset{j=0}{\overset{M-1}\sum}(\delta_{x}u_{j+\frac{1}{2}}(t))^{2}+O(h^{2})
   \end{equation*}
   \begin{equation}\label{91a}
    +\frac{1}{2}p(t)h^{2}\underset{j=1}{\overset{M}\sum}(\delta_{x}u_{j-\frac{1}{2}}(t))^{2}+O(h)+h\underset{j=1}{\overset{M-1}\sum}g_{j}(t)(u_{j}(t))^{2}.
   \end{equation}
   The last equality follows from $h\delta_{x}u_{M-\frac{1}{2}}(t)=-u_{M-1}(t)$ and $\delta_{x}u_{\frac{1}{2}}(t)=\frac{1}{h}u_{1}(t)$, since
    $u_{M}(t)=u_{0}(t)=0$. For small values of $h$, it holds
   \begin{equation}\label{91b}
    h\underset{j=0}{\overset{M-1}\sum}(\delta_{x}u_{j+\frac{1}{2}}(t))^{2}+O(h^{2})\approx h\underset{j=1}{\overset{M-1}\sum}(u_{x,j}(t))^{2}.
   \end{equation}
   Since $q(t)\geq\gamma>0$, $p(t)\geq0$, for any $t\in[0,T]$ and $\beta=\underset{(x,t)\in D}{\inf}g(x,t)\geq0,$ using this and substituting approximation
    $(\ref{91b})$ into relation $(\ref{91a})$ to obtain
   \end{proof}
    \begin{equation*}
    (-Lu(t),u(t))\geq\gamma h\underset{j=1}{\overset{M-1}\sum}(u_{x,j}(t))^{2}+\beta h\underset{j=1}{\overset{M-1}\sum}(u_{j}(t))^{2}=
    \gamma\|u_{x}(t)\|_{L^{2}}^{2}+\beta\|u(t)\|_{L^{2}}^{2}.
   \end{equation*}
   This ends the proof of the first estimate in Lemma $\ref{l3}$. Furthermore, since $u_{j}(t)=U_{j}(t)$, for $j=0,1,...,M$, for $h$ sufficiently small,
   neglecting the infinitesimal terms $O(h^{2})$ and $O(h)$, equation $(\ref{91a})$ implies
   \begin{equation*}
    (-Lu(t),u(t))\geq\gamma h\underset{j=1}{\overset{M-1}\sum}(\delta_{x}U_{j+\frac{1}{2}}(t))^{2}+\beta h\underset{j=1}{\overset{M-1}
    \sum}(U_{j}(t))^{2}=\gamma\|\delta_{x}U(t)\|_{L^{2}}^{2}+\beta\|U(t)\|_{L^{2}}^{2}.
   \end{equation*}
   The proof of Lemma $\ref{l3}$ is completed thanks to the definition of the discrete $L^{2}$-norm given in relation $(\ref{2aa})$.\\

   The following Lemmas (namely Lemmas $\ref{l4}$-$\ref{l5}$ taken in \cite{aaa}) help in proving Lemmas $\ref{l6}$.
   \begin{lemma}\cite{aaa}\label{l4}
    For every $i=1,2,3,...,$ and any $\lambda$ satisfying $0<\lambda<1$, the following inequalities are satisfied
    \begin{equation*}
    \frac{1}{2}<\mathcal{Z}_{i}<\frac{1}{2-\lambda}
   \end{equation*}
   where
   \begin{equation*}
    \mathcal{Z}_{i}=\frac{(i+\alpha)^{2-\lambda}-(i-1+\alpha)^{2-\lambda}-(2-\lambda)(i-1+\alpha)^{1-\lambda}}{(2-\lambda)[(i+\alpha)^{1-\lambda}
    -(i-1+\alpha)^{1-\lambda}]}.
   \end{equation*}
   \end{lemma}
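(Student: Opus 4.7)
The plan is to rewrite $\mathcal{Z}_i$ as a weighted average, which makes both inequalities transparent. Set $\mu = 2-\lambda \in (1,2)$, $a = i-1+\alpha$, and $b = i+\alpha = a+1$ (so $a > 0$ since $i \geq 1$ and $\alpha > 0$). By direct antidifferentiation,
$$b^{\mu-1} - a^{\mu-1} = (\mu-1)\int_a^b s^{\mu-2}\,ds, \qquad b^\mu - a^\mu - \mu a^{\mu-1} = \mu(\mu-1)\int_a^b (b-s)\,s^{\mu-2}\,ds,$$
where the second identity comes from writing $b^\mu - a^\mu - \mu a^{\mu-1} = \int_a^b \mu(t^{\mu-1} - a^{\mu-1})\,dt$, expanding $t^{\mu-1} - a^{\mu-1} = (\mu-1)\int_a^t s^{\mu-2}\,ds$, and swapping the order of integration. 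Substituting into the definition cancels the factor $\mu-1$ and yields
$$\mathcal{Z}_i = \frac{\int_a^b (b-s)\,s^{\mu-2}\,ds}{\int_a^b s^{\mu-2}\,ds} = b - \bar{s}, \qquad \bar{s} := \frac{\int_a^b s \cdot s^{\mu-2}\,ds}{\int_a^b s^{\mu-2}\,ds}.$$

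For the lower bound $\mathcal{Z}_i > \tfrac{1}{2}$, I would show $\bar{s} < (a+b)/2$. Setting $c = (a+b)/2$, I use the pointwise inequality $(s-c)(s^{\mu-2} - c^{\mu-2}) \leq 0$ on $[a,b]$ (strict off $s=c$), which holds because $s \mapsto s^{\mu-2}$ is strictly decreasing for $\mu-2 < 0$. Integrating gives $\int_a^b (s-c) s^{\mu-2}\,ds < c^{\mu-2}\int_a^b (s-c)\,ds = 0$, hence $\bar{s} < c$ and $\mathcal{Z}_i = b - \bar{s} > (b-a)/2 = 1/2$.

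For the upper bound $\mathcal{Z}_i < \tfrac{1}{2-\lambda} = \tfrac{1}{\mu}$, I clear denominators and substitute $b = a+1$. After straightforward algebraic reduction (expanding $(\mu-1)(b^\mu - a^\mu) > (\mu b - 1)(b^{\mu-1} - a^{\mu-1})$ and dividing by $a^{\mu-1}$), the inequality becomes
$$\mu - 1 > a\bigl[(1 + 1/a)^{\mu-1} - 1\bigr].$$
Since $\mu - 1 \in (0,1)$, Bernoulli's inequality in its strict concave form gives $(1+x)^{\mu-1} < 1 + (\mu-1)x$ for every $x > 0$. Applied with $x = 1/a$, this yields $a[(1+1/a)^{\mu-1}-1] < \mu - 1$, which is exactly what we need.

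The main obstacle is keeping track of the signs: $\mu-1$ is positive, $\mu-2$ is negative, and the divisions in the upper-bound reduction flip and un-flip inequalities. The integral reformulation of $\mathcal{Z}_i$ as $b - \bar{s}$ is the key simplification, because once the problem is phrased as a statement about a weighted mean, the lower bound is a one-line monotonicity argument and the upper bound reduces cleanly to Bernoulli's inequality; no delicate estimates on the remainder integrals $I_i^{(\alpha)}$, $J_j^{(i,\alpha)}$ from Section~\ref{sec2} are needed.
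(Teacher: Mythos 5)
Your proof is correct. Note first that the paper itself gives no proof of Lemma \ref{l4}: it is imported wholesale from \cite{aaa}, so there is no in-paper argument to compare against, and a self-contained verification like yours is genuinely useful. Your two identities $b^{\mu-1}-a^{\mu-1}=(\mu-1)\int_a^b s^{\mu-2}\,ds$ and $b^{\mu}-a^{\mu}-\mu a^{\mu-1}=\mu(\mu-1)\int_a^b(b-s)s^{\mu-2}\,ds$ check out (using $b-a=1$; the common factor that cancels is $\mu(\mu-1)$, not just $\mu-1$), and the resulting representation $\mathcal{Z}_i=b-\bar{s}$ with $\bar{s}$ the mean of $s$ under the decreasing weight $s^{\mu-2}$ is exactly the right normal form. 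The lower bound via the Chebyshev-type pointwise inequality $(s-c)(s^{\mu-2}-c^{\mu-2})\le 0$ is clean. For the upper bound, I verified that $\mathcal{Z}_i<1/\mu$ is indeed equivalent to $(\mu-1)(b^{\mu}-a^{\mu})>(\mu b-1)(b^{\mu-1}-a^{\mu-1})$ (this is just $\bar{s}>b-1/\mu$ after clearing denominators), and that substituting $b=a+1$ collapses it to $a\bigl[(1+1/a)^{\mu-1}-1\bigr]<\mu-1$, which is strict Bernoulli with exponent $\mu-1\in(0,1)$. The argument uses only $a=i-1+\alpha>0$, so it is actually more general than needed: it covers both the shift $\alpha=1-\lambda$ used in this paper and the shift $\sigma=1-\lambda/2$ of Alikhanov's original lemma, whose proof proceeds by comparable mean-value/monotonicity estimates rather than your weighted-average reformulation. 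Your closing remark is also right that the remainder integrals $I_i^{(\alpha)}$, $J_j^{(i,\alpha)}$ are irrelevant here; this lemma is purely about the coefficients.
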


   \begin{lemma}\cite{aaa}\label{l5}
   Suppose $\lambda$ be a positive number which is less than one. So, it holds
    \begin{equation*}
    \frac{1}{2-\lambda}\left[(l+\alpha)^{2-\lambda}-(l-1+\alpha)^{2-\lambda}\right]-\frac{1}{2}\left[(l+\alpha)^{1-\lambda}+(l-1+\alpha)^{1-\lambda}\right]>0,
   \end{equation*}
   for every integer $l\geq1$.
   \end{lemma}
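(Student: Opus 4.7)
The plan is to recognize this inequality as a quadrature statement. Setting $a=l-1+\alpha$ and $b=l+\alpha$, one has $b-a=1$ and $a\ge\alpha>0$ since $l\ge 1$ and $\alpha=1-\lambda\in(0,1)$. With $f(x)=x^{1-\lambda}$, an antiderivative is $\frac{x^{2-\lambda}}{2-\lambda}$, so
\begin{equation*}
\frac{1}{2-\lambda}\left[(l+\alpha)^{2-\lambda}-(l-1+\alpha)^{2-\lambda}\right]=\int_{a}^{b}f(x)\,dx,
\end{equation*}
while
\begin{equation*}
\frac{1}{2}\left[(l+\alpha)^{1-\lambda}+(l-1+\alpha)^{1-\lambda}\right]=\frac{b-a}{2}\bigl[f(a)+f(b)\bigr]
\end{equation*}
is exactly the trapezoidal-rule approximation of that integral. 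Hence the claim reduces to showing that the integral strictly exceeds its trapezoidal approximation on $[a,b]$.

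The next step is to exploit concavity of $f$ on $(0,\infty)$. Since $0<\lambda<1$, differentiating twice gives $f''(x)=-(1-\lambda)\lambda\,x^{-1-\lambda}<0$ for $x>0$, so $f$ is strictly concave on $[a,b]\subset(0,\infty)$. By the Hermite–Hadamard inequality (equivalently, by the elementary fact that a strictly concave function lies strictly above its chord and hence its integral exceeds the area under the chord), one obtains
\begin{equation*}
\int_{a}^{b}f(x)\,dx>\frac{b-a}{2}\bigl[f(a)+f(b)\bigr],
\end{equation*}
which is precisely the desired strict inequality.

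I do not anticipate any real obstacle. The only point that needs care is the positivity of the left endpoint $a=l-1+\alpha$, which is ensured by $l\ge 1$ and $\alpha>0$; without this, the concavity argument would fail at $x=0$. If a self-contained proof avoiding Hermite–Hadamard is preferred, an equivalent approach is to define $\varphi(t)=\int_{a}^{a+t}f(x)\,dx-\frac{t}{2}[f(a)+f(a+t)]$ for $t\in[0,1]$, verify $\varphi(0)=0$, and compute $\varphi'(t)=\frac{1}{2}[f(a+t)-f(a)]-\frac{t}{2}f'(a+t)$, which is strictly positive on $(0,1]$ by the strict concavity of $f$ (namely, $f(a+t)-f(a)>t\,f'(a+t)$ is the standard chord–tangent inequality for strictly concave functions). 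Either route terminates in one short line of calculus.
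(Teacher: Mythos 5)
Your proof is correct. Note that the paper itself states Lemma \ref{l5} without proof, importing it from \cite{aaa}, so there is no in-paper argument to compare against; your identification of the left-hand side as $\int_{a}^{b}x^{1-\lambda}\,dx$ minus its trapezoidal approximation, combined with the strict concavity of $x\mapsto x^{1-\lambda}$ on $(0,\infty)$ (guaranteed to apply since $a=l-1+\alpha\geq\alpha>0$), is a complete and standard way to establish the strict inequality, and it is essentially the integral-representation argument used in the cited source.
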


   \begin{lemma}\label{l6}
   Let $\lambda$ be a positive number such that $0<\lambda<\frac{2}{3}$. For any positive integer $i$, we introduce the generalized sequences
    $(a_{i+\frac{1}{2},l}^{\lambda\alpha})_{l}$ and $(a_{i+1,l}^{\lambda\alpha})_{l}$ with step size equals $\frac{1}{2}$ (that is,
    $l=\frac{1}{2},1,\frac{3}{2},2,...$) defined by:

   \begin{equation}\label{94}
    a_{\frac{1}{2},\frac{1}{2}}^{\lambda\alpha}=\widetilde{f}_{\frac{1}{2},0}^{\lambda\alpha},\text{\,\,\,and\,\,\,for\,\,\,}i\geq1,
    \text{\,\,\,\,}a_{i+\frac{1}{2},\frac{1}{2}}^{\lambda\alpha}=\widetilde{\text{\,\.{f}}}_{i+\frac{1}{2},0}^{\lambda\alpha},
   \end{equation}
   \begin{equation}\label{95}
    a_{i+\frac{1}{2},l}^{\lambda\alpha}=\begin{array}{c}
                                          \left\{
                                            \begin{array}{ll}
                                              \widetilde{d}_{i+\frac{1}{2},l-1}^{\lambda\alpha}-\widetilde{f}_{i+\frac{1}{2},l-1}^{\lambda\alpha},
                                              & \hbox{if $l=1,2,3,...,i$,} \\
                                              \text{\,}\\
                                              \widetilde{f}_{i+\frac{1}{2},l-\frac{1}{2}}^{\lambda\alpha}, & \hbox{if
                                                $l=\frac{1}{2},\frac{3}{2},\frac{5}{2},...,i-\frac{1}{2}$,} \\
                                               \text{\,}\\
                                              \widetilde{f}_{i+\frac{1}{2},i-1}^{\lambda\alpha}+\widetilde{f}_{i+\frac{1}{2},i}^{\lambda\alpha},
                                              & \hbox{if $l=i+\frac{1}{2}$,} \\
                                            \end{array}
                                          \right.
                                        \end{array}
   \end{equation}
    where the terms $\widetilde{f}_{\frac{1}{2},0}^{\lambda\alpha}$, $\widetilde{\text{\,\.{f}}}_{i+\frac{1}{2},0}^{\lambda\alpha}$,
    $\widetilde{f}_{i+\frac{1}{2},s}^{\lambda\alpha}$ and $\widetilde{d}_{i+\frac{1}{2},s}^{\lambda\alpha}$ (for $s=0,1,2,...,i$) are given by equations
    $(\ref{11})$, $(\ref{12})$ and $(\ref{22a})$. Furthermore,

    \begin{equation}\label{96a}
    a_{1,\frac{1}{2}}^{\lambda\alpha}=\widetilde{d}_{1,0}^{\lambda\alpha}-\widetilde{f}_{1,0}^{\lambda\alpha}\text{\,\,\,and\,\,\,}a_{1,1}^{\lambda\alpha}=
    \widetilde{f}_{1,0}^{\lambda\alpha}+\widetilde{f}_{1,1}^{\lambda\alpha},
   \end{equation}
   and for $i\geq1$,
   \begin{equation}\label{96}
    a_{i+1,l+\frac{1}{2}}^{\lambda\alpha}=\begin{array}{c}
                                          \left\{
                                            \begin{array}{ll}
                                              \widetilde{d}_{i+1,l}^{\lambda\alpha}-\widetilde{f}_{i+1,l}^{\lambda\alpha},
                                              & \hbox{if $l=0,1,2,3,...,i$,} \\
                                              \text{\,}\\
                                              \widetilde{f}_{i+1,l-\frac{1}{2}}^{\lambda\alpha}, & \hbox{if
                                                $l=\frac{1}{2},\frac{3}{2},\frac{5}{2},...,i-\frac{1}{2}$,} \\
                                               \text{\,}\\
                                              \widetilde{f}_{i+1,i}^{\lambda\alpha}+\widetilde{f}_{i+1,i+1}^{\lambda\alpha},
                                              & \hbox{if $l=i+\frac{1}{2}$,} \\
                                            \end{array}
                                          \right.
                                        \end{array}
   \end{equation}
    where the terms $\widetilde{d}_{i+1,s}^{\lambda\alpha}$ and $\widetilde{f}_{i+1,s}^{\lambda\alpha}$ (for $i\geq0$ and $s=0,1,2,...,i+1$) are defined by
    relation $(\ref{53})$. Thus, for $i\geq1$ the following estimates are satisfied, for $s=\frac{1}{2},1$,
    \begin{equation}\label{97}
     \widetilde{f}_{\frac{1}{2},0}^{\lambda\alpha},\text{\,}\widetilde{f}_{1,0}^{\lambda\alpha},\text{\,}\widetilde{d}_{1,0}^{\lambda\alpha},
     \text{\,}\widetilde{f}_{1,1}^{\lambda\alpha}, \widetilde{\text{\,\.{f}}}_{i+\frac{1}{2},0}^{\lambda\alpha},\text{\,}
      \text{\,}\widetilde{f}_{i+s,j}^{\lambda\alpha},\text{\,}\widetilde{d}_{i+s,j}^{\lambda\alpha},
     \text{\,}\widetilde{d}_{i+s,j}^{\lambda\alpha}-\widetilde{f}_{i+s,j}^{\lambda\alpha}>0,\text{\,\,}j=0,1,2,...,i\text{\,\,}(\text{\,resp.,\,\,}i+1),
   \end{equation}
    \begin{equation}\label{98}
     \text{\,}\widetilde{f}_{i+s,j-1}^{\lambda\alpha}+\widetilde{f}_{i+s,j}^{\lambda\alpha}-\widetilde{d}_{i+s,j}^{\lambda\alpha}<0,
      \text{\,}j=1,2,...,i\text{\,}(\text{\,resp.\,\,}i+1),\text{\,\,}2\widetilde{f}_{i+s,j}^{\lambda\alpha}-\widetilde{d}_{i+s,j}^{\lambda\alpha}>0,
      \text{\,\,}j=0,1,2,...,i\text{\,}(\text{\,resp.,\,\,}i+1).
   \end{equation}
    Furthermore,
     \begin{equation}\label{99}
     a_{i+s,l}^{\lambda\alpha}<a_{i+s,l+\frac{1}{2}}^{\lambda\alpha},\text{\,\,}l=\frac{1}{2},1,\frac{3}{2},...,i\text{\,\,}
     (\text{\,resp.,\,\,\,}i+\frac{1}{2}),
   \end{equation}
     and
    \begin{equation}\label{100}
     a_{i+s,l}^{\lambda\alpha}>\frac{(2-3\lambda)(1-\lambda)}{2(2-\lambda)}(i+s+\alpha-l)^{-\lambda},\text{\,\,}
     l=\frac{1}{2},1,\frac{3}{2},...,i+\frac{1}{2}\text{\,\,}(\text{\,resp.,\,\,\,}i+1).
   \end{equation}
   \end{lemma}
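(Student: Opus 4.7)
The plan is to reduce everything to elementary inequalities on the two functions $\phi_\lambda(m):=m^{1-\lambda}-(m-1)^{1-\lambda}$ and $\Phi_\lambda(m):=\frac{1}{2-\lambda}[m^{2-\lambda}-(m-1)^{2-\lambda}]$ evaluated at $m=i+s+\alpha-l$ (with the shift adapted to each subcase of the definition). In this notation $\widetilde d=\phi_\lambda(m)$ and $\widetilde f=2\Phi_\lambda(m)-\tfrac{1}{2}m^{1-\lambda}-\tfrac{3}{2}(m-1)^{1-\lambda}$, while Lemmas \ref{l4} and \ref{l5} rewrite as the sandwich $\tfrac{1}{2}[m^{1-\lambda}+(m-1)^{1-\lambda}]<\Phi_\lambda(m)<\tfrac{1}{2-\lambda}m^{1-\lambda}+\tfrac{1-\lambda}{2-\lambda}(m-1)^{1-\lambda}$.

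First I would dispatch (\ref{97}). Positivity of $\widetilde f_{\frac{1}{2},0}$, $\widetilde{\dot f}_{i+\frac{1}{2},0}$ and every $\widetilde d_{i+s,j}$ is immediate from $1-\lambda>0$. Substituting the lower bound of Lemma \ref{l5} into the formula for $\widetilde f$ gives the clean estimate $\widetilde f>\tfrac{1}{2}\phi_\lambda(m)>0$, which simultaneously proves the positivity of $\widetilde f$ and the inequality $2\widetilde f>\widetilde d$ of (\ref{98}). For $\widetilde d-\widetilde f>0$, I would rewrite the claim as $\Phi_\lambda(m)<\tfrac{3}{4}m^{1-\lambda}+\tfrac{1}{4}(m-1)^{1-\lambda}$; subtracting the Lemma \ref{l4} upper bound from this target yields an excess of $\tfrac{2-3\lambda}{4(2-\lambda)}\phi_\lambda(m)$, strictly positive exactly because $\lambda<2/3$. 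This is the first place the hypothesis is used. The sum inequality $\widetilde f_{i+s,j-1}+\widetilde f_{i+s,j}<\widetilde d_{i+s,j}$ in (\ref{98}) unfolds, after writing $\widetilde f$ at two consecutive indices, into a linear combination of $\Phi_\lambda(m)$ and $\Phi_\lambda(m+1)$ against $m^{1-\lambda}$, $(m-1)^{1-\lambda}$, and $(m+1)^{1-\lambda}$; applying Lemma \ref{l4} at both shifts and invoking the same $\tfrac{2-3\lambda}{4(2-\lambda)}$ gap closes it.

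With the sign data from (\ref{97})--(\ref{98}) in hand, the monotonicity (\ref{99}) is a case split on the parity of $l$: the transition $l\mapsto l+\tfrac{1}{2}$ switches between the three formulas of (\ref{95})--(\ref{96}), and each subcase reduces mechanically to one of the sign inequalities already proved, with the boundary case $l=i+\tfrac{1}{2}$ handled by positivity of the two $\widetilde f$-summands. The hard part is the lower bound (\ref{100}), whose constant $(2-3\lambda)(1-\lambda)/(2(2-\lambda))$ vanishes exactly at $\lambda=2/3$ and therefore has to be tight. My plan is again a case split on the form of $a_{i+s,l}^{\lambda\alpha}$. For the $\widetilde f$-subcase, I combine the bound $\widetilde f>\tfrac{1}{2}\phi_\lambda(m)>\tfrac{1-\lambda}{2}m^{-\lambda}$ (using concavity of $x^{1-\lambda}$ and the right-endpoint MVT bound $\phi_\lambda(m)>(1-\lambda)m^{-\lambda}$) with a ratio estimate $m^{-\lambda}\geq\tfrac{2-3\lambda}{2-\lambda}(m')^{-\lambda}$, where $m'=i+s+\alpha-l$ is the shift appearing in (\ref{100}); the ratio step reduces to the rational inequality $(1+\tfrac{1}{2m'})^\lambda<\tfrac{2-\lambda}{2-3\lambda}$, which is verified by monotonicity in $\lambda$ anchored at the endpoints $\lambda=0$ and $\lambda\to2/3$. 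For the $\widetilde d-\widetilde f$ subcase, the sharper gap $\widetilde d-\widetilde f\geq\tfrac{2-3\lambda}{4(2-\lambda)}\phi_\lambda(m)\geq\tfrac{(2-3\lambda)(1-\lambda)}{4(2-\lambda)}m^{-\lambda}$ already obtained while proving (\ref{97}) has the correct $\lambda$-order, and shift-matching completes the estimate. The principal obstacle is the bookkeeping of the four shifts across the branches of (\ref{95})--(\ref{96}) so that the constant is matched exactly and not lost by a factor of two.
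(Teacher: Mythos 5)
Your reduction to the two functions $\phi_\lambda$ and $\Phi_\lambda$ is a faithful reformulation of the paper's use of Lemma \ref{l4} (the sandwich you state is exactly $\frac12<\mathcal Z<\frac1{2-\lambda}$), and your treatment of \pref{97}, of the inequality $2\widetilde f-\widetilde d>0$, and of the lower bound \pref{100} matches the paper's argument in substance, including the correct appearance of the gap $\frac{2-3\lambda}{2(2-\lambda)}\phi_\lambda(m)$ for $\widetilde d-\widetilde f$. However, there is a genuine gap at the first inequality of \pref{98}, namely $\widetilde f_{i+s,j-1}+\widetilde f_{i+s,j}-\widetilde d_{i+s,j}<0$. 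Your plan — ``applying Lemma \ref{l4} at both shifts'' — provably cannot close it. Writing $m=i+s+\alpha-j$, the best you can extract from the sandwich is $\widetilde f(m+1)<\frac{2+\lambda}{2(2-\lambda)}\phi_\lambda(m+1)$ and $\widetilde d(m)-\widetilde f(m)>\frac{2-3\lambda}{2(2-\lambda)}\phi_\lambda(m)$, so the claim would follow only if $\phi_\lambda(m+1)/\phi_\lambda(m)<\frac{2-3\lambda}{2+\lambda}$; but the left side tends to $1$ as $m\to\infty$ while the right side is strictly less than $1$ for every $\lambda\in(0,\frac23)$ (e.g.\ it equals $0.2$ at $\lambda=\frac12$). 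The target inequality is asymptotically tight — numerically, for $\lambda=\frac12$ and $m\approx 10$ the left side of \pref{98} is of order $-10^{-4}$ — so any proof must capture the second-order (curvature) term. This is why the paper represents $\frac{2}{2-\lambda}[(m+1)^{2-\lambda}-(m-1)^{2-\lambda}]$ as $2\int_0^2(\cdot)^{1-\lambda}$ and the discrete second difference as $-\lambda(1-\lambda)\int_0^1\!\int_0^1(\cdot)^{-1-\lambda}$, and then runs a separate ratio estimate of the form $\frac47\bigl(1+\frac{1}{m}\bigr)^{1-\lambda}<1$; a one-shot application of the sandwich discards exactly the cancellation that makes the sign come out negative.

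A second, smaller omission: in \pref{99} the transition $l=\frac12\mapsto1$ for the sequence $(a_{i+\frac12,\cdot})$ compares $a_{i+\frac12,\frac12}=\widetilde{\text{\,\.{f}}}_{i+\frac12,0}^{\lambda\alpha}=(i+\frac12+\alpha)^{1-\lambda}-(i+\alpha)^{1-\lambda}$ with $a_{i+\frac12,1}=\widetilde d_{i+\frac12,0}-\widetilde f_{i+\frac12,0}$. The quantity $\widetilde{\text{\,\.{f}}}_{i+\frac12,0}^{\lambda\alpha}$ is a half-step first difference that appears in none of the sign inequalities \pref{97}--\pref{98}, so this case does not ``reduce mechanically'' to them; the paper devotes a separate (and itself rather delicate) argument to it via the integral mean value theorem and a constraint on the mean-value point. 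Your proposal needs an explicit argument for this boundary case as well.
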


    \begin{proof}
     It follows from relations $(\ref{22a})$ and $(\ref{53})$ that
     \begin{equation*}
     \widetilde{f}_{\frac{1}{2},0}^{\lambda\alpha}=(\frac{1}{2}+\alpha)^{1-\lambda}>0,\text{\,}\widetilde{\text{\,\.{f}}}_{i+\frac{1}{2},0}^{\lambda\alpha}
     =(i+\frac{1}{2}+\alpha)^{1-\lambda}-(i+\alpha)^{1-\lambda}>0,\text{\,\,}\widetilde{f}_{i+\frac{1}{2},i}^{\lambda\alpha}=
      \widetilde{f}_{i+1,i+1}^{\lambda\alpha}=\alpha^{1-\lambda}>0.
   \end{equation*}
    In addition, it comes from equations $(\ref{11})$, $(\ref{12})$ and $(\ref{53})$ that
    \begin{equation*}
     0<\widetilde{d}_{1,0}^{\lambda\alpha}=(1+\alpha)^{1-\lambda}-\alpha^{1-\lambda},\text{\,\,} 0<\widetilde{d}_{i+1,l}^{\lambda\alpha}=
     (i+1+\alpha-l)^{1-\lambda}-(i+\alpha-l)^{1-\lambda}=\widetilde{d}_{i+\frac{1}{2},l-1}^{\lambda\alpha},
    \text{\,\,}l=1,2,3,...,i,
   \end{equation*}
    \begin{equation}\label{101}
     \widetilde{f}_{i+1,l}^{\lambda\alpha}=\frac{2}{2-\lambda}[(i+1+\alpha-l)^{2-\lambda}-(i+\alpha-l)^{2-\lambda}]-
    \frac{1}{2}[(i+1+\alpha-l)^{1-\lambda}+3(i+\alpha-l)^{1-\lambda}]=\widetilde{f}_{i+\frac{1}{2},l-1}^{\lambda\alpha},\text{\,\,}l=1,2,...,i+1,
   \end{equation}
    \begin{equation}\label{102}
    \widetilde{d}_{i+1,0}^{\lambda\alpha}=(i+1+\alpha)^{1-\lambda}-(i+\alpha)^{1-\lambda}>0.
   \end{equation}
    We should show that $\widetilde{f}_{i+1,l}^{\lambda\alpha}>0$ and $\widetilde{d}_{i+1,l}^{\lambda\alpha}-\widetilde{f}_{i+1,l}^{\lambda\alpha}>0$,
    for $0\leq l\leq i+1$, and use this to end the proof of estimates  given in relation $(\ref{97})$.
    \begin{equation*}
    2\mathcal{Z}_{i+1-l}-\frac{1}{2}=2\frac{(i+1+\alpha-l)^{2-\lambda}-(i+\alpha-l)^{2-\lambda}-(2-\lambda)(i+\alpha-l)^{1-\lambda}}{(2-\lambda)
    [(i+1+\alpha-l)^{1-\lambda}-(i+\alpha-l)^{1-\lambda}]}-\frac{1}{2}=
   \end{equation*}
    \begin{equation*}
    \frac{2}{2-\lambda}\frac{(i+1+\alpha-l)^{2-\lambda}-(i+\alpha-l)^{2-\lambda}}{(i+1+\alpha-l)^{1-\lambda}-(i+\alpha-l)^{1-\lambda}}
    -\frac{(i+1+\alpha-l)^{1-\lambda}+3(i+\alpha-l)^{1-\lambda}}{2[(i+1+\alpha-l)^{1-\lambda}-(i+\alpha-l)^{1-\lambda}]}=
   \end{equation*}
    \begin{equation}\label{106}
    [(i+1+\alpha-l)^{1-\lambda}-(i+\alpha-l)^{1-\lambda}]^{-1}\widetilde{f}_{i+1,l}^{\lambda\alpha}.
   \end{equation}
    But it comes from Lemma $\ref{l4}$ that $\mathcal{Z}_{i+1-l}>\frac{1}{2}$, this implies $2\mathcal{Z}_{i+1-l}-\frac{1}{2}>0$. Thus,
    $\widetilde{f}_{1,0}^{\lambda\alpha}>0$, $\widetilde{f}_{1,1}^{\lambda\alpha}>0$, $\widetilde{f}_{i+1,0}^{\lambda\alpha}>0$ and
    $\widetilde{f}_{i+1,l}^{\lambda\alpha}=\widetilde{f}_{i+\frac{1}{2},l-1}^{\lambda\alpha}>0$, for $l=1,2,...,i+1$.\\

    In a similar way, it is easy to show that
    \begin{equation}\label{105}
    \frac{3}{2}-2\mathcal{Z}_{i+1-l}=[(i+1+\alpha-l)^{1-\lambda}-(i+\alpha-l)^{1-\lambda}]^{-1}(\widetilde{d}_{i+1,l}^{\lambda\alpha}-
    \widetilde{f}_{i+1,l}^{\lambda\alpha}).
   \end{equation}
    Since $\frac{3}{2}-\frac{2}{2-\lambda}=\frac{2-3\lambda}{2(2-\lambda)}>0$, for $0<\lambda<\frac{2}{3}$. This fact together with from Lemma
     $\ref{l4}$ provide $\frac{3}{2}-2\mathcal{Z}_{i+1-l}>\frac{2}{2-\lambda}-2\mathcal{Z}_{i+1-l}>0$. Hence,
     \begin{equation*}
    \widetilde{d}_{i+1,0}^{\lambda\alpha}-\widetilde{f}_{i+1,0}^{\lambda\alpha}>0,\text{\,\,and\,\,}\widetilde{d}_{i+1,l}^{\lambda\alpha}-
    \widetilde{f}_{i+1,l}^{\lambda\alpha}=\widetilde{d}_{i+\frac{1}{2},l-1}^{\lambda\alpha}-\widetilde{f}_{i+\frac{1}{2},l-1}^{\lambda\alpha}>0,\text{\,\,}
     l=1,2,...,i+1.
   \end{equation*}
    This ends the proof of relation $(\ref{97})$. Now, we should prove the inequalities in $(\ref{98})$.\\

    Firstly, it is not hard to observe that
     \begin{equation*}
    -\lambda(1-\lambda)\int_{0}^{1}\int_{0}^{1}(i+1+y_{1}+y_{2}+\alpha-j)^{-1-\lambda}dy_{1}dy_{2}=(i+2+\alpha-j)^{1-\lambda}
    -2(i+1+\alpha-j)^{1-\lambda}+(i+\alpha-j)^{1-\lambda}.
   \end{equation*}
    Using this together with equation $(\ref{53})$, simple calculations give
    \begin{equation*}
    \widetilde{f}_{i+1,j-1}^{\lambda\alpha}+\widetilde{f}_{i+1,j}^{\lambda\alpha}-\widetilde{d}_{i+1,j}^{\lambda\alpha}=
    \frac{2}{2-\lambda}[(i+2+\alpha-j)^{2-\lambda}-(i+\alpha-j)^{2-\lambda}]-\frac{1}{2}[(i+2+\alpha-j)^{1-\lambda}-2(i+1+\alpha-j)^{1-\lambda}+
   \end{equation*}
    \begin{equation*}
    (i+\alpha-j)^{1-\lambda}]-4(i+1+\alpha-j)^{1-\lambda}=2\int_{0}^{2}(i+z_{1}+\alpha-j)^{1-\lambda}dz_{1}+\frac{\lambda(1-\lambda)}{2}\int_{0}^{1}
    \int_{0}^{1}(i+y_{1}+y_{2}+\alpha-j)^{-1-\lambda}dy_{1}dy_{2}
   \end{equation*}
    \begin{equation*}
    -4(i+1+\alpha-j)^{1-\lambda}\leq 2(i+2+\alpha-j)^{1-\lambda}-\frac{7}{2}(i+1+\alpha-j)^{1-\lambda}+\frac{\lambda(1-\lambda)}{2}
     (i+2+\alpha-j)^{-1-\lambda}-\frac{1}{2}(i+1+\alpha-j)^{1-\lambda}<0,
   \end{equation*}
    since $2(i+2+\alpha-j)^{1-\lambda}-\frac{7}{2}(i+1+\alpha-j)^{1-\lambda}$, $\frac{\lambda(1-\lambda)}{2}(i+2+\alpha-j)^{-1-\lambda}
    -\frac{1}{2}(i+1+\alpha-j)^{1-\lambda}<0$. Indeed,
    \begin{equation*}
    \frac{4}{7}\left(\frac{i+2+\alpha-j}{i+1+\alpha-j}\right)^{1-\lambda}=\frac{4}{7}\left(1+\frac{1}{i+1+\alpha-j}\right)^{1-\lambda}<
    \frac{4}{7}\left(1+\frac{1-\lambda}{i+1+\alpha-j}\right)\leq \frac{4}{7}\left(1+\frac{1-\lambda}{1+\alpha}\right)<1,
   \end{equation*}
   since $j\leq i$, $\alpha=1-\lambda$ and $0<\lambda<\frac{2}{3}$. So, estimate $2(i+2+\alpha-j)^{1-\lambda}-\frac{7}{2}(i+1+\alpha-j)^{1-\lambda}<0,$
   is holds for every $j=0,1,...,i$. On the other hand, $\frac{\lambda(1-\lambda)}{2}(i+2+\alpha-j)^{-1-\lambda}\leq\frac{\lambda(1-\lambda)}{2}
   (2+\alpha)^{-1-\lambda}<\frac{1}{2}$ and $\frac{1}{2}(i+1+\alpha-j)^{1-\lambda}>\frac{1}{2}(1+\alpha)^{1-\lambda}>\frac{1}{2}$, for $j=0,1,...,i$. Then
    $\frac{\lambda(1-\lambda)}{2}(i+2+\alpha-j)^{-1-\lambda}-\frac{1}{2}(i+1+\alpha-j)^{1-\lambda}<0$, for all $j=0,1,...,i$. Thus,
    \begin{equation}\label{107}
    \widetilde{f}_{i+1,j-1}^{\lambda\alpha}+\widetilde{f}_{i+1,j}^{\lambda\alpha}-\widetilde{d}_{i+1,j}^{\lambda\alpha}<0,\text{\,\,\,for\,\,\,}
    j=0,1,...,i.
   \end{equation}
    But, for $j=1,2,...,i$, plugging equation $(\ref{101})$ and estimate $(\ref{107})$ yields
    \begin{equation*}
    \widetilde{f}_{i+\frac{1}{2},j-1}^{\lambda\alpha}+\widetilde{f}_{i+\frac{1}{2},j}^{\lambda\alpha}-\widetilde{d}_{i+\frac{1}{2},j}^{\lambda\alpha}<0,
    \text{\,\,\,for\,\,\,}j=1,2,...,i.
   \end{equation*}
    Furthermore, combining $(\ref{53})$ and Lemma $\ref{l5}$, we obtain
   \begin{equation*}
    2\widetilde{f}_{i+1,j}^{\lambda\alpha}-\widetilde{d}_{i+1,j}^{\lambda\alpha}=\frac{4}{2-\lambda}[(i+1+\alpha-j)^{2-\lambda}-(i+\alpha-j)^{2-\lambda}]
    -2[(i+1+\alpha-j)^{1-\lambda}+(i+\alpha-j)^{1-\lambda}]
   \end{equation*}
    \begin{equation*}
    =4\left\{\frac{1}{2-\lambda}[(i+1+\alpha-j)^{2-\lambda}-(i+\alpha-j)^{2-\lambda}]-\frac{1}{2}[(i+1+\alpha-j)^{1-\lambda}
    +(i+\alpha-j)^{1-\lambda}]\right\}>0,\text{\,}l=0,1,...,i.
   \end{equation*}
   The last estimate comes from Lemma $\ref{l5}$. Utilizing this inequality together with relation $(\ref{101})$, it holds
    \begin{equation*}
    2\widetilde{f}_{i+\frac{1}{2},j}^{\lambda\alpha}-\widetilde{d}_{i+\frac{1}{2},j}^{\lambda\alpha}>0,\text{\,\,\,for\,\,\,}l=0,1,...,i-1.
   \end{equation*}
    This completes the proof of relation $(\ref{98})$.\\

    Finally, we should prove the first estimate in relation $(\ref{99})$. The proof of the second one is similar.\\

    Firstly, it comes from the expression of $a_{i+\frac{1}{2},\frac{1}{2}}^{\lambda\alpha}$ and $a_{i+\frac{1}{2},1}^{\lambda\alpha}$, that
    \begin{equation*}
    a_{i+\frac{1}{2},\frac{1}{2}}^{\lambda\alpha}-a_{i+\frac{1}{2},1}^{\lambda\alpha}=\widetilde{\dot{f}}_{i+\frac{1}{2},0}^{\lambda\alpha}+
    \widetilde{f}_{i+\frac{1}{2},0}^{\lambda\alpha}-\widetilde{d}_{i+\frac{1}{2},0}^{\lambda\alpha}.
   \end{equation*}
    Utilizing equation $(\ref{12})$ and $(\ref{22a})$, this becomes
    \begin{equation*}
    a_{i+\frac{1}{2},\frac{1}{2}}^{\lambda\alpha}-a_{i+\frac{1}{2},1}^{\lambda\alpha}=\frac{-2}{2-\lambda}[(i+\alpha)^{2-\lambda}-(i+\alpha-1)^{2-\lambda}]
    +\frac{1}{2}[(i+\alpha)^{1-\lambda}+3(i+\alpha-1)^{1-\lambda}]+(i+\frac{1}{2}+\alpha)^{1-\lambda}-(i+\alpha)^{1-\lambda}
   \end{equation*}
    \begin{equation*}
    =-2\int_{0}^{1}(i+\alpha+y_{1}-1)^{1-\lambda}dy_{1}-\frac{1}{2}[(i+\alpha)^{1-\lambda}-3(i+\alpha-1)^{1-\lambda}]+(i+\frac{1}{2}+\alpha)^{1-\lambda}.
   \end{equation*}
    Applying the integral mean value theorem, there exists $y^{\alpha}\in(0,1)$ such that, $\int_{0}^{1}(i+\alpha+y_{1}-1)^{1-\lambda}dy_{1}=
    (i+\alpha+y^{\alpha}-1)^{1-\lambda}$. Using this and assuming that $4y^{\alpha}\leq 3$, it holds
    \begin{equation*}
    a_{i+\frac{1}{2},\frac{1}{2}}^{\lambda\alpha}-a_{i+\frac{1}{2},1}^{\lambda\alpha}=-2(i+\alpha+y^{\alpha}-1)^{1-\lambda}+\frac{3}{2}(i+\alpha-1)^{1-\lambda}
    -\frac{1}{2}(i+\alpha)^{1-\lambda}+(i+\frac{1}{2}+\alpha)^{1-\lambda}=
   \end{equation*}
    \begin{equation*}
    (1-\lambda)\left[\int_{y^{\alpha}}^{\frac{3}{2}}(i+\alpha+y_{1}-1)^{-\lambda}dy_{1}-\int_{0}^{y^{\alpha}}(i+\alpha+y_{1}-1)^{-\lambda}dy_{1}-\frac{1}{2}
    \int_{0}^{1}(i+\alpha+y_{1}-1)^{-\lambda}dy_{1}\right]\leq
   \end{equation*}
    \begin{equation*}
    (1-\lambda)\left[(\frac{3}{2}-y^{\alpha})(i+\alpha+y^{\alpha}-1)^{-\lambda}-y^{\alpha}(i+\alpha+y^{\alpha}-1)^{-\lambda}-\frac{1}{2}
    (i+\alpha)^{-\lambda}\right]=
   \end{equation*}
    \begin{equation*}
    (1-\lambda)\left[(\frac{3}{2}-2y^{\alpha})(i+\alpha+y^{\alpha}-1)^{-\lambda}-\frac{1}{2}(i+\alpha)^{-\lambda}\right]=
    \frac{1-\lambda}{2(i+\alpha)^{\lambda}}\left[(3-4y^{\alpha})\left(\frac{i+\alpha}{i+\alpha+y^{\alpha}-1}\right)^{\lambda}-1\right]=
   \end{equation*}
   \begin{equation*}
    \frac{1-\lambda}{2(i+\alpha)^{\lambda}}\left[\underset{>0}{(\underbrace{3-4y^{\alpha}})}\left(1+\frac{1-y^{\alpha}}{i+\alpha+
    y^{\alpha}-1}\right)^{\lambda}-1\right]<\frac{1-\lambda}{2(i+\alpha)^{\lambda}}\left[\underset{>0}{(\underbrace{3-4y^{\alpha}})}
    \left(1+\frac{\lambda(1-y^{\alpha})}{i+\alpha+(y^{\alpha}-1}\right)-1\right]=
   \end{equation*}
    \begin{equation*}
    -\frac{\alpha(\alpha+y^{\alpha})}{2(i+\alpha)^{\lambda}}\left[4\alpha(y^{\alpha})^{2}+(5-3\alpha)y^{\alpha}-3+\alpha\right]\leq0,
   \end{equation*}
    for values of $y^{\alpha}$ satisfying
    \begin{equation}\label{107a}
    \max\left\{0,\frac{1}{8\alpha}\left(-5+3\alpha+\sqrt{-7\alpha^{2}+18\alpha+25}\right)\right\}\leq y^{\alpha}\leq \frac{3}{4},
   \end{equation}
    where $\alpha=1-\lambda$ and $\lambda\in(0,\frac{2}{3})$. In fact, since $y^{\alpha}\in(0,1)$, without loss of this generality, we can assume that
    $y^{\alpha}$ satisfies estimate $(\ref{107a})$. Thus
    \begin{equation*}
    a_{i+\frac{1}{2},\frac{1}{2}}^{\lambda\alpha}<a_{i+\frac{1}{2},1}^{\lambda\alpha}.
   \end{equation*}
    For $l=1,\frac{3}{2},2,\frac{5}{2},...,i-\frac{1}{2}$, if $l$ is an integer, it follows from relation $(\ref{95})$ that
    \begin{equation*}
    a_{i+\frac{1}{2},l}^{\lambda\alpha}=\widetilde{d}_{i+\frac{1}{2},l-1}^{\lambda\alpha}-\widetilde{f}_{i+\frac{1}{2},l-1}^{\lambda\alpha}\text{\,\,\,and\,\,\,}
    a_{i+\frac{1}{2},l+\frac{1}{2}}^{\lambda\alpha}=\widetilde{f}_{i+\frac{1}{2},l-1}^{\lambda\alpha}.
   \end{equation*}
   Utilizing the second inequality in $(\ref{98})$, it is easy to see that
   \begin{equation*}
    a_{i+\frac{1}{2},l}^{\lambda\alpha}-a_{i+\frac{1}{2},l+\frac{1}{2}}^{\lambda\alpha}=\widetilde{d}_{i+\frac{1}{2},l-1}^{\lambda\alpha}-
    2\widetilde{f}_{i+\frac{1}{2},l-1}^{\lambda\alpha}<0.
   \end{equation*}
   If $l$ is not an integer, relation $(\ref{95})$ provides
   \begin{equation*}
    a_{i+\frac{1}{2},l}^{\lambda\alpha}=\widetilde{f}_{i+\frac{1}{2},l-\frac{3}{2}}^{\lambda\alpha}\text{\,\,\,and\,\,\,}a_{i+\frac{1}{2},l+\frac{1}{2}}^{\lambda\alpha}=
    \widetilde{d}_{i+\frac{1}{2},l-\frac{1}{2}}^{\lambda\alpha}-\widetilde{f}_{i+\frac{1}{2},l-\frac{1}{2}}^{\lambda\alpha}.
   \end{equation*}
    Hence
    \begin{equation*}
    a_{i+\frac{1}{2},l}^{\lambda\alpha}-a_{i+\frac{1}{2},l+\frac{1}{2}}^{\lambda\alpha}=\widetilde{f}_{i+\frac{1}{2},l-\frac{3}{2}}^{\lambda\alpha}+
    \widetilde{f}_{i+\frac{1}{2},l-\frac{1}{2}}^{\lambda\alpha}-\widetilde{d}_{i+\frac{1}{2},l-\frac{1}{2}}^{\lambda\alpha}<0.
   \end{equation*}
    The last estimate comes from the first inequality in $(\ref{98})$. Furthermore,
    \begin{equation*}
    a_{i+\frac{1}{2},i}^{\lambda\alpha}-a_{i+\frac{1}{2},i+\frac{1}{2}}^{\lambda\alpha}=\widetilde{d}_{i+\frac{1}{2},i-1}^{\lambda\alpha}
    -2\widetilde{f}_{i+\frac{1}{2},i-1}^{\lambda\alpha}-\widetilde{f}_{i+\frac{1}{2},-}^{\lambda\alpha}<0.
   \end{equation*}
    In a similar manner, Using relations $(\ref{96})$ and $(\ref{98})$, one easily shows that,
    \begin{equation*}
    a_{i+1,l}^{\lambda\alpha}-a_{i+1,l+\frac{1}{2}}^{\lambda\alpha}<0,
   \end{equation*}
    for $l=\frac{1}{2},1,\frac{3}{2},2,\frac{5}{2},...,i+\frac{1}{2}.$ This completes the proof of relation $(\ref{99})$. To end, we should prove the
    second inequality in relation $(\ref{100})$. The proof of the first one is similar.\\

    Combining equation $(\ref{105})$ and the last equation in $(\ref{97})$, it holds
    \begin{equation*}
    \left(\frac{3}{2}-2\mathcal{Z}_{i+1-l}\right)[(i+1+\alpha-l)^{1-\lambda}-(i+\alpha-l)^{1-\lambda}]=\widetilde{d}_{i+1,l}^{\lambda\alpha}-
    \widetilde{f}_{i+1,l}^{\lambda\alpha}=a_{i+1,l+\frac{1}{2}}^{\lambda\alpha},
   \end{equation*}
   if $l$ is an integer satisfying $0\leq l\leq i$. This fact together with Lemma $\ref{l4}$ result in
   \begin{equation}\label{109}
    a_{i+1,l+\frac{1}{2}}^{\lambda\alpha}>\left(\frac{3}{2}-\frac{2}{2-\lambda}\right)(1-\lambda)\int_{0}^{1}(i+\alpha+y_{1}-l)^{-\lambda}dy_{1}=
    \frac{(2-3\lambda)(1-\lambda)}{2(2-\lambda)}(i+1+\alpha-l)^{-\lambda},
   \end{equation}
    for $l=0,1,...,i$. Now, if $l=\frac{1}{2},\frac{3}{2},\frac{5}{2},...,i+\frac{1}{2}$, plugging equation $(\ref{106})$ and Lemma $\ref{l4}$, it is not
    difficult to observe that
     \begin{equation*}
    \left(2\mathcal{Z}_{i+1-(l-\frac{1}{2})}-\frac{1}{2}\right)[(i+1+\alpha-(l-\frac{1}{2}))^{1-\lambda}-(i+\alpha-(l-\frac{1}{2}))^{1-\lambda}]=
    \widetilde{f}_{i+1,l-\frac{1}{2}}^{\lambda\alpha}=a_{i+1,l+\frac{1}{2}}^{\lambda\alpha}.
   \end{equation*}
    Since $2\mathcal{Z}_{i+1-(l-\frac{1}{2})}>1$, this implies
    \begin{equation*}
    a_{i+1,l+\frac{1}{2}}^{\lambda\alpha}>\frac{1-\lambda}{2}\int_{0}^{1}(i+\alpha+y_{1}-(l-\frac{1}{2}))^{-\lambda}dy_{1}\geq
    \frac{1-\lambda}{2}(i+1+\alpha-(l-\frac{1}{2}))^{-\lambda}>\frac{(2-3\lambda)(1-\lambda)}{2(2-\lambda)}(i+\frac{3}{2}+\alpha-l)^{-\lambda}.
   \end{equation*}
   The last estimate follows from $1>\frac{2-3\lambda}{2-\lambda}>0$, for any $0<\lambda<\frac{2}{3}.$ Thus,
    \begin{equation}\label{110}
    \widetilde{f}_{i+1,l-\frac{1}{2}}^{\lambda\alpha}=a_{i+1,l+\frac{1}{2}}^{\lambda\alpha}>\frac{(2-3\lambda)(1-\lambda)}{2(2-\lambda)}(i+\frac{3}{2}+\alpha-l)^{-\lambda},
   \end{equation}
    $l=\frac{1}{2},\frac{3}{2},\frac{5}{2},...,i+\frac{1}{2}$. Since $\alpha=1-\lambda$ and $1>\frac{2-3\lambda}{2-\lambda}>0$, for any
    $0<\lambda<\frac{2}{3}$, a combination of $(\ref{53})$, $(\ref{96})$ and $(\ref{110})$ gives
     \begin{equation*}
    a_{i+1,i+1}^{\lambda\alpha}=\widetilde{f}_{i+1,i}^{\lambda\alpha}+\widetilde{f}_{i+1,i+1}^{\lambda\alpha}>
    \frac{(2-3\lambda)(1-\lambda)}{2(2-\lambda)}(1+\alpha)^{-\lambda}+\alpha^{1-\lambda}>\alpha\alpha^{-\lambda}>\frac{1-\lambda}{2}\alpha^{-\lambda}>
   \end{equation*}
    \begin{equation}\label{112}
     \frac{(2-3\lambda)(1-\lambda)}{2(2-\lambda)}\alpha^{-\lambda}.
   \end{equation}
    An assembly of estimates $(\ref{109})$-$(\ref{112})$ ends the proof of the second inequality in relation $(\ref{100})$. In a similar way,
    one easily show the first estimate in $(\ref{100})$. This completes the proof of Lemma $\ref{l6}$.
    \end{proof}

    The following Lemma (Lemma $\ref{l7}$) plays a crucial role when proving Lemma $\ref{l8}$.
   \begin{lemma}\label{l7}
    Consider the generalized sequences $(a_{\cdot,l}^{\lambda\alpha})_{l}$ defined by relations $(\ref{94})$-$(\ref{96})$. For any mesh function $w$
    defined on the grid space $\mathcal{Y}_{kh}$, it holds
     \begin{equation*}
    \underset{l=l_{0}}{\overset{m}\sum}(a_{\cdot,l+\frac{1}{2}}^{\lambda\alpha})^{-1}[(w^{l+\frac{1}{2}})^{2}-(w^{l})^{2}]=
    (a_{\cdot,m}^{\lambda\alpha})^{-1}(w^{m})^{2}-(a_{\cdot,l_{0}}^{\lambda\alpha})^{-1}(w^{l_{0}})^{2}+
     \underset{l=l_{0}}{\overset{m-\frac{1}{2}}\sum}[(a_{\cdot,l+\frac{1}{2}}^{\lambda\alpha})^{-1}-(a_{\cdot,l+1}^{\lambda\alpha})^{-1}]
     (w^{l+\frac{1}{2}})^{2},
   \end{equation*}
    for $m\in\{i,i+\frac{1}{2}\}$ and $l=l_{0},l_{0}+\frac{1}{2},l_{0}+1,l_{0}+\frac{3}{2},...,m$, where $l_{0}$ is a nonnegative integer that satisfies
    $l_{0}\leq m$.
   \end{lemma}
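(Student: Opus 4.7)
The identity is a discrete summation by parts (Abel summation) for a sequence indexed with step $\frac{1}{2}$, so the plan is to perform the standard rearrangement while carefully tracking the half-integer indexing.

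First I would split the left-hand side as
\[
\sum_{l=l_{0}}^{m}(a_{\cdot,l+\frac{1}{2}}^{\lambda\alpha})^{-1}(w^{l+\frac{1}{2}})^{2}-\sum_{l=l_{0}}^{m}(a_{\cdot,l+\frac{1}{2}}^{\lambda\alpha})^{-1}(w^{l})^{2},
\]
and in the first of these two sums perform the index shift $l\mapsto l-\frac{1}{2}$. This produces two sums whose running index now meets the same set of values $(w^{\cdot})^{2}$, but whose coefficients are offset by one half-step in the sequence $(a_{\cdot,\cdot}^{\lambda\alpha})^{-1}$. Subtracting term by term, the interior differences collapse into the telescoping sum $\sum_{l=l_{0}}^{m-\frac{1}{2}}\bigl[(a_{\cdot,l+\frac{1}{2}}^{\lambda\alpha})^{-1}-(a_{\cdot,l+1}^{\lambda\alpha})^{-1}\bigr](w^{l+\frac{1}{2}})^{2}$ on the right-hand side, while the two endpoint terms that are peeled off when the shifted sum is aligned with the unshifted one yield the boundary contributions $(a_{\cdot,m}^{\lambda\alpha})^{-1}(w^{m})^{2}$ and $-(a_{\cdot,l_{0}}^{\lambda\alpha})^{-1}(w^{l_{0}})^{2}$.

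The computation is purely algebraic; in particular, it does not invoke the positivity or monotonicity properties of $a_{\cdot,\cdot}^{\lambda\alpha}$ established in Lemma \ref{l6}. The main difficulty is entirely book-keeping: as $l$ runs in steps of $\frac{1}{2}$ between $l_{0}$ and $m$, with $m$ possibly integer or half-integer and the sequence $a_{\cdot,\cdot}^{\lambda\alpha}$ defined piecewise on integer versus half-integer indices through \pref{95}--\pref{96}, one must verify that after the shift the surviving coefficients are precisely $(a_{\cdot,l+\frac{1}{2}}^{\lambda\alpha})^{-1}-(a_{\cdot,l+1}^{\lambda\alpha})^{-1}$ and that the boundary indices line up with those stated. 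I would handle this by writing out the first and last few terms of each sum explicitly, matching them to the right-hand side term by term, and only then compacting the middle portion back into the telescoping form. Since the identity has the same structure regardless of whether $m$ is integer or half-integer, one formal argument covers both cases in the statement $m\in\{i,i+\frac{1}{2}\}$.
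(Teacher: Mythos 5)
Your approach is correct and is exactly what the paper does: its entire proof of this lemma is the single line ``Expanding the left side of this equality and rearranging terms to obtain the result,'' i.e.\ the same Abel-summation bookkeeping (split the sum, shift the index by $\tfrac12$, telescope the interior, peel off the endpoints) that you describe, and you are right that no property of the $a_{\cdot,l}^{\lambda\alpha}$ beyond invertibility is needed. One caution for the bookkeeping step you defer: if you carry out the shift $l\mapsto l-\tfrac12$ literally, the surviving endpoint terms are $(a_{\cdot,m+\frac{1}{2}}^{\lambda\alpha})^{-1}(w^{m+\frac{1}{2}})^{2}$ and $-(a_{\cdot,l_{0}+\frac{1}{2}}^{\lambda\alpha})^{-1}(w^{l_{0}})^{2}$ rather than the indices printed in the statement; this agrees with how the identity is actually invoked in the proof of Lemma \ref{l8} (there $l_{0}=\tfrac12$, $m=i$, and the boundary terms that appear are $(a_{i+\frac{1}{2},i+\frac{1}{2}}^{\lambda\alpha})^{-1}(W^{i+\frac{1}{2}})^{2}$ and $-(a_{i+\frac{1}{2},1}^{\lambda\alpha})^{-1}(W^{\frac{1}{2}})^{2}$), so the printed boundary indices are a misprint you should not try to force your computation to reproduce.
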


    \begin{proof}
     Expanding the left side of this equality and rearranging terms to obtain the result.
    \end{proof}

   \begin{lemma}\label{l8}
    Given the generalized sequences $(a_{\cdot,l}^{\lambda\alpha})_{l}$ defined by relations $(\ref{94})$-$(\ref{96})$, respectively, for every grid
    function $u(\cdot,\cdot)$ defined on the mesh space $\mathcal{Y}_{kh}$, the following estimates are satisfied
    \begin{equation*}
    u_{j}^{i+\frac{1}{2}}(c\Delta_{0t}^{\lambda}u_{j}^{i+\frac{1}{2}+\alpha})=\frac{1}{2}c\Delta_{0t}^{\lambda}(u_{j}^{i+\frac{1}{2}+\alpha})^{2}+
    \frac{k^{2-\lambda}}{4\Gamma(2-\lambda)}\left\{(a_{i+\frac{1}{2},i+\frac{1}{2}}^{\lambda\alpha})^{-1}\left(\underset{l=0}{\overset{i}\sum}
    a_{i+\frac{1}{2},l+\frac{1}{2}}^{\lambda\alpha}\delta_{t}u_{j}^{l}\right)^{2}+\right.
   \end{equation*}
    \begin{equation}\label{113}
    \left.\left[a_{i+\frac{1}{2},\frac{1}{2}}^{\lambda\alpha}-(a_{i+\frac{1}{2},1}^{\lambda\alpha})^{-1}(a_{i+\frac{1}{2},\frac{1}{2}}^{\lambda\alpha})^{2}\right]
    \left(\delta_{t}u_{j}^{0}\right)^{2}+\underset{l=\frac{1}{2}}{\overset{i-\frac{1}{2}}\sum}\left[(a_{i+\frac{1}{2},l+\frac{1}{2}}^{\lambda\alpha})^{-1}
    -(a_{i+\frac{1}{2},l+1}^{\lambda\alpha})^{-1}\right]\left(\underset{r=0}{\overset{l}\sum}a_{i+\frac{1}{2},r+\frac{1}{2}}^{\lambda\alpha}
    \delta_{t}u_{j}^{r}\right)^{2}\right\},
   \end{equation}
    \begin{equation*}
    u_{j}^{i}(c\Delta_{0t}^{\lambda}u_{j}^{i+\frac{1}{2}+\alpha})=\frac{1}{2}c\Delta_{0t}^{\lambda}(u_{j}^{i+\frac{1}{2}+\alpha})^{2}-
    \frac{k^{2-\lambda}}{4\Gamma(2-\lambda)}\left\{(a_{i+\frac{1}{2},i+\frac{1}{2}}^{\lambda\alpha})^{-1}(\delta_{t}u_{j}^{i})^{2}-
    \left[a_{i+\frac{1}{2},\frac{1}{2}}^{\lambda\alpha}-(a_{i+\frac{1}{2},1}^{\lambda\alpha})^{-1}
    (a_{i+\frac{1}{2},\frac{1}{2}}^{\lambda\alpha})^{2}\right]*\right.
   \end{equation*}
    \begin{equation}\label{114}
     \left.\left(\delta_{t}u_{j}^{0}\right)^{2}
    -(a_{i+\frac{1}{2},i}^{\lambda\alpha})^{-1}\left(\underset{l=0}{\overset{i-\frac{1}{2}}\sum}
    a_{i+\frac{1}{2},l+\frac{1}{2}}^{\lambda\alpha}\delta_{t}u_{j}^{l}\right)^{2}-\underset{l=\frac{1}{2}}{\overset{i-1}\sum}
    \left[(a_{i+\frac{1}{2},l+\frac{1}{2}}^{\lambda\alpha})^{-1}-(a_{i+\frac{1}{2},l+1}^{\lambda\alpha})^{-1}\right]\left(\underset{r=0}
    {\overset{l}\sum}a_{i+\frac{1}{2},r+\frac{1}{2}}^{\lambda\alpha}\delta_{t}u_{j}^{r}\right)^{2}\right\}.
     \end{equation}
     Furthermore,
    \begin{equation*}
    u_{j}^{i+1}(c\Delta_{0t}^{\lambda}u_{j}^{i+1+\alpha})=\frac{1}{2}c\Delta_{0t}^{\lambda}(u_{j}^{i+1+\alpha})^{2}+
    \frac{k^{2-\lambda}}{4\Gamma(2-\lambda)}\left\{(a_{i+1,i+1}^{\lambda\alpha})^{-1}\left(\underset{l=0}{\overset{i+\frac{1}{2}}\sum}
    a_{i+1,l+\frac{1}{2}}^{\lambda\alpha}\delta_{t}u_{j}^{l}\right)^{2}+\right.
   \end{equation*}
    \begin{equation}\label{115}
    \left.\left[a_{i+1,\frac{1}{2}}^{\lambda\alpha}-(a_{i+1,1}^{\lambda\alpha})^{-1}(a_{i+1,\frac{1}{2}}^{\lambda\alpha})^{2}\right]
    \left(\delta_{t}u_{j}^{0}\right)^{2}+\underset{l=\frac{1}{2}}{\overset{i}\sum}\left[(a_{i+1,l+\frac{1}{2}}^{\lambda\alpha})^{-1}
    -(a_{i+1,l+1}^{\lambda\alpha})^{-1}\right]\left(\underset{r=0}{\overset{l}\sum}a_{i+1,r+\frac{1}{2}}^{\lambda\alpha}
    \delta_{t}u_{j}^{r}\right)^{2}\right\},
   \end{equation}
    \begin{equation*}
    u_{j}^{i+\frac{1}{2}}(c\Delta_{0t}^{\lambda}u_{j}^{i+1+\alpha})=\frac{1}{2}c\Delta_{0t}^{\lambda}(u_{j}^{i+1+\alpha})^{2}-
    \frac{k^{2-\lambda}}{4\Gamma(2-\lambda)}\left\{(a_{i+1,i+1}^{\lambda\alpha})^{-1}(\delta_{t}u_{j}^{i+\frac{1}{2}})^{2}-
    \left[a_{i+1,\frac{1}{2}}^{\lambda\alpha}-(a_{i+1,1}^{\lambda\alpha})^{-1}(a_{i+1,\frac{1}{2}}^{\lambda\alpha})^{2}\right]*\right.
   \end{equation*}
    \begin{equation}\label{116}
     \left.\left(\delta_{t}u_{j}^{0}\right)^{2}-(a_{i+1,i+\frac{1}{2}}^{\lambda\alpha})^{-1}\left(\underset{l=0}{\overset{i}\sum}
    a_{i+1,l+\frac{1}{2}}^{\lambda\alpha}\delta_{t}u_{j}^{l}\right)^{2}-\underset{l=\frac{1}{2}}{\overset{i-\frac{1}{2}}\sum}
    \left[(a_{i+1,l+\frac{1}{2}}^{\lambda\alpha})^{-1}-(a_{i+1,l+1}^{\lambda\alpha})^{-1}\right]\left(\underset{r=0}
    {\overset{l}\sum}a_{i+1,r+\frac{1}{2}}^{\lambda\alpha}\delta_{t}u_{j}^{r}\right)^{2}\right\},
     \end{equation}
    where $"*"$ denotes the usual multiplication in $\mathbb{C}$, $c\Delta_{0t}^{\lambda}u_{j}^{i+\frac{1}{2}+\alpha}$ and
    $c\Delta_{0t}^{\lambda}u_{j}^{i+1+\alpha}$ are defined by equations $(\ref{21})$ and $(\ref{51a})$, respectively.
   \end{lemma}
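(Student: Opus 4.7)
The four identities $(113)$–$(116)$ all express a product of a grid value and $c\Delta_{0t}^{\lambda}$ acting on $u$ as one half of $c\Delta_{0t}^{\lambda}$ acting on $u^{2}$ plus an explicit quadratic remainder. I will describe my plan for $(113)$ in detail; $(114)$–$(116)$ follow by the same method with obvious sign/index changes.

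\textbf{Step 1 (encode $c\Delta_{0t}^{\lambda}$ through the sequence $a_{\cdot,\cdot}^{\lambda\alpha}$).} Using $(21)$ and the conversion formulas $(22)$, $(22a)$, the definitions $(94)$–$(95)$ are designed precisely so that
\[
c\Delta_{0t}^{\lambda}u_{j}^{i+\frac{1}{2}+\alpha}
=\frac{k^{1-\lambda}}{\Gamma(2-\lambda)}\sum_{l=0,\frac{1}{2},1,\ldots,i}a_{i+\frac{1}{2},l+\frac{1}{2}}^{\lambda\alpha}\,\delta_{t}u_{j}^{l},
\]
i.e.\ each half-step increment $\delta_{t}u_{j}^{l}$ carries the single coefficient $a_{i+\frac{1}{2},l+\frac{1}{2}}^{\lambda\alpha}$. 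The same reorganization gives an analogous formula for $(u^{2})_{j}^{\cdot}$ in place of $u_{j}^{\cdot}$.

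\textbf{Step 2 (discrete product rule).} Apply the elementary identity $2a(b-c)=(b^{2}-c^{2})+(a-c)^{2}-(a-b)^{2}$ with $a=u_{j}^{i+\frac{1}{2}}$, $b=u_{j}^{l+\frac{1}{2}}$, $c=u_{j}^{l}$ and divide by $k$:
\[
u_{j}^{i+\frac{1}{2}}\delta_{t}u_{j}^{l}
=\tfrac{1}{2}\delta_{t}(u_{j}^{l})^{2}
+\tfrac{1}{k}\bigl[(u_{j}^{i+\frac{1}{2}}-u_{j}^{l})^{2}-(u_{j}^{i+\frac{1}{2}}-u_{j}^{l+\frac{1}{2}})^{2}\bigr].
\]
Multiply by $a_{i+\frac{1}{2},l+\frac{1}{2}}^{\lambda\alpha}$ and sum over $l$: the first piece gives exactly $\tfrac{1}{2}c\Delta_{0t}^{\lambda}(u_{j}^{i+\frac{1}{2}+\alpha})^{2}$, while the rest forms a weighted, half-step telescoping sum
\[
\mathcal{R}:=\frac{k^{-\lambda}}{\Gamma(2-\lambda)}\sum_{l}a_{i+\frac{1}{2},l+\frac{1}{2}}^{\lambda\alpha}\bigl[(u_{j}^{i+\frac{1}{2}}-u_{j}^{l})^{2}-(u_{j}^{i+\frac{1}{2}}-u_{j}^{l+\frac{1}{2}})^{2}\bigr].
\]

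\textbf{Step 3 (recast $\mathcal{R}$ via partial sums and Lemma $\ref{l7}$).} Introduce $T_{l}:=\sum_{r=0}^{l}a_{i+\frac{1}{2},r+\frac{1}{2}}^{\lambda\alpha}\,\delta_{t}u_{j}^{r}$ and observe, from $u_{j}^{r+\frac{1}{2}}-u_{j}^{r}=\tfrac{k}{2}\delta_{t}u_{j}^{r}$ and the identity $T_{l+\frac{1}{2}}-T_{l}=a_{i+\frac{1}{2},l+1}^{\lambda\alpha}\delta_{t}u_{j}^{l+\frac{1}{2}}$, that the differences of consecutive squared deviations $(u_{j}^{i+\frac{1}{2}}-u_{j}^{\cdot})^{2}$ can be rewritten in terms of differences $T_{l+\frac{1}{2}}^{2}-T_{l}^{2}$. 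Applying Lemma $\ref{l7}$ with $w^{l}=T_{l}$, $l_{0}=0$ and $m=i$ turns the telescoping sum into the three pieces appearing on the right of $(113)$: the boundary term $(a_{i+\frac{1}{2},i+\frac{1}{2}}^{\lambda\alpha})^{-1}T_{i}^{2}$, the initial term $-(a_{i+\frac{1}{2},\frac{1}{2}}^{\lambda\alpha})^{-1}T_{0}^{2}$ (which, using $T_{0}=a_{i+\frac{1}{2},\frac{1}{2}}^{\lambda\alpha}\delta_{t}u_{j}^{0}$, rearranges into $[a_{i+\frac{1}{2},\frac{1}{2}}^{\lambda\alpha}-(a_{i+\frac{1}{2},1}^{\lambda\alpha})^{-1}(a_{i+\frac{1}{2},\frac{1}{2}}^{\lambda\alpha})^{2}](\delta_{t}u_{j}^{0})^{2}$), and the sum $\sum_{l=\frac{1}{2}}^{i-\frac{1}{2}}[(a_{i+\frac{1}{2},l+\frac{1}{2}}^{\lambda\alpha})^{-1}-(a_{i+\frac{1}{2},l+1}^{\lambda\alpha})^{-1}]T_{l}^{2}$. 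The factor $k^{2}/4$ arising from the $(k/2)$ scaling in each partial sum combines with $k^{-\lambda}/\Gamma(2-\lambda)$ to produce the prefactor $k^{2-\lambda}/(4\Gamma(2-\lambda))$.

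\textbf{Step 4 (other three identities).} For $(114)$, replace $u_{j}^{i+\frac{1}{2}}$ by $u_{j}^{i}$ in Step 2; the roles of $u_{j}^{l}$ and $u_{j}^{l+\frac{1}{2}}$ effectively swap, giving the opposite sign in front of the brace and the $(\delta_{t}u_{j}^{i})^{2}$ boundary contribution. For $(115)$–$(116)$, rerun the argument with $c\Delta_{0t}^{\lambda}u_{j}^{i+1+\alpha}$ from $(51a)$ and the sequence $(a_{i+1,l+\frac{1}{2}}^{\lambda\alpha})_{l}$ from $(96)$, with the index set extended to include $l=i+\tfrac{1}{2}$.

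The main obstacle is purely bookkeeping: keeping straight the half-integer indexing on $l$, correctly identifying the coefficients in $(21)$/$(51a)$ with the $a_{\cdot,\cdot}^{\lambda\alpha}$ via the piecewise definitions $(94)$–$(96)$, and producing exactly the three types of terms on the right of $(113)$–$(116)$ after the Abel summation of Lemma $\ref{l7}$. No sign or positivity facts are needed at this stage (they will only be used later, via relations $(99)$–$(100)$, when these identities are combined to obtain the stability estimate).
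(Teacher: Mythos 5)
Your overall architecture coincides with the paper's: encode $c\Delta_{0t}^{\lambda}$ through the sequence $a_{\cdot,\cdot}^{\lambda\alpha}$ as in $(\ref{117})$, peel off $\frac{1}{2}c\Delta_{0t}^{\lambda}(u^{2})$ by a discrete product rule, and convert the quadratic remainder into the form $(\ref{113})$ via the partial sums $T_{l}=W_{i+\frac{1}{2},j}^{l+\frac{1}{2}}$ and the Abel summation of Lemma $\ref{l7}$. Steps 1 and 2 are sound (your identity $2a(b-c)=(b^{2}-c^{2})+(a-c)^{2}-(a-b)^{2}$ reproduces exactly the paper's intermediate quantity $\delta_{t}u_{j}^{l}\bigl(u_{j}^{i+\frac{1}{2}}-\frac{u_{j}^{l+\frac{1}{2}}+u_{j}^{l}}{2}\bigr)$), and you correctly identify the boundary bookkeeping $T_{0}=a_{i+\frac{1}{2},\frac{1}{2}}^{\lambda\alpha}\delta_{t}u_{j}^{0}$ that produces the coefficient $a_{i+\frac{1}{2},\frac{1}{2}}^{\lambda\alpha}-(a_{i+\frac{1}{2},1}^{\lambda\alpha})^{-1}(a_{i+\frac{1}{2},\frac{1}{2}}^{\lambda\alpha})^{2}$.

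The gap is in Step 3. The assertion that the differences $(u_{j}^{i+\frac{1}{2}}-u_{j}^{l})^{2}-(u_{j}^{i+\frac{1}{2}}-u_{j}^{l+\frac{1}{2}})^{2}$ "can be rewritten in terms of differences $T_{l+\frac{1}{2}}^{2}-T_{l}^{2}$" is false termwise: the deviation $u_{j}^{i+\frac{1}{2}}-u_{j}^{l}$ is an \emph{unweighted} sum of the increments $\delta_{t}u_{j}^{r}$ with $r\geq l$ (the future relative to $l$), whereas $T_{l}$ is the \emph{weighted} partial sum over $r\leq l$ (the past). The two quadratic forms agree only after the full summation over $l$ against the weights $a_{i+\frac{1}{2},l+\frac{1}{2}}^{\lambda\alpha}$, and establishing that agreement is precisely the content of the lemma; asserting it as an "observation" begs the question. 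The missing mechanism (which is what the paper actually does between $(\ref{119})$ and the application of Lemma $\ref{l7}$) is: expand the remainder as the double sum $\sum_{l}a_{i+\frac{1}{2},l+\frac{1}{2}}^{\lambda\alpha}(\delta_{t}u_{j}^{l})^{2}+2\sum_{l}a_{i+\frac{1}{2},l+\frac{1}{2}}^{\lambda\alpha}\delta_{t}u_{j}^{l}\sum_{r>l}\delta_{t}u_{j}^{r}$, \emph{swap the order of summation} so that the inner weighted sum becomes $T_{r-\frac{1}{2}}$, substitute $\delta_{t}u_{j}^{r}=(a_{i+\frac{1}{2},r+\frac{1}{2}}^{\lambda\alpha})^{-1}(T_{r}-T_{r-\frac{1}{2}})$ (note this, not your $T_{l+\frac{1}{2}}-T_{l}=a_{i+\frac{1}{2},l+1}^{\lambda\alpha}\delta_{t}u_{j}^{l+\frac{1}{2}}$, is the identity that gets used), and recognize $(T_{r}-T_{r-\frac{1}{2}})^{2}+2(T_{r}-T_{r-\frac{1}{2}})T_{r-\frac{1}{2}}=T_{r}^{2}-T_{r-\frac{1}{2}}^{2}$ before invoking Lemma $\ref{l7}$. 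Be aware also that a direct Abel summation on your $v$-form $\sum_{l}a_{i+\frac{1}{2},l+\frac{1}{2}}^{\lambda\alpha}[(v^{l})^{2}-(v^{l+\frac{1}{2}})^{2}]$ yields a different (if also valid) identity with weights $a_{i+\frac{1}{2},l+1}^{\lambda\alpha}-a_{i+\frac{1}{2},l+\frac{1}{2}}^{\lambda\alpha}$ on $(u_{j}^{i+\frac{1}{2}}-u_{j}^{l+\frac{1}{2}})^{2}$, which is \emph{not} the form $(\ref{113})$ whose coefficients $(a_{i+\frac{1}{2},l+\frac{1}{2}}^{\lambda\alpha})^{-1}-(a_{i+\frac{1}{2},l+1}^{\lambda\alpha})^{-1}$ are needed for the positivity argument in Lemma $\ref{l9}$; so this shortcut cannot replace the missing step.
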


   \begin{proof}
    Firstly, combining equations $(\ref{21})$, $(\ref{22})$ and $(\ref{95})$, it is not difficult to observe that
    \begin{equation}\label{117}
    c\Delta_{0t}^{\lambda}u_{j}^{i+\frac{1}{2}+\alpha}=\frac{k^{1-\lambda}}{\Gamma(2-\lambda)}\underset{l=0}{\overset{i}\sum}
    a_{i+\frac{1}{2},l+\frac{1}{2}}^{\lambda\alpha}\delta_{t}u_{j}^{l}.
     \end{equation}
     Similarly, a combining equations $(\ref{51a})$, $(\ref{52})$ and $(\ref{96})$ gives
    \begin{equation}\label{118}
    c\Delta_{0t}^{\lambda}u_{j}^{i+1+\alpha}=\frac{k^{1-\lambda}}{\Gamma(2-\lambda)}\underset{l=0}{\overset{i+\frac{1}{2}}\sum}
    a_{i+1,l+\frac{1}{2}}^{\lambda\alpha}\delta_{t}u_{j}^{l},
     \end{equation}
    where the summation index $l$ varies with a step size $\frac{1}{2}.$ Furthermore, using relation $(\ref{117})$, we should prove only equations
    $(\ref{113})$ and $(\ref{114})$. The proof of relations $(\ref{115})$ and $(\ref{116})$ is similar thanks to equation $(\ref{118})$.\\

    Subtracting the quantity $\frac{1}{2}c\Delta_{0t}^{\lambda}(u_{j}^{i+\frac{1}{2}+\alpha})^{2}$ from
    $u_{j}^{i+\frac{1}{2}}(c\Delta_{0t}^{\lambda}u_{j}^{i+\frac{1}{2}+\alpha})$ to get
    \begin{equation*}
    u_{j}^{i+\frac{1}{2}}(c\Delta_{0t}^{\lambda}u_{j}^{i+\frac{1}{2}+\alpha})-\frac{1}{2}c\Delta_{0t}^{\lambda}(u_{j}^{i+\frac{1}{2}+\alpha})^{2}=
    \frac{k^{1-\lambda}}{\Gamma(2-\lambda)}\left[u_{j}^{i+\frac{1}{2}}\underset{l=0}{\overset{i}\sum}
    a_{i+\frac{1}{2},l+\frac{1}{2}}^{\lambda\alpha}\delta_{t}u_{j}^{l}-\frac{1}{2}\underset{l=0}{\overset{i}\sum}
    a_{i+\frac{1}{2},l+\frac{1}{2}}^{\lambda\alpha}\delta_{t}(u_{j}^{l})^{2}\right]=
   \end{equation*}
    \begin{equation*}
    \frac{k^{1-\lambda}}{\Gamma(2-\lambda)}\left[u_{j}^{i+\frac{1}{2}}\underset{l=0}{\overset{i}\sum}a_{i+\frac{1}{2},l+\frac{1}{2}}^{\lambda\alpha}
    \delta_{t}u_{j}^{l}-\frac{1}{2}\underset{l=0}{\overset{i}\sum}a_{i+\frac{1}{2},l+\frac{1}{2}}^{\lambda\alpha}\frac{(u_{j}^{l+\frac{1}{2}})^{2}-
    (u_{j}^{l})^{2}}{k/2}\right]=\frac{k^{1-\lambda}}{\Gamma(2-\lambda)}\left[u_{j}^{i+\frac{1}{2}}\underset{l=0}{\overset{i}\sum}
    a_{i+\frac{1}{2},l+\frac{1}{2}}^{\lambda\alpha}\delta_{t}u_{j}^{l}\right.
   \end{equation*}
   \begin{equation*}
    \left.-\frac{1}{2}\underset{l=0}{\overset{i}\sum}a_{i+\frac{1}{2},l+\frac{1}{2}}^{\lambda\alpha}\delta_{t}u_{j}^{l}(u_{j}^{l+\frac{1}{2}}
    +u_{j}^{l})\right]=\frac{k^{1-\lambda}}{\Gamma(2-\lambda)}\underset{l=0}{\overset{i}\sum}a_{i+\frac{1}{2},l
    +\frac{1}{2}}^{\lambda\alpha}\delta_{t}u_{j}^{l}\left(u_{j}^{i+\frac{1}{2}}-\frac{u_{j}^{l+\frac{1}{2}}+u_{j}^{l}}{2}\right)=
   \end{equation*}
   \begin{equation*}
   \frac{k^{1-\lambda}}{\Gamma(2-\lambda)}\underset{l=0}{\overset{i}\sum}a_{i+\frac{1}{2},l+\frac{1}{2}}^{\lambda\alpha}\delta_{t}u_{j}^{l}
   \left[\frac{1}{2}(u_{j}^{l+\frac{1}{2}}-u_{j}^{l})+\underset{r=l+\frac{1}{2}}{\overset{i}\sum}(u_{j}^{r+\frac{1}{2}}-u_{j}^{r})\right]=
    \frac{k^{2-\lambda}}{4\Gamma(2-\lambda)}\left[\underset{l=0}{\overset{i}\sum}a_{i+\frac{1}{2},l+\frac{1}{2}}^{\lambda\alpha}(\delta_{t}u_{j}^{l})^{2}+
    \right.
   \end{equation*}
   \begin{equation}\label{119}
    \left.2\underset{l=0}{\overset{i}\sum}a_{i+\frac{1}{2},l+\frac{1}{2}}^{\lambda\alpha}\delta_{t}u_{j}^{l}\underset{r=l+\frac{1}{2}}{\overset{i}\sum}
    \delta_{t}u_{j}^{r}\right]=\frac{k^{2-\lambda}}{4\Gamma(2-\lambda)}\left[\underset{l=0}{\overset{i}\sum}a_{i+\frac{1}{2},l+\frac{1}{2}}^{\lambda\alpha}
    (\delta_{t}u_{j}^{l})^{2}+2\underset{r=\frac{1}{2}}{\overset{i}\sum}\delta_{t}u_{j}^{r}\underset{l=0}{\overset{r-\frac{1}{2}}\sum}
    a_{i+\frac{1}{2},l+\frac{1}{2}}^{\lambda\alpha}\delta_{t}u_{j}^{l}\right].
   \end{equation}
   We recall that the summation indices $r$ and $l$ varies in the ranges: $\frac{1}{2},1,\frac{3}{2},...,i$ and $0,\frac{1}{2},1,...,r-\frac{1}{2}$,
   respectively. In order to simplifying computations, we set
   \begin{equation}\label{120}
    W_{i+\frac{1}{2},j}^{r+\frac{1}{2}}=\underset{l=0}{\overset{r}\sum}a_{i+\frac{1}{2},l+\frac{1}{2}}^{\lambda\alpha}\delta_{t}u_{j}^{l},\text{\,\,\,for\,\,\,}
    r\geq0.
   \end{equation}
    Utilizing this, direct calculations yield
    \begin{equation}\label{121}
    \delta_{t}u_{j}^{r}=(a_{i+\frac{1}{2},r+\frac{1}{2}}^{\lambda\alpha})^{-1}(W_{i+\frac{1}{2},j}^{r+\frac{1}{2}}-W_{i+\frac{1}{2},j}^{r}),
    \text{\,\,\,for\,\,\,}r\geq0.
   \end{equation}
   Substituting $(\ref{121})$ into equation $(\ref{119})$ and using Lemma $\ref{l7}$, simple computations provide
      \begin{equation*}
    u_{j}^{i+\frac{1}{2}}(c\Delta_{0t}^{\lambda}u_{j}^{i+\frac{1}{2}+\alpha})-\frac{1}{2}c\Delta_{0t}^{\lambda}(u_{j}^{i+\frac{1}{2}+\alpha})^{2}=
    \frac{k^{2-\lambda}}{4\Gamma(2-\lambda)}\left[a_{i+\frac{1}{2},\frac{1}{2}}^{\lambda\alpha}(\delta_{t}u_{j}^{0})^{2}+
    \underset{l=\frac{1}{2}}{\overset{i}\sum}(a_{i+\frac{1}{2},l+\frac{1}{2}}^{\lambda\alpha})^{-1}
    (W_{i+\frac{1}{2},j}^{l+\frac{1}{2}}-W_{i+\frac{1}{2},j}^{l})^{2}\right.
   \end{equation*}
     \begin{equation*}
    \left.+2\underset{r=\frac{1}{2}}{\overset{i}\sum}(a_{i+\frac{1}{2},r+\frac{1}{2}}^{\lambda\alpha})^{-1}(W_{i+\frac{1}{2},j}^{r+\frac{1}{2}}-
    W_{i+\frac{1}{2},j}^{r})W_{i+\frac{1}{2},j}^{r}\right]=\frac{k^{2-\lambda}}{4\Gamma(2-\lambda)}\left[a_{i+\frac{1}{2},\frac{1}{2}}^{\lambda\alpha}
   (\delta_{t}u_{j}^{0})^{2}+\right.
   \end{equation*}
     \begin{equation*}
    \underset{l=\frac{1}{2}}{\overset{i}\sum}(a_{i+\frac{1}{2},l+\frac{1}{2}}^{\lambda\alpha})^{-1}
    \left[(W_{i+\frac{1}{2},j}^{l+\frac{1}{2}})^{2}-(W_{i+\frac{1}{2},j}^{l})^{2}\right]\overset{(a)}{=}\frac{k^{2-\lambda}}{4\Gamma(2-\lambda)}
    \left[a_{i+\frac{1}{2},\frac{1}{2}}^{\lambda\alpha}(\delta_{t}u_{j}^{0})^{2}+(a_{i+\frac{1}{2},i+\frac{1}{2}}^{\lambda\alpha})^{-1}
    (W_{i+\frac{1}{2},j}^{i+\frac{1}{2}})^{2}-\right.
   \end{equation*}
      \begin{equation*}
    \left.(a_{i+\frac{1}{2},1}^{\lambda\alpha})^{-1}(W_{i+\frac{1}{2},j}^{\frac{1}{2}})^{2}+
    \underset{l=\frac{1}{2}}{\overset{i-\frac{1}{2}}\sum}\left[(a_{i+\frac{1}{2},l+\frac{1}{2}}^{\lambda\alpha})^{-1}-
    (a_{i+\frac{1}{2},l+1}^{\lambda\alpha})^{-1}\right](W_{i+\frac{1}{2},j}^{l+\frac{1}{2}})^{2}\right\}.
   \end{equation*}
    Equality $(a)$ is due to Lemma $\ref{l7}$. This ends the proof of relation $(\ref{113})$ thanks to equation $(\ref{120})$.\\

    Let prove equation $(\ref{114})$. Adding the term $\frac{1}{2}c\Delta_{0t}^{\lambda}(u_{j}^{i+\frac{1}{2}+\alpha})^{2}$ to
    $-u_{j}^{i}(c\Delta_{0t}^{\lambda}u_{j}^{i+\frac{1}{2}+\alpha})$ and performing straightforward calculations, this yields
     \begin{equation*}
    -u_{j}^{i}(c\Delta_{0t}^{\lambda}u_{j}^{i+\frac{1}{2}+\alpha})+\frac{1}{2}c\Delta_{0t}^{\lambda}(u_{j}^{i+\frac{1}{2}+\alpha})^{2}=
     \frac{k^{1-\lambda}}{\Gamma(2-\lambda)}\underset{l=0}{\overset{i}\sum}a_{i+\frac{1}{2},l
    +\frac{1}{2}}^{\lambda\alpha}\delta_{t}u_{j}^{l}\left[\frac{1}{2}(u_{j}^{l+\frac{1}{2}}+u_{j}^{l})-u_{j}^{i}\right]=
   \end{equation*}
   \begin{equation*}
   \frac{k^{1-\lambda}}{\Gamma(2-\lambda)}\underset{l=0}{\overset{i}\sum}a_{i+\frac{1}{2},l+\frac{1}{2}}^{\lambda\alpha}\delta_{t}u_{j}^{l}
   \left[\frac{1}{2}(u_{j}^{l+\frac{1}{2}}-u_{j}^{l})-\underset{r=l}{\overset{i-\frac{1}{2}}\sum}(u_{j}^{r+\frac{1}{2}}-u_{j}^{r})\right]=
    \frac{k^{2-\lambda}}{4\Gamma(2-\lambda)}\left[\underset{l=0}{\overset{i}\sum}a_{i+\frac{1}{2},l+\frac{1}{2}}^{\lambda\alpha}(\delta_{t}u_{j}^{l})^{2}-
    \right.
   \end{equation*}
   \begin{equation*}
    \left.2\underset{l=0}{\overset{i}\sum}a_{i+\frac{1}{2},l+\frac{1}{2}}^{\lambda\alpha}\delta_{t}u_{j}^{l}\underset{r=l}{\overset{i-\frac{1}{2}}\sum}
    \delta_{t}u_{j}^{r}\right]=\frac{k^{2-\lambda}}{4\Gamma(2-\lambda)}\left[\underset{l=0}{\overset{i}\sum}a_{i+\frac{1}{2},l+\frac{1}{2}}^{\lambda\alpha}
    (\delta_{t}u_{j}^{l})^{2}-2\underset{r=0}{\overset{i-\frac{1}{2}}\sum}\delta_{t}u_{j}^{r}\underset{l=0}{\overset{r}\sum}
    a_{i+\frac{1}{2},l+\frac{1}{2}}^{\lambda\alpha}\delta_{t}u_{j}^{l}\right],
   \end{equation*}
   since the sum equals zero if the lower summation index is less than the upper one. Using $(\ref{120})$-$(\ref{121})$ together with Lemma $\ref{l7}$,
    this equation becomes
     \begin{equation*}
    -u_{j}^{i}(c\Delta_{0t}^{\lambda}u_{j}^{i+\frac{1}{2}+\alpha})+\frac{1}{2}c\Delta_{0t}^{\lambda}(u_{j}^{i+\frac{1}{2}+\alpha})^{2}=
        \frac{k^{2-\lambda}}{4\Gamma(2-\lambda)}\left[-a_{i+\frac{1}{2},\frac{1}{2}}^{\lambda\alpha}(\delta_{t}u_{j}^{0})^{2}+
    \underset{l=\frac{1}{2}}{\overset{i-\frac{1}{2}}\sum}(a_{i+\frac{1}{2},l+\frac{1}{2}}^{\lambda\alpha})^{-1}
    (W_{i+\frac{1}{2},j}^{l+\frac{1}{2}}-W_{i+\frac{1}{2},j}^{l})^{2}\right.
   \end{equation*}
     \begin{equation*}
    \left.+a_{i+\frac{1}{2},i+\frac{1}{2}}^{\lambda\alpha}(\delta_{t}u_{j}^{i})^{2}-2\underset{r=\frac{1}{2}}{\overset{i-\frac{1}{2}}\sum}
    (a_{i+\frac{1}{2},r+\frac{1}{2}}^{\lambda\alpha})^{-1}(W_{i+\frac{1}{2},j}^{r+\frac{1}{2}}-W_{i+\frac{1}{2},j}^{r})W_{i+\frac{1}{2},j}^{r+\frac{1}{2}}\right]=
     \frac{k^{2-\lambda}}{4\Gamma(2-\lambda)}\left[a_{i+\frac{1}{2},i+\frac{1}{2}}^{\lambda\alpha}(\delta_{t}u_{j}^{i})^{2}-\right.
   \end{equation*}
     \begin{equation*}
    a_{i+\frac{1}{2},\frac{1}{2}}^{\lambda\alpha}(\delta_{t}u_{j}^{0})^{2}-\underset{l=\frac{1}{2}}{\overset{i-\frac{1}{2}}\sum}
    (a_{i+\frac{1}{2},l+\frac{1}{2}}^{\lambda\alpha})^{-1}\left[(W_{i+\frac{1}{2},j}^{l+\frac{1}{2}})^{2}-(W_{i+\frac{1}{2},j}^{l})^{2}\right]=
     \frac{k^{2-\lambda}}{4\Gamma(2-\lambda)}\left[a_{i+\frac{1}{2},i+\frac{1}{2}}^{\lambda\alpha}(\delta_{t}u_{j}^{i})^{2}-
     a_{i+\frac{1}{2},\frac{1}{2}}^{\lambda\alpha}(\delta_{t}u_{j}^{0})^{2}\right.
   \end{equation*}
      \begin{equation*}
    \left. +(a_{i+\frac{1}{2},1}^{\lambda\alpha})^{-1}(W_{i+\frac{1}{2},j}^{\frac{1}{2}})^{2}-(a_{i+\frac{1}{2},i}^{\lambda\alpha})^{-1}
    (W_{i+\frac{1}{2},j}^{i})^{2}-\underset{l=\frac{1}{2}}{\overset{i-1}\sum}\left[(a_{i+\frac{1}{2},l+\frac{1}{2}}^{\lambda\alpha})^{-1}-
    (a_{i+\frac{1}{2},l+1}^{\lambda\alpha})^{-1}\right](W_{i+\frac{1}{2},j}^{l+\frac{1}{2}})^{2}\right\}.
   \end{equation*}
    The proof of equation $(\ref{114})$ is completed thanks to equality $W_{i+\frac{1}{2},j}^{\frac{1}{2}}=
    a_{i+\frac{1}{2},\frac{1}{2}}^{\lambda\alpha}\delta_{t}u_{j}^{0}$ and equation $(\ref{120})$. In a similar manner, one easily proves relations
    $(\ref{115})$ and $(\ref{116})$. This completes the proof of Lemma $\ref{l8}$.
   \end{proof}

   \begin{lemma}\label{l9}
    Let $(a_{\cdot,l}^{\lambda\alpha})_{l}$ be the generalized sequences defined by equations $(\ref{94})$-$(\ref{96})$, respectively. For every grid
    function $u(\cdot,\cdot)$ defined on the mesh space $\mathcal{Y}_{kh}$, the following estimates hold
     \begin{equation}\label{122a}
    \left(c\Delta_{0t}^{\lambda}u^{i+\frac{1}{2}+\alpha},u^{\alpha_{i}}\right)\geq \frac{h}{2}\underset{j=1}{\overset{M-1}\sum}c\Delta_{0t}^{\lambda}
    (u_{j}^{i+\frac{1}{2}+\alpha})^{2},
   \end{equation}
    if $\alpha=1-\lambda$ satisfies
    \begin{equation}\label{122b}
    4\alpha^{2}-(1+4\alpha)\left((a_{i+\frac{1}{2},i}^{\lambda\alpha})^{-1}a_{i+\frac{1}{2},i+\frac{1}{2}}^{\lambda\alpha}-1\right)\leq
     0,\text{\,\,\,for\,\,\,}i\geq1.
   \end{equation}
    Furthermore,
    \begin{equation}\label{122c}
    \left(c\Delta_{0t}^{\lambda}u^{i+1+\alpha},u^{\theta_{i}}\right)\geq \frac{h}{2}\underset{j=1}{\overset{M-1}\sum}c\Delta_{0t}^{\lambda}
    (u_{j}^{i+1+\alpha})^{2},
   \end{equation}
    whenever
    \begin{equation}\label{122d}
    4\alpha^{2}-(1+4\alpha)\left((a_{i+1,i+\frac{1}{2}}^{\lambda\alpha})^{-1}a_{i+1,i+1}^{\lambda\alpha}-1\right)\leq 0,\text{\,\,\,for\,\,\,}
    i\geq0.
   \end{equation}
   \end{lemma}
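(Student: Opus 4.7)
The plan is to exploit the identities in Lemma~\ref{l8}, which already isolate the target quantity $\tfrac{1}{2}c\Delta_{0t}^{\lambda}(u_j^{i+\frac{1}{2}+\alpha})^{2}$ from each of the pointwise products $u_j^{i+\frac12}(c\Delta_{0t}^{\lambda}u_j^{i+\frac12+\alpha})$ and $u_j^{i}(c\Delta_{0t}^{\lambda}u_j^{i+\frac12+\alpha})$. First I would decompose $u_j^{\alpha_i}=(1+2\alpha)u_j^{i+\frac12}-2\alpha u_j^{i}$ and write
$u_j^{\alpha_i}(c\Delta_{0t}^{\lambda}u_j^{i+\frac12+\alpha})=(1+2\alpha)\,u_j^{i+\frac12}(c\Delta_{0t}^{\lambda}u_j^{i+\frac12+\alpha})-2\alpha\,u_j^{i}(c\Delta_{0t}^{\lambda}u_j^{i+\frac12+\alpha})$.
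Substituting $(\ref{113})$ and $(\ref{114})$ on the right, the half-derivatives of the square combine with coefficient $(1+2\alpha)-2\alpha=1$, so one immediately gets
$u_j^{\alpha_i}(c\Delta_{0t}^{\lambda}u_j^{i+\frac12+\alpha})=\tfrac12 c\Delta_{0t}^{\lambda}(u_j^{i+\frac12+\alpha})^{2}+\tfrac{k^{2-\lambda}}{4\Gamma(2-\lambda)}\,R_j^{i}$,
where $R_j^{i}:=(1+2\alpha)\mathcal{A}_j+2\alpha\mathcal{B}_j$ collects the remaining terms from $(\ref{113})$--$(\ref{114})$. It then suffices to show that $h\sum_{j=1}^{M-1}R_j^{i}\geq 0$, which yields the target inequality after summing over $j$ and multiplying by $h$.

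Next I would inspect the structure of $R_j^{i}$ term by term in the $W_{i+\frac12,j}^{r+\frac12}$ notation of $(\ref{120})$. The coefficient of $(\delta_t u_j^{0})^{2}$ collapses to $C:=a_{i+\frac12,\frac12}^{\lambda\alpha}-(a_{i+\frac12,1}^{\lambda\alpha})^{-1}(a_{i+\frac12,\frac12}^{\lambda\alpha})^{2}=a_{i+\frac12,\frac12}^{\lambda\alpha}\bigl(1-a_{i+\frac12,\frac12}^{\lambda\alpha}/a_{i+\frac12,1}^{\lambda\alpha}\bigr)$, which is nonnegative by the monotonicity $(\ref{99})$ of Lemma~\ref{l6}. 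For $l=\tfrac12,1,\tfrac32,\dots,i-1$ the coefficient of $(W_{i+\frac12,j}^{l+\frac12})^{2}$ reduces to $D_l:=(a_{i+\frac12,l+\frac12}^{\lambda\alpha})^{-1}-(a_{i+\frac12,l+1}^{\lambda\alpha})^{-1}$, again nonnegative by $(\ref{99})$. So after discarding these manifestly nonnegative contributions, what remains in $R_j^{i}$ is
\begin{equation*}
\mathcal{Q}_j^{i}:=\Bigl[\tfrac{1}{b}-\tfrac{1+2\alpha}{a}\Bigr](W_{i+\frac12,j}^{i})^{2}+\tfrac{1+2\alpha}{a}(W_{i+\frac12,j}^{i+\frac12})^{2}+\tfrac{2\alpha}{a}(\delta_t u_j^{i})^{2},
\end{equation*}
where I write $a:=a_{i+\frac12,i+\frac12}^{\lambda\alpha}$ and $b:=a_{i+\frac12,i}^{\lambda\alpha}$.

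The main obstacle is to show that $\mathcal{Q}_j^{i}\geq 0$; this is exactly the step where assumption $(\ref{122b})$ enters. Using the identity $W_{i+\frac12,j}^{i+\frac12}=W_{i+\frac12,j}^{i}+a\,\delta_t u_j^{i}$, I expand and collect to obtain the quadratic form
$\mathcal{Q}_j^{i}=\tfrac{1}{b}(W_{i+\frac12,j}^{i})^{2}+2(1+2\alpha)\,W_{i+\frac12,j}^{i}\,\delta_t u_j^{i}+\bigl[(1+2\alpha)a+\tfrac{2\alpha}{a}\bigr](\delta_t u_j^{i})^{2}$
in the pair $(W_{i+\frac12,j}^{i},\delta_t u_j^{i})$. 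Completing the square around $W_{i+\frac12,j}^{i}+b(1+2\alpha)\delta_t u_j^{i}$ reduces nonnegativity to the discriminant inequality $b(1+2\alpha)^{2}\leq (1+2\alpha)a+\tfrac{2\alpha}{a}$, which after clearing denominators becomes $(1+2\alpha)a\bigl[(1+2\alpha)b-a\bigr]\leq 2\alpha$. Rewriting $(\ref{122b})$ as $b(1+2\alpha)^{2}\leq a(1+4\alpha)$ gives $(1+2\alpha)b-a\leq\tfrac{2\alpha a}{1+2\alpha}$, and substituting this upper bound reduces the required discriminant inequality precisely to the content of hypothesis $(\ref{122b})$; this is the decisive step.

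Finally, summing $\mathcal{Q}_j^{i}\geq 0$ and the nonnegative contributions $C(\delta_t u_j^{0})^{2}+\sum_l D_l(W_{i+\frac12,j}^{l+\frac12})^{2}$ over $j=1,\dots,M-1$, multiplying by $h$, and recalling the definition of the discrete inner product $(\cdot,\cdot)$ in $(\ref{2aa})$ delivers $(\ref{122a})$. The second estimate $(\ref{122c})$ under $(\ref{122d})$ is obtained by the identical argument with $(\ref{115})$ and $(\ref{116})$ in place of $(\ref{113})$ and $(\ref{114})$, using $u^{\theta_i}=(1+2\alpha)u^{i+1}-2\alpha u^{i+\frac12}$ and the sequence $a_{i+1,\cdot}^{\lambda\alpha}$ in place of $a_{i+\frac12,\cdot}^{\lambda\alpha}$; the monotonicity in $(\ref{99})$ again controls all but the terminal quadratic form, which is handled by $(\ref{122d})$ exactly as above.
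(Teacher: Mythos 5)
Your overall strategy coincides with the paper's: decompose $u_j^{\alpha_i}=(1+2\alpha)u_j^{i+\frac12}-2\alpha u_j^{i}$, combine the identities $(\ref{113})$ and $(\ref{114})$ so that the two copies of $\frac12 c\Delta_{0t}^{\lambda}(u_j^{i+\frac12+\alpha})^{2}$ recombine with coefficient one, discard the manifestly nonnegative contributions using the monotonicity $(\ref{99})$ of Lemma $\ref{l6}$, and reduce the terminal block to a quadratic form whose nonnegativity is guaranteed by $(\ref{122b})$ via completing the square. This is exactly the paper's equation $(\ref{122e})$ and the estimate that follows it, so the architecture of your argument is the intended one, and your bookkeeping of the coefficients of $(\delta_t u_j^{0})^{2}$ and of $(W_{i+\frac12,j}^{l+\frac12})^{2}$ for $l\le i-1$ agrees with the paper.

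There is, however, one step that fails as written: the coefficient you assign to $(\delta_t u_j^{i})^{2}$ in the terminal quadratic form. You take the displayed statement of $(\ref{114})$ at face value and obtain $\frac{2\alpha}{a}$, with $a=a_{i+\frac12,i+\frac12}^{\lambda\alpha}$ and $b=a_{i+\frac12,i}^{\lambda\alpha}$; but the derivation inside the proof of Lemma $\ref{l8}$ (and the paper's own use of it in Lemma $\ref{l9}$) produces $a\,(\delta_t u_j^{i})^{2}$ \emph{without} the inverse — equivalently $a^{-1}(W_{i+\frac12,j}^{i+\frac12}-W_{i+\frac12,j}^{i})^{2}$, since $W_{i+\frac12,j}^{i+\frac12}-W_{i+\frac12,j}^{i}=a\,\delta_t u_j^{i}$ by $(\ref{121})$; the inverse in the printed statement of $(\ref{114})$ is a misprint. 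With your coefficient the discriminant condition is $b(1+2\alpha)^{2}\le(1+2\alpha)a+\frac{2\alpha}{a}$, and substituting the bound $(1+2\alpha)b-a\le\frac{2\alpha a}{1+2\alpha}$ coming from $(\ref{122b})$ yields only $(1+2\alpha)a[(1+2\alpha)b-a]\le 2\alpha a^{2}$, so closing the argument would additionally require $a\le 1$ — which is false in general, since $a_{i+\frac12,i+\frac12}^{\lambda\alpha}=\widetilde{f}_{i+\frac12,i-1}^{\lambda\alpha}+\alpha^{1-\lambda}$ exceeds $1$ for small $\lambda$. Your claim that the substitution "reduces precisely to the content of hypothesis $(\ref{122b})$" therefore does not hold. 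With the corrected coefficient the quadratic form becomes $\frac1b(W_{i+\frac12,j}^{i})^{2}+2(1+2\alpha)W_{i+\frac12,j}^{i}\,\delta_t u_j^{i}+(1+4\alpha)a\,(\delta_t u_j^{i})^{2}$, whose discriminant condition $b(1+2\alpha)^{2}\le(1+4\alpha)a$ is exactly $(\ref{122b})$, and the proof closes; this is the paper's computation, which completes the square in the pair $\bigl(W_{i+\frac12,j}^{i+\frac12},W_{i+\frac12,j}^{i}\bigr)$ rather than $\bigl(W_{i+\frac12,j}^{i},\delta_t u_j^{i}\bigr)$, an equivalent choice.
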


    \begin{proof}
    We should prove only estimate $(\ref{122a})$, the proof of inequality $(\ref{122c})$ is similar.\\

    Multiplying both sides of relations $(\ref{113})$ and $(\ref{114})$ by  $1+2\alpha$ and $-2\alpha$, respectively, using relation $(\ref{120})$ and
    summing, it is not hard to see that
    \begin{equation*}
    (1+2\alpha)u^{i+\frac{1}{2}}_{j}c\Delta_{0t}^{\lambda}u_{j}^{i+\frac{1}{2}+\alpha}-2\alpha u^{i}_{j}c\Delta_{0t}^{\lambda}u_{j}^{i+\frac{1}{2}+\alpha}=
    \frac{1}{2}c\Delta_{0t}^{\lambda}(u_{j}^{i+\frac{1}{2}+\alpha})^{2}+\frac{k^{2-\lambda}}{4\Gamma(2-\lambda)}
    \left\{(1+2\alpha)(a_{i+\frac{1}{2},i+\frac{1}{2}}^{\lambda\alpha})^{-1}(W_{i+\frac{1}{2},j}^{i+\frac{1}{2}})^{2}\right.
   \end{equation*}
    \begin{equation*}
    +[a_{i+\frac{1}{2},\frac{1}{2}}^{\lambda\alpha}-(a_{i+\frac{1}{2},1}^{\lambda\alpha})^{-1}(a_{i+\frac{1}{2},\frac{1}{2}}^{\lambda\alpha})^{2}]
    (\delta_{t}u_{j}^{0})^{2}+(1+2\alpha)\left[(a_{i+\frac{1}{2},i}^{\lambda\alpha})^{-1}-(a_{i+\frac{1}{2},i+\frac{1}{2}}^{\lambda\alpha})^{-1}\right]
    (W_{i+\frac{1}{2},j}^{i})^{2}-2\alpha(a_{i+\frac{1}{2},i}^{\lambda\alpha})^{-1}*
   \end{equation*}
    \begin{equation*}
     \left.(W_{i+\frac{1}{2},j}^{i})^{2}+2\alpha(a_{i+\frac{1}{2},i+\frac{1}{2}}^{\lambda\alpha})^{-1}(W_{i+\frac{1}{2},j}^{i+\frac{1}{2}}-
    W_{i+\frac{1}{2},j}^{i})^{2}+\underset{l=\frac{1}{2}}{\overset{i-1}\sum}\left[(a_{i+\frac{1}{2},l+\frac{1}{2}}^{\lambda\alpha})^{-1}-
    (a_{i+\frac{1}{2},l+1}^{\lambda\alpha})^{-1}\right](W_{i+\frac{1}{2},j}^{l+\frac{1}{2}})^{2}\right\},
   \end{equation*}
    where $"*"$ denotes the usual multiplication in $\mathbb{C}$ and $W_{i+\frac{1}{2},j}^{m}$ is given by equation $(\ref{120})$. Since
    $u_{j}^{\alpha_{i}}=(1+2\alpha)u^{i+\frac{1}{2}}_{j}-2\alpha u^{i}_{j}$, expanding and rearranging terms, this is equivalent to
     \begin{equation*}
    u_{j}^{\alpha_{i}}c\Delta_{0t}^{\lambda}u_{j}^{i+\frac{1}{2}+\alpha}=\frac{1}{2}c\Delta_{0t}^{\lambda}(u_{j}^{i+\frac{1}{2}+\alpha})^{2}+
    \frac{k^{2-\lambda}}{4\Gamma(2-\lambda)}\left\{[a_{i+\frac{1}{2},\frac{1}{2}}^{\lambda\alpha}-(a_{i+\frac{1}{2},1}^{\lambda\alpha})^{-1}
    (a_{i+\frac{1}{2},\frac{1}{2}}^{\lambda\alpha})^{2}](\delta_{t}u_{j}^{0})^{2}+\right.
   \end{equation*}
    \begin{equation*}
    (a_{i+\frac{1}{2},i+\frac{1}{2}}^{\lambda\alpha})^{-1}\left[(1+4\alpha)(W_{i+\frac{1}{2},j}^{i+\frac{1}{2}})^{2}-4\alpha
    W_{i+\frac{1}{2},j}^{i+\frac{1}{2}}W_{i+\frac{1}{2},j}^{i}+\left((a_{i+\frac{1}{2},i}^{\lambda\alpha})^{-1}
    a_{i+\frac{1}{2},i+\frac{1}{2}}^{\lambda\alpha}-1\right)(W_{i+\frac{1}{2},j}^{i})^{2}\right]+
   \end{equation*}
    \begin{equation}\label{122e}
     \left.\underset{l=\frac{1}{2}}{\overset{i-1}\sum}\left[(a_{i+\frac{1}{2},l+\frac{1}{2}}^{\lambda\alpha})^{-1}-
    (a_{i+\frac{1}{2},l+1}^{\lambda\alpha})^{-1}\right](W_{i+\frac{1}{2},j}^{l+\frac{1}{2}})^{2}\right\}.
   \end{equation}
    Here, we assume that the sum equals zero if the upper summation index is less than the lower one. Furthermore, since
    $a_{i+\frac{1}{2},l}^{\lambda\alpha}<a_{i+\frac{1}{2},l+\frac{1}{2}}^{\lambda\alpha}$, for $l=\frac{1}{2},1,\frac{3}{2},2,...,i$, it is not difficult
    to observe that
     \begin{equation*}
    [a_{i+\frac{1}{2},\frac{1}{2}}^{\lambda\alpha}-(a_{i+\frac{1}{2},1}^{\lambda\alpha})^{-1}(a_{i+\frac{1}{2},\frac{1}{2}}^{\lambda\alpha})^{2}]
   (\delta_{t}u_{j}^{0})^{2}>0,\text{\,\,\,}\underset{l=\frac{1}{2}}{\overset{i-1}\sum}\left[(a_{i+\frac{1}{2},l+\frac{1}{2}}^{\lambda\alpha})^{-1}-
    (a_{i+\frac{1}{2},l+1}^{\lambda\alpha})^{-1}\right](W_{i+\frac{1}{2},j}^{l+\frac{1}{2}})^{2}>0,
   \end{equation*}
    and
     \begin{equation*}
    (1+4\alpha)(W_{i+\frac{1}{2},j}^{i+\frac{1}{2}})^{2}-4\alpha W_{i+\frac{1}{2},j}^{i+\frac{1}{2}}W_{i+\frac{1}{2},j}^{i}+
   \left((a_{i+\frac{1}{2},i}^{\lambda\alpha})^{-1}a_{i+\frac{1}{2},i+\frac{1}{2}}^{\lambda\alpha}-1\right)(W_{i+\frac{1}{2},j}^{i})^{2}=
   \end{equation*}
    \begin{equation*}
    (1+4\alpha)\left(W_{i+\frac{1}{2},j}^{i+\frac{1}{2}}-\frac{2\alpha}{1+4\alpha}W_{i+\frac{1}{2},j}^{i}\right)^{2}-\frac{1}{1+4\alpha}\left[4\alpha^{2}-
    (1+4\alpha)\left((a_{i+\frac{1}{2},i}^{\lambda\alpha})^{-1}a_{i+\frac{1}{2},i+\frac{1}{2}}^{\lambda\alpha}-1\right)\right](W_{i+\frac{1}{2},j}^{i})^{2}
    \geq 0.
   \end{equation*}
    The last estimate follows from relation $(\ref{122b})$. Using this, equation $(\ref{122e})$ becomes
    \begin{equation*}
    u_{j}^{\alpha_{i}}c\Delta_{0t}^{\lambda}u_{j}^{i+\frac{1}{2}+\alpha}\geq\frac{1}{2}c\Delta_{0t}^{\lambda}(u_{j}^{i+\frac{1}{2}+\alpha})^{2},
    \text{\,\,\,for\,\,\,}j=0,1,2,...,M.
   \end{equation*}
    Multiplying both sides of this estimate by $h$ and summing up from $j=1,2,...,M-1$, to get estimate $(\ref{122a})$. In a similar way, one easily proves
    inequality $(\ref{122c})$. This completes the proof of Lemma $\ref{l9}$.
    \end{proof}

    Using the above results (Lemmas $\ref{l1}$-$\ref{l9}$), we are ready to state and prove the main result (namely Theorem $\ref{tt1}$) of this
    paper.

   \begin{theorem}\label{tt1} (Unconditional stability and Convergence rate).
   Let $U$ be the numerical solution provided by the new approach $(\ref{s1})$-$(\ref{s5})$ and $u$ be the exact solution of the initial-boundary value
   problem $(\ref{1e})$-$(\ref{3e})$. Suppose that $\alpha=1-\lambda$ and the generalized sequences $(a_{\cdot,l}^{\lambda\alpha})_{l}$ satisfy relations
    $(\ref{122b})$ and $(\ref{122d})$. Then it holds
    \begin{equation}\label{122}
     \underset{0\leq i\leq N-1}{\max}\|U^{i+\frac{1}{2}}\|_{L^{2}},\text{\,\,\,} \underset{0\leq i\leq N}{\max}\|U^{i}\|_{L^{2}}\leq
      \underset{0\leq i\leq N}{\max}\|u^{i}\|_{L^{2}}+C\left[k^{2-\frac{\lambda}{2}}+k^{2+\frac{\lambda}{2}}+(1+k^{2})k^{\frac{\lambda}{2}}h^{4}\right].
   \end{equation}
    Furthermore, let $e=u-U$ be the error term, then the following estimates are satisfied
    \begin{equation}\label{123}
     \underset{0\leq i\leq N-1}{\max}\|e^{i+\frac{1}{2}}\|_{L^{2}},\text{\,\,\,} \underset{0\leq i\leq N}{\max}\|e^{i}\|_{L^{2}}\leq
      \widehat{C}\left(k^{2-\frac{\lambda}{2}}+h^{4}\right),
   \end{equation}
    where $C$ and $\widehat{C}$ are positive constants that do not depend on the time step $k$ and mesh size $h$.
   \end{theorem}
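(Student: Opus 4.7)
The plan is to prove unconditional stability and then convergence by an energy method, taking the discrete $L^{2}$-inner product of the scheme $(\ref{s1})$–$(\ref{s4})$ at the half-step with $U^{\alpha_{i}}$ and at the full step with $U^{\theta_{i}}$. After multiplying $(\ref{s3})$ by $\frac{2}{(1+2\alpha)k}$ (and similarly $(\ref{s4})$) and recognizing the fourth–order central spatial operator as $L_{h}U^{\alpha_{i}}$ (resp.\ $L_{h}U^{\theta_{i}}$) defined in $(\ref{79})$, the scheme can be rewritten in the compact form
\begin{equation*}
c\Delta_{0t}^{\lambda}U_{j}^{i+\frac{1}{2}+\alpha}=L_{h}U_{j}^{\alpha_{i}}+s_{j}^{i+\frac{1}{2}+\alpha},
\qquad
c\Delta_{0t}^{\lambda}U_{j}^{i+1+\alpha}=L_{h}U_{j}^{\theta_{i}}+s_{j}^{i+1+\alpha}.
\end{equation*}
Taking the discrete inner product of the first identity with $U^{\alpha_{i}}$, Lemma $\ref{l9}$ produces $(c\Delta_{0t}^{\lambda}U^{i+\frac{1}{2}+\alpha},U^{\alpha_{i}})\geq\frac{1}{2}\,c\Delta_{0t}^{\lambda}\|U^{i+\frac{1}{2}+\alpha}\|_{L^{2}}^{2}$, while Lemma $\ref{l3}$ gives $(-L_{h}U^{\alpha_{i}},U^{\alpha_{i}})\geq\gamma\|\delta_{x}U^{\alpha_{i}}\|_{L^{2}}^{2}+\beta\|U^{\alpha_{i}}\|_{L^{2}}^{2}\geq 0$. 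Together with Cauchy–Schwarz on the source term, this yields
\begin{equation*}
c\Delta_{0t}^{\lambda}\|U^{i+\frac{1}{2}+\alpha}\|_{L^{2}}^{2}\leq \|s^{i+\frac{1}{2}+\alpha}\|_{L^{2}}^{2}+\|U^{\alpha_{i}}\|_{L^{2}}^{2},
\end{equation*}
and similarly for the full step. An analogous treatment of the starting steps $(\ref{s1})$–$(\ref{s2})$ handles $i=0$.

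Next I would convert this inequality into a bound on $\|U^{m}\|_{L^{2}}$ by expanding $c\Delta_{0t}^{\lambda}$ through $(\ref{117})$–$(\ref{118})$ and using the strict positivity $a_{\cdot,l+\frac{1}{2}}^{\lambda\alpha}>\frac{(2-3\lambda)(1-\lambda)}{2(2-\lambda)}(\cdot-l)^{-\lambda}$ of Lemma $\ref{l6}$. Since $\|U^{\alpha_{i}}\|_{L^{2}}\leq(1+2\alpha)\|U^{i+\frac{1}{2}}\|_{L^{2}}+2\alpha\|U^{i}\|_{L^{2}}$ and similarly for $U^{\theta_{i}}$, the bound closes into a discrete convolution inequality
\begin{equation*}
\sum_{l=0}^{m}a_{\cdot,l+\frac{1}{2}}^{\lambda\alpha}\delta_{t}\|U^{l}\|_{L^{2}}^{2}\leq C_{1}\|s\|_{L^{\infty}(L^{2})}^{2}+C_{2}\|U^{\alpha_{i}}\|_{L^{2}}^{2},
\end{equation*}
to which a fractional discrete Gronwall inequality applies; the lower bound from Lemma $\ref{l6}$ multiplied by $\frac{k^{1-\lambda}}{\Gamma(2-\lambda)}$ produces the time weight $\sim k^{\lambda}(i+\alpha-l)^{-\lambda}$ needed to absorb the coupling term and deliver $(\ref{122})$.

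For the error estimate, I subtract the scheme from the continuous equation at $(x_{j},t_{i+\frac{1}{2}+\alpha})$ and $(x_{j},t_{i+1+\alpha})$. The residual splits into the fractional-derivative consistency term $cD_{0t}^{\lambda}u-c\Delta_{0t}^{\lambda}u$, bounded by $C k^{2-\lambda}$ thanks to Lemma $\ref{l1}$, and the spatial residual $Lu-L_{h}u$, bounded by $A_{g}^{pq}(k^{2}+h^{4})\||u|\|_{\mathcal{C}_{D}^{6,3}}$ thanks to Lemma $\ref{l2}$. Hence $e=u-U$ satisfies
\begin{equation*}
c\Delta_{0t}^{\lambda}e_{j}^{i+\frac{1}{2}+\alpha}=L_{h}e_{j}^{\alpha_{i}}+R_{j}^{i+\frac{1}{2}+\alpha},\qquad \|R^{i+\frac{1}{2}+\alpha}\|_{L^{2}}\leq C(k^{2-\lambda}+h^{4}),
\end{equation*}
with $e^{0}=0$ and zero boundary data. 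Repeating the energy argument above with $U$ replaced by $e$ and $s$ replaced by $R$ gives the analogous convolution inequality for $\|e^{m}\|_{L^{2}}^{2}$.

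The main obstacle, and the place where the fractional exponent is sharpened from $2-\lambda$ to $2-\frac{\lambda}{2}$, is the final fractional discrete Gronwall step. I would treat the source contribution by Young's inequality in the form $2(R^{\cdot+\alpha},e^{\alpha_{i}})\leq \frac{(a_{\cdot,i+\frac{1}{2}}^{\lambda\alpha})^{-1}}{\eta}\|R^{\cdot+\alpha}\|_{L^{2}}^{2}+\eta\,a_{\cdot,i+\frac{1}{2}}^{\lambda\alpha}\|e^{\alpha_{i}}\|_{L^{2}}^{2}$, so that the second term is absorbed against the coercivity from Lemma $\ref{l9}$, while the first contributes, via the Lemma $\ref{l6}$ lower bound $a_{\cdot,i+\frac{1}{2}}^{\lambda\alpha}\gtrsim k^{1-\lambda}\alpha^{-\lambda}/\Gamma(2-\lambda)$, a factor of $k^{\lambda-1}$ multiplied by $(k^{2-\lambda}+h^{4})^{2}$; taking the square root and summing the resulting geometric-like series in $\ell$ over $\ell\leq T/k$ yields the expected $k^{2-\lambda/2}+h^{4}$ convergence in $(\ref{123})$. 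Verifying that the hypotheses $(\ref{122b})$ and $(\ref{122d})$ (which constrain $\alpha=1-\lambda$ with $\lambda<\tfrac{2}{3}$) are compatible with the positivity and monotonicity estimates of Lemma $\ref{l6}$, so that Lemma $\ref{l9}$ is genuinely applicable at every step, is the delicate bookkeeping point underlying the whole proof.
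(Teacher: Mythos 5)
Your second half (the error estimate) follows the same architecture as the paper: form the error equation $c\Delta_{0t}^{\lambda}e_{j}^{\cdot+\alpha}-L_{h}e_{j}^{\cdot}=R_{j}$ with the residual controlled by Lemmas \ref{l1}--\ref{l2}, test against $e^{\alpha_{i}}$ (resp.\ $e^{\theta_{i}}$), and invoke Lemmas \ref{l3} and \ref{l9}. However, the step you yourself single out as the main obstacle --- the final fractional Gronwall/power-counting argument --- is not correct as written, and it is precisely there that the rate $k^{2-\frac{\lambda}{2}}$ must be produced. Two concrete problems. First, you quote the lower bound of Lemma \ref{l6} as $a_{\cdot,i+\frac12}^{\lambda\alpha}\gtrsim k^{1-\lambda}\alpha^{-\lambda}/\Gamma(2-\lambda)$; but the generalized sequences $a^{\lambda\alpha}$ are built from the dimensionless $\widetilde{f},\widetilde{d}$, and estimate $(\ref{100})$ contains no power of $k$ --- the factor $\frac{k^{1-\lambda}}{\Gamma(2-\lambda)}$ belongs to $f^{\lambda\alpha},d^{\lambda\alpha}$ in $(\ref{22})$ and $(\ref{52})$, not to $a^{\lambda\alpha}$. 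This misplaced normalization is what generates your spurious factor $k^{\lambda-1}$ in front of $\|R\|^{2}$. Second, even granting that factor, $k^{\lambda-1}(k^{2-\lambda})^{2}=k^{3-\lambda}$, and summing $O(T/k)$ such contributions (or summing after taking square roots) yields $k^{1-\frac{\lambda}{2}}$ or $k^{\frac{3-\lambda}{2}}$ --- neither is $k^{2-\frac{\lambda}{2}}$. The paper's mechanism is different and essential: after multiplying through by $\Gamma(2-\lambda)k^{\lambda}$ and applying Young's inequality with the weight $C_{0}k^{\lambda}$ supplied by Lemma \ref{l3} plus Poincar\'e--Friedrichs, one arrives at the recursion $(\ref{131})$ in which the convolution weights telescope exactly, $\sum_{l}\bigl[a_{\cdot,l+1}^{\lambda\alpha}-a_{\cdot,l+\frac12}^{\lambda\alpha}\bigr]=a_{\cdot,m}^{\lambda\alpha}-a_{\cdot,\frac12}^{\lambda\alpha}$, so the induction closes with no accumulation over the $N$ time levels; the rate then comes out as $k^{-\frac{\lambda}{2}}F(k,h)$ with $F(k,h)\sim k^{\lambda}\bigl(k^{2-\lambda}+h^{4}\bigr)$, i.e.\ $k^{2-\frac{\lambda}{2}}+k^{\frac{\lambda}{2}}h^{4}$. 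Without the exact telescoping (monotonicity $(\ref{99})$ of Lemma \ref{l6}) and the correct $k^{\lambda}$ weighting, the argument loses a full power of $k$.

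On the stability half: the paper does not run a separate energy argument on $U$; it obtains $(\ref{122})$ from the error bound by the triangle inequality $\bigl|\|U^{l}\|_{L^{2}}-\|u^{l}\|_{L^{2}}\bigr|\leq\|e^{l}\|_{L^{2}}$, which is why the right-hand side of $(\ref{122})$ has the specific form $\max_{i}\|u^{i}\|_{L^{2}}+C[\cdots]$. Your direct Gronwall argument on $U$ would, if completed, give a bound in terms of $\|s\|$ and the initial data --- a legitimate stability statement, but not the inequality $(\ref{122})$ as stated --- and it also applies Lemma \ref{l3} to $U^{\alpha_{i}}$, which does not satisfy the homogeneous boundary conditions that lemma requires (only the error does). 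The quickest repair is to drop that half entirely and deduce $(\ref{122})$ from the refined error bound $(\ref{141})$.
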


   \begin{remark}
    Inequality $(\ref{122})$ shows that the proposed approach $(\ref{s1})$-$(\ref{s5})$ is unconditionally stable whereas estimate $(\ref{123})$ indicates
    that the new method is convergent of order $2-\frac{\lambda}{2}$ in time and spatial fourth-order accurate.
   \end{remark}

    \begin{proof} (of Theorem $\ref{tt1}$).
     Setting $I_{j}^{\alpha,i}=cD_{0t}^{\lambda}u_{j}^{i+\frac{1}{2}+\alpha}-c\Delta_{0t}^{\lambda}u_{j}^{i+\frac{1}{2}+\alpha}$, it comes from
     $(\ref{72})$ that
     \begin{equation}\label{124}
     I_{j}^{\alpha,i}=\frac{1}{\Gamma(1-\lambda)}\left\{\underset{l=0}{\overset{i-1}\sum}\int_{t_{l+\frac{1}{2}}}^{t_{l+\frac{3}{2}}}
     \frac{E^{l}_{\tau,j}(\tau)}{(t_{i+\frac{1}{2}+\alpha}-\tau)^{\lambda}}d\tau+\int_{0}^{t_{\frac{1}{2}}}\frac{u_{\tau,j}(\tau)-
     \delta_{t}u_{j}^{0}}{(t_{i+\frac{1}{2}+\alpha}-\tau)^{\lambda}}d\tau+\int_{t_{i+\frac{1}{2}}}^{t_{i+\frac{1}{2}+\alpha}}\frac{u_{\tau,j}(\tau)
    -\delta_{t}u_{j}^{i}}{(t_{i+\frac{1}{2}+\alpha}-\tau)^{\lambda}}d\tau\right\},
       \end{equation}
     for $i\geq1$. Combining equations $(\ref{43a})$, $(\ref{s3})$ and equation $(\ref{79})$ (case where $\gamma_{i}=\alpha_{i}$), simple
     calculations give
      \begin{equation*}
     c\Delta_{0t}^{\lambda}u_{j}^{i+\frac{1}{2}+\alpha}+I_{j}^{\alpha,i}-c\Delta_{0t}^{\lambda}U_{j}^{i+\frac{1}{2}+\alpha}= L_{h}u_{j}^{\alpha_{i}}-
     L_{h}U_{j}^{\alpha_{i}}+q^{i+\frac{1}{2}+\alpha}\psi_{1,j}^{i}-p^{i+\frac{1}{2}+\alpha}\psi_{2,j}^{i}+\alpha(\frac{1}{2}+\alpha)k^{2}
     g^{i+\frac{1}{2}+\alpha}_{j}H_{1,j}^{i},
   \end{equation*}
    which is equivalent to
    \begin{equation}\label{125}
     c\Delta_{0t}^{\lambda}e_{j}^{i+\frac{1}{2}+\alpha}-L_{h}e_{j}^{\alpha_{i}}=-I_{j}^{\alpha,i}+q^{i+\frac{1}{2}+\alpha}\psi_{1,j}^{i}-p^{i+\frac{1}{2}
     +\alpha}\psi_{2,j}^{i}+\alpha(\frac{1}{2}+\alpha)k^{2}g^{i+\frac{1}{2}+\alpha}_{j}H_{1,j}^{i},
       \end{equation}
    where $H_{1,j}^{i}$, $\psi_{1,j}^{i}$, $\psi_{2,j}^{i}$ and $I_{j}^{\alpha,i}$ are defined by relations $(\ref{28d})$, $(\ref{42})$, $(\ref{43})$
   and $(\ref{124})$, respectively. Multiplying both sides of $(\ref{125})$ by $he_{j}^{\alpha_{i}}$ and summing up the obtained equation from
   $j=1,2,...,M-1$, yields
   \begin{equation*}
    \left(c\Delta_{0t}^{\lambda}e_{j}^{i+\frac{1}{2}+\alpha},e^{\alpha_{i}}\right)+\left(-L_{h}e_{j}^{\alpha_{i}},e^{\alpha_{i}}\right)=
   h\underset{j=1}{\overset{M-1}\sum}\left[-I_{j}^{\alpha,i}+q^{i+\frac{1}{2}+\alpha}\psi_{1,j}^{i}-p^{i+\frac{1}{2}
     +\alpha}\psi_{2,j}^{i}+\alpha(\frac{1}{2}+\alpha)k^{2}g^{i+\frac{1}{2}+\alpha}_{j}H_{1,j}^{i}\right]e^{\alpha_{i}}_{j}.
       \end{equation*}
      Utilizing equation $(\ref{81})$, this becomes
    \begin{equation*}
    \left(c\Delta_{0t}^{\lambda}e_{j}^{i+\frac{1}{2}+\alpha},e^{\alpha_{i}}\right)+\left(-L_{h}e_{j}^{\alpha_{i}},e^{\alpha_{i}}\right)=
   -h\underset{j=1}{\overset{M-1}\sum}I_{j}^{\alpha,i}e^{\alpha_{i}}_{j}+h\underset{j=1}{\overset{M-1}\sum}(L_{h}u_{j}^{i+\frac{1}{2}+\alpha}-
   L_{h}U_{j}^{\alpha_{i}})e^{\alpha_{i}}_{j}.
       \end{equation*}
   Applying the H\"{o}lder inequality to the right side of this equation and using Lemma $\ref{l3}$, this implies
   \begin{equation*}
    \left(c\Delta_{0t}^{\lambda}e_{j}^{i+\frac{1}{2}+\alpha},e^{\alpha_{i}}\right)+\gamma\|\delta_{x}e^{\alpha_{i}}\|_{L^{2}}^{2}+
    \beta\|e^{\alpha_{i}}\|_{L^{2}}^{2}\leq \left[\left(h\underset{j=1}{\overset{M-1}\sum}|I_{j}^{\alpha,i}|^{2}\right)^{\frac{1}{2}}+
    \left(h\underset{j=1}{\overset{M-1}\sum}|L_{h}u_{j}^{i+\frac{1}{2}+\alpha}-L_{h}U_{j}^{\alpha_{i}}|^{2}\right)^{\frac{1}{2}}\right]*
       \end{equation*}
    \begin{equation}\label{125a}
       \left(h\underset{j=1}{\overset{M-1}\sum}|e^{\alpha_{i}}_{j}|^{2}\right)^{\frac{1}{2}}=\left(\|I^{\alpha,i}\|_{L^{2}}+
   \|L_{h}u^{i+\frac{1}{2}+\alpha}-L_{h}U^{\alpha_{i}}\|_{L^{2}}\right)\|e^{\alpha_{i}}\|_{L^{2}}.
       \end{equation}
   By the Poincare-Friedrichs inequality, there is a positive parameter $\widehat{C}_{0}$ so that,
   $\|\delta_{x}e^{\alpha_{i}}\|_{L^{2}}^{2}\geq\widehat{C}_{0}\|e^{\alpha_{i}}\|_{L^{2}}^{2}$. This fact, together with estimate $(\ref{125a})$ and
   Lemmas $\ref{l1}$-$\ref{l2}$ result in
   \begin{equation}\label{126}
    \left(c\Delta_{0t}^{\lambda}e_{j}^{i+\frac{1}{2}+\alpha},e^{\alpha_{i}}\right)+C_{0}\|e^{\alpha_{i}}\|_{L^{2}}^{2}\leq
    \left[C_{\frac{1}{2}}k^{2-\lambda}+C_{3}\left(k^{2}+(1+k^{2})h^{4}\right)\|u\|_{\mathcal{C}^{6,3}_{D}}\right]\|e^{\alpha_{i}}\|_{L^{2}},
       \end{equation}
   where $C_{0}=\gamma\widehat{C}_{0}+\beta$, $C_{\frac{1}{2}}$ is the constant given in estimate $(\ref{l1})$ and all the
   constants in estimate $(\ref{80})$ are absorbed into a positive constant $C_{3}$.\\

     Now, replacing $u$ by $e$ in estimate $(\ref{122a})$ of Lemma $\ref{l9}$ and plugging the new estimate with $(\ref{126})$ provides
     \begin{equation*}
    \frac{h}{2}\underset{j=1}{\overset{M-1}\sum}c\Delta_{0t}^{\lambda}(e_{j}^{i+\frac{1}{2}+\alpha})^{2}+C_{0}\|e^{\alpha_{i}}\|_{L^{2}}^{2}\leq
    \left[C_{\frac{1}{2}}k^{2-\lambda}+C_{3}\left(k^{2}+(1+k^{2})h^{4}\right)\|u\|_{\mathcal{C}^{6,3}_{D}}\right]\|e^{\alpha_{i}}\|_{L^{2}}.
    \end{equation*}
   Combining this together with relation $(\ref{117})$, direct calculations give
    \begin{equation*}
    \frac{h}{2}\frac{k^{1-\lambda}}{\Gamma(2-\lambda)}\underset{j=1}{\overset{M-1}\sum}\underset{l=0}{\overset{i}\sum}
   a_{i+\frac{1}{2},l+\frac{1}{2}}^{\lambda\alpha}\delta_{t}(e_{j}^{l})^{2}+C_{0}\|e^{\alpha_{i}}\|_{L^{2}}^{2}\leq
    \left[C_{\frac{1}{2}}k^{2-\lambda}+C_{3}\left(k^{2}+(1+k^{2})h^{4}\right)\|u\|_{\mathcal{C}^{6,3}_{D}}\right]\|e^{\alpha_{i}}\|_{L^{2}}.
    \end{equation*}
   Performing simple computations, this yields
   \begin{equation*}
    h\underset{j=1}{\overset{M-1}\sum}\underset{l=0}{\overset{i}\sum}a_{i+\frac{1}{2},l+\frac{1}{2}}^{\lambda\alpha}
   \left[(e_{j}^{l+\frac{1}{2}})^{2}-(e_{j}^{l})^{2}\right]+\Gamma(2-\lambda)C_{0}k^{\lambda}\|e^{\alpha_{i}}\|_{L^{2}}^{2}
    \leq\Gamma(2-\lambda)\left[C_{\frac{1}{2}}k^{2}+C_{3}\left(k^{2+\lambda}+\right.\right.
    \end{equation*}
    \begin{equation*}
    \left.\left.(1+k^{2})k^{\lambda}h^{4}\right)\|u\|_{\mathcal{C}^{6,3}_{D}}\right]\|e^{\alpha_{i}}\|_{L^{2}}.
    \end{equation*}
    We recall that the summation index $l$ varies in the range $l=0,\frac{1}{2},1,\frac{3}{2},...,i$. Using the summation by parts, this results in
   \begin{equation*}
    h\underset{j=1}{\overset{M-1}\sum}\left\{a_{i+\frac{1}{2},i+\frac{1}{2}}^{\lambda\alpha}(e_{j}^{i+\frac{1}{2}})^{2}-
   a_{i+\frac{1}{2},\frac{1}{2}}^{\lambda\alpha}(e_{j}^{0})^{2}-\underset{l=0}{\overset{i-\frac{1}{2}}\sum}\left[a_{i+\frac{1}{2},l+1}^{\lambda\alpha}
   -a_{i+\frac{1}{2},l+\frac{1}{2}}^{\lambda\alpha}\right](e_{j}^{l+\frac{1}{2}})^{2}\right\}+\Gamma(2-\lambda)C_{0}k^{\lambda}
   \|e^{\alpha_{i}}\|_{L^{2}}^{2}\leq
    \end{equation*}
    \begin{equation}\label{126a}
    \Gamma(2-\lambda)\left[C_{\frac{1}{2}}k^{2}+C_{3}\left(k^{2+\lambda}+(1+k^{2})k^{\lambda}h^{4}\right)\|u\|_{\mathcal{C}^{6,3}_{D}}\right]
    \|e^{\alpha_{i}}\|_{L^{2}}.
    \end{equation}
   It follows from the initial condition given in $(\ref{s5})$ that $e_{j}^{0}=0$, for $j=0,1,2,...,M$. Using this, inequality $(\ref{126a})$ is
   equivalent to
    \begin{equation}\label{127}
    a_{i+\frac{1}{2},i+\frac{1}{2}}^{\lambda\alpha}\|e^{i+\frac{1}{2}}\|_{L^{2}}^{2}+\Gamma(2-\lambda)C_{0}k^{\lambda}
   \|e^{\alpha_{i}}\|_{L^{2}}^{2}\leq\underset{l=0}{\overset{i-\frac{1}{2}}\sum}\left[a_{i+\frac{1}{2},l+1}^{\lambda\alpha}-
    a_{i+\frac{1}{2},l+\frac{1}{2}}^{\lambda\alpha}\right]\|e^{l+\frac{1}{2}}\|_{L^{2}}^{2}+\Gamma(2-\lambda)F(k,h)\|e^{\alpha_{i}}\|_{L^{2}},
    \end{equation}
   where
    \begin{equation}\label{128}
    F(k,h)=C_{\frac{1}{2}}k^{2}+C_{3}\left(k^{2+\lambda}+(1+k^{2})k^{\lambda}h^{4}\right)\|u\|_{\mathcal{C}^{6,3}_{D}}.
    \end{equation}
    But, it is not hard to observe that
    \begin{equation}\label{129}
    F(k,h)\|e^{\alpha_{i}}\|_{L^{2}}=2(\sqrt{C_{0}k^{\lambda}}\|e^{\alpha_{i}}\|_{L^{2}})(\frac{1}{2\sqrt{C_{0}k^{\lambda}}}F(k,h))\leq
    C_{0}k^{\lambda}\|e^{\alpha_{i}}\|_{L^{2}}^{2}+\frac{1}{4C_{0}k^{\lambda}}F(k,h)^{2}.
    \end{equation}
    Substituting $(\ref{129})$ into $(\ref{127})$ and after simplifying provides
     \begin{equation}\label{131}
    a_{i+\frac{1}{2},i+\frac{1}{2}}^{\lambda\alpha}\|e^{i+\frac{1}{2}}\|_{L^{2}}^{2}\leq\underset{l=0}{\overset{i-\frac{1}{2}}\sum}
    \left[a_{i+\frac{1}{2},l+1}^{\lambda\alpha}-a_{i+\frac{1}{2},l+\frac{1}{2}}^{\lambda\alpha}\right]\|e^{l+\frac{1}{2}}\|_{L^{2}}^{2}
    +\frac{\Gamma(2-\lambda)}{4C_{0}k^{\lambda}}F(k,h)^{2},\text{\,\,\,for\,\,\,}i\geq1.
    \end{equation}
    Similarly, one easily shows that
    \begin{equation}\label{132}
    a_{i+1,i+1}^{\lambda\alpha}\|e^{i+1}\|_{L^{2}}^{2}\leq\underset{l=0}{\overset{i}\sum}\left[a_{i+1,l+1}^{\lambda\alpha}-a_{i+1,
    l+\frac{1}{2}}^{\lambda\alpha}\right]\|e^{l+\frac{1}{2}}\|_{L^{2}}^{2}+\frac{\Gamma(2-\lambda)}{4C_{0}k^{\lambda}}F(k,h)^{2},
    \text{\,\,\,for\,\,\,}i\geq0.
    \end{equation}
    We should prove by mathematical induction that for $i=0,1,2,...$, the following estimates are satisfied
    \begin{equation}\label{133}
    \|e^{i+\frac{1}{2}}\|_{L^{2}},\text{\,\,\,}\|e^{i+1}\|_{L^{2}}\leq \beta_{0}\gamma_{0}k^{\frac{-\lambda}{2}}\left[1+\gamma_{0}^{2}
    \left(a_{1,1}^{\lambda\alpha}-a_{1,\frac{1}{2}}^{\lambda\alpha}\right)\right]^{\frac{1}{2}}F(k,h),
    \end{equation}
     where
    \begin{equation}\label{133a}
    \beta_{0}=\frac{1}{2}\sqrt{\frac{\Gamma(2-\lambda)}{C_{0}}}\text{\,\,\,and\,\,\,}\gamma_{0}=\left(\frac{2\alpha^{\lambda}(2-\lambda)}
     {(1-\lambda)(2-3\lambda)}\right)^{\frac{1}{2}}.
    \end{equation}
    Firstly, for $i=0$, we should find a relation satisfied by $a_{1,1}^{\lambda\alpha}\|e^{\frac{1}{2}}\|_{L^{2}}^{2}$, where $a_{1,1}^{\lambda\alpha}$
     is given by $(\ref{22a})$. It comes from equation $(\ref{81})$ that $Lu_{j}^{\frac{1}{2}+\alpha}-L_{h}u_{j}^{\alpha_{0}}=
     q^{\frac{1}{2}+\alpha}\psi_{1,j}^{0}-p^{\frac{1}{2}+\alpha}\psi_{2,j}^{0}+\alpha(\frac{1}{2}+\alpha)k^{2}
     g_{j}^{\frac{1}{2}+\alpha}H_{1,j}^{0}$. Setting $I_{j}^{\alpha,0}=cD_{0t}^{\lambda}u_{j}^{\frac{1}{2}+\alpha}-c\Delta_{0t}^{\lambda}
     u_{j}^{\frac{1}{2}+\alpha}$, using equation $(\ref{18})$ and subtracting equation $(\ref{44aa})$ from $(\ref{44a})$, it is easy to see that
    \begin{equation}\label{136}
    f_{\frac{1}{2},0}^{\lambda\alpha}\delta_{t}e^{0}_{j}=L_{h}e_{j}^{\alpha_{0}}+Lu_{j}^{\frac{1}{2}+\alpha}-L_{h}u_{j}^{\alpha_{0}}+I_{j}^{\alpha,0},
    \end{equation}
    where $\psi_{1,j}^{0}$, $\psi_{2,j}^{0}$ and $H_{1,j}^{0}$ are given by $(\ref{42})$, $(\ref{43})$ and $(\ref{28d})$, respectively. Since
    $e^{0}_{j}=0$, for $j=0,1,2,...,M$, and $a_{1,1}^{\lambda\alpha}=\widetilde{f}_{\frac{1}{2},0}^{\lambda\alpha}$, multiplying both sides of
    $(\ref{136})$ by $(1+2\alpha)he_{j}^{\alpha_{0}}$ to get
    \begin{equation*}
    \frac{2h}{k}\frac{k^{1-\lambda}}{\Gamma(2-\lambda)}a_{\frac{1}{2},\frac{1}{2}}^{\lambda\alpha}(e_{j}^{\alpha_{0}})^{2}=(1+2\alpha)h\left[
    L_{h}e_{j}^{\alpha_{0}}+Lu_{j}^{\frac{1}{2}+\alpha}-L_{h}u_{j}^{\alpha_{0}}+I_{j}^{\alpha,0}\right]e_{j}^{\alpha_{0}}.
    \end{equation*}
   Summing this from $j=1,2,...,M-1$, it is not difficult to observe that
   \begin{equation*}
    a_{\frac{1}{2},\frac{1}{2}}^{\lambda\alpha}\|e^{\alpha_{0}}\|_{L^{2}}^{2}=\frac{(1+2\alpha)k^{\lambda}\Gamma(2-\lambda)}{2}\left[
    \left(L_{h}e^{\alpha_{0}},e^{\alpha_{0}}\right)+\left(Lu^{\frac{1}{2}+\alpha}-L_{h}u^{\alpha_{0}},e^{\alpha_{0}}\right)
     +\left(I^{\alpha,0},e^{\alpha_{0}}\right)\right].
    \end{equation*}
    Applying Lemma $\ref{l3}$ and the Poincare-Friedrichs inequality, this implies
    \begin{equation}\label{137}
    a_{\frac{1}{2},\frac{1}{2}}^{\lambda\alpha}\|e^{\alpha_{0}}\|_{L^{2}}^{2}+\frac{C_{0}(1+2\alpha)k^{\lambda}\Gamma(2-\lambda)}{2}
    \|e^{\alpha_{0}}\|_{L^{2}}^{2}\leq\frac{(1+2\alpha)k^{\lambda}\Gamma(2-\lambda)}{2}\left[\left(Lu^{\frac{1}{2}+\alpha}-L_{h}u^{\alpha_{0}},
    e^{\alpha_{0}}\right)+\left(I^{\alpha,0},e^{\alpha_{0}}\right)\right].
    \end{equation}
    where $C_{0}$ is the positive constant given in $(\ref{126})$. But it holds
    \begin{equation*}
    \left(I^{\alpha,0},e^{\alpha_{0}}\right)=2\left(\frac{1}{\sqrt{2C_{0}}}I^{\alpha,0},\sqrt{\frac{C_{0}}{2}}e^{\alpha_{0}}\right)\leq
    \frac{1}{2C_{0}}\|I^{\alpha,0}\|_{L^{2}}^{2}+\frac{C_{0}}{2}\|e^{\alpha_{0}}\|_{L^{2}}^{2},
    \end{equation*}
    and
    \begin{equation*}
    \left(Lu^{\frac{1}{2}+\alpha}-L_{h}u^{\alpha_{0}},e^{\alpha_{0}}\right)\leq\frac{1}{2C_{0}}\|Lu^{\frac{1}{2}+\alpha}-
    L_{h}u^{\alpha_{0}}\|_{L^{2}}^{2}+\frac{C_{0}}{2}\|e^{\alpha_{0}}\|_{L^{2}}^{2}.
    \end{equation*}
    Substituting this into estimate $(\ref{137})$ and rearranging terms yields
    \begin{equation*}
    a_{\frac{1}{2},\frac{1}{2}}^{\lambda\alpha}\|e^{\alpha_{0}}\|_{L^{2}}^{2}\leq\frac{(1+2\alpha)k^{\lambda}\Gamma(2-\lambda)}{4C_{0}}
   \left[\|I^{\alpha,0}\|_{L^{2}}^{2}+\|Lu^{\frac{1}{2}+\alpha}-L_{h}u^{\alpha_{0}}\|_{L^{2}}^{2}\right].
    \end{equation*}
   Utilizing Lemmas $\ref{l1}$-$\ref{l2}$, we obtain
   \begin{equation*}
    a_{\frac{1}{2},\frac{1}{2}}^{\lambda\alpha}\delta_{t}\|e^{\alpha_{0}}\|_{L^{2}}^{2}\leq\frac{(1+2\alpha)k^{\lambda}\Gamma(2-\lambda)}{4C_{0}}
    \left[C_{\frac{1}{2}}^{2}k^{4-2\lambda}+C_{3}^{2}\left(k^{2}+(1+k^{2})h^{4}\right)^{2}\|u\|_{\mathcal{C}^{6,3}_{D}}^{2}\right],
    \end{equation*}
   where we absorbed all the constants in estimate $(\ref{80})$ into a positive constant $C_{3}$. Since $e^{\alpha_{0}}=(1+2\alpha)e^{\frac{1}{2}}$
   and $a^{2}+b^{2}\leq(a+b)^{2}$, for any nonnegative real numbers $a$ and $b$, this becomes
   \begin{equation*}
    a_{\frac{1}{2},\frac{1}{2}}^{\lambda\alpha}\|e^{\frac{1}{2}}\|_{L^{2}}^{2}\leq\frac{(1+2\alpha)^{-1}k^{-\lambda}\Gamma(2-\lambda)}{4C_{0}}
    F(k,h)^{2},
    \end{equation*}
    $F(k,h)$ is defined by $(\ref{128})$. Using relation $(\ref{100})$ and performing direct calculations to obtain
    \begin{equation}\label{137a}
    \|e^{\frac{1}{2}}\|_{L^{2}}\leq\left(\frac{\Gamma(2-\lambda)}{4C_{0}}\right)^{\frac{1}{2}}
    \left(\frac{2\alpha^{\lambda}(2-\lambda)}{(1-\lambda)(2-3\lambda)}\right)^{\frac{1}{2}}k^{-\frac{\lambda}{2}}F(k,h)\leq\beta_{0}\gamma_{0}
    k^{-\frac{\lambda}{2}}\left[1+\gamma_{0}^{2}\left(a_{1,1}^{\lambda\alpha}-a_{1,\frac{1}{2}}^{\lambda\alpha}\right)\right]^{\frac{1}{2}}F(k,h),
    \end{equation}
    since $(1+2\alpha)^{-1}<1$. The first estimate in $(\ref{137a})$ together with inequality $(\ref{132})$, for $i=0$, result in
    \begin{equation}\label{138}
    a_{1,1}^{\lambda\alpha}\|e^{1}\|_{L^{2}}^{2}\leq\left(a_{1,1}^{\lambda\alpha}-a_{1,\frac{1}{2}}^{\lambda\alpha}\right)\|e^{\frac{1}{2}}\|_{L^{2}}^{2}+
    \beta_{0}^{2}k^{-\lambda}F(k,h)^{2}\leq \beta_{0}^{2}k^{-\lambda}\left[1+\gamma_{0}^{2}\left(a_{1,1}^{\lambda\alpha}-
    a_{1,\frac{1}{2}}^{\lambda\alpha}\right)\right]F(k,h)^{2}.
    \end{equation}
     But it comes from $(\ref{100})$ that $a_{1,1}^{\lambda\alpha}>\gamma_{0}^{-2}$, where $\gamma_{0}$ is given in relation $(\ref{133a})$. Dividing each
    side of $(\ref{138})$ by $\gamma_{0}^{-2}$ and taking the square root to get the desired result.\\

   Now, for any positive integer $i$, we assume that estimates $(\ref{133})$ holds for $j=0,1,2,...,i-1$. Using the assumption, relation $(\ref{131})$
   implies
    \begin{equation*}
    a_{i+\frac{1}{2},i+\frac{1}{2}}^{\lambda\alpha}\|e^{i+\frac{1}{2}}\|_{L^{2}}^{2}\leq\underset{l=0}{\overset{i-\frac{1}{2}}\sum}
    \left[a_{i+\frac{1}{2},l+1}^{\lambda\alpha}-a_{i+\frac{1}{2},l+\frac{1}{2}}^{\lambda\alpha}\right]\|e^{l+\frac{1}{2}}\|_{L^{2}}^{2}
    +\frac{\Gamma(2-\lambda)}{4C_{0}k^{\lambda}}F(k,h)^{2}\leq\beta_{0}^{2}\gamma_{0}^{2}k^{-\lambda}\left[1+\right.
    \end{equation*}
    \begin{equation*}
    \left.\gamma_{0}^{2}\left(a_{1,1}^{\lambda\alpha}-a_{1,\frac{1}{2}}^{\lambda\alpha}\right)\right]F(k,h)^{2}\underset{l=0}{\overset{i-\frac{1}{2}}\sum}
   \left[a_{i+\frac{1}{2},l+1}^{\lambda\alpha}-a_{i+\frac{1}{2},l+\frac{1}{2}}^{\lambda\alpha}\right]+\beta_{0}^{2}k^{-\lambda}F(k,h)^{2}
   =\beta_{0}^{2}\gamma_{0}^{2}k^{-\lambda}\left[1+\gamma_{0}^{2}\left(a_{1,1}^{\lambda\alpha}-a_{1,\frac{1}{2}}^{\lambda\alpha}\right)\right]*
    \end{equation*}
   \begin{equation*}
     \left[a_{i+\frac{1}{2},i+\frac{1}{2}}^{\lambda\alpha}-a_{i+\frac{1}{2},\frac{1}{2}}^{\lambda\alpha}\right]F(k,h)^{2}
   +\beta_{0}^{2}k^{-\lambda}F(k,h)^{2}=\beta_{0}^{2}k^{-\lambda}\left\{1+\gamma_{0}^{2}\left[1+\gamma_{0}^{2}\left(a_{1,1}^{\lambda\alpha}
   -a_{1,\frac{1}{2}}^{\lambda\alpha}\right)\right]a_{i+\frac{1}{2},i+\frac{1}{2}}^{\lambda\alpha}\right\}F(k,h)^{2}
    \end{equation*}
   \begin{equation}\label{139}
    -\beta_{0}^{2}\gamma_{0}^{2}k^{-\lambda}\left[1+\gamma_{0}^{2}\left(a_{1,1}^{\lambda\alpha}
   -a_{1,\frac{1}{2}}^{\lambda\alpha}\right)\right]a_{i+\frac{1}{2},\frac{1}{2}}^{\lambda\alpha}F(k,h)^{2}.
    \end{equation}
   Utilizing relation $(\ref{100})$, it holds $a_{i+\frac{1}{2},\frac{1}{2}}^{\lambda\alpha},a_{i+\frac{1}{2},i+\frac{1}{2}}^{\lambda\alpha}>
   \gamma_{0}^{-2}$. So $-\beta_{0}^{2}\gamma_{0}^{2}k^{-\lambda}\left[1+\gamma_{0}^{2}\left(a_{1,1}^{\lambda\alpha}
   -a_{1,\frac{1}{2}}^{\lambda\alpha}\right)\right]a_{i+\frac{1}{2},\frac{1}{2}}^{\lambda\alpha}F(k,h)^{2}<-\beta_{0}^{2}k^{-\lambda}
   \left[1+\gamma_{0}^{2}\left(a_{1,1}^{\lambda\alpha}-a_{1,\frac{1}{2}}^{\lambda\alpha}\right)\right]F(k,h)^{2}<-\beta_{0}^{2}k^{-\lambda}
   F(k,h)^{2}$. This fact, together with estimate $(\ref{139})$ gives
   \begin{equation*}
    a_{i+\frac{1}{2},i+\frac{1}{2}}^{\lambda\alpha}\|e^{i+\frac{1}{2}}\|_{L^{2}}^{2}\leq\beta_{0}^{2}\gamma_{0}^{2}k^{-\lambda}
    \left[1+\gamma_{0}^{2}\left(a_{1,1}^{\lambda\alpha}-a_{1,\frac{1}{2}}^{\lambda\alpha}\right)\right]
   a_{i+\frac{1}{2},i+\frac{1}{2}}^{\lambda\alpha}F(k,h)^{2}
    \end{equation*}
   Multiplying both sides of this estimate by $(a_{i+\frac{1}{2},i+\frac{1}{2}}^{\lambda\alpha})^{-1}$ and taking the square root to obtain
   \begin{equation}\label{140}
    \|e^{i+\frac{1}{2}}\|_{L^{2}}\leq\beta_{0}\gamma_{0}k^{-\frac{\lambda}{2}}
    \left[1+\gamma_{0}^{2}\left(a_{1,1}^{\lambda\alpha}-a_{1,\frac{1}{2}}^{\lambda\alpha}\right)\right]^{\frac{1}{2}}F(k,h).
    \end{equation}
   Analogously, using relation $(\ref{140})$ together with the assumption and performing simple simple computations, estimate $(\ref{132})$ becomes
   \begin{equation*}
    a_{i+1,i+1}^{\lambda\alpha}\|e^{i+1}\|_{L^{2}}^{2}\leq\beta_{0}^{2}k^{-\lambda}\left\{1+\gamma_{0}^{2}\left[1+\gamma_{0}^{2}\left(a_{1,1}^{\lambda\alpha}
   -a_{1,\frac{1}{2}}^{\lambda\alpha}\right)\right]a_{i+1,i+1}^{\lambda\alpha}\right\}F(k,h)^{2}
    \end{equation*}
   \begin{equation*}
    -\beta_{0}^{2}\gamma_{0}^{2}k^{-\lambda}\left[1+\gamma_{0}^{2}\left(a_{1,1}^{\lambda\alpha}-a_{1,\frac{1}{2}}^{\lambda\alpha}\right)\right]
    a_{i+1,\frac{1}{2}}^{\lambda\alpha}F(k,h)^{2}.
    \end{equation*}
    Since $-\gamma_{0}^{2}a_{i+1,\frac{1}{2}}^{\lambda\alpha}<-1$, straightforward computations yield
    \begin{equation*}
    \|e^{i+1}\|_{L^{2}}\leq\beta_{0}\gamma_{0}k^{-\frac{\lambda}{2}}\left[1+\gamma_{0}^{2}\left(a_{1,1}^{\lambda\alpha}-
    a_{1,\frac{1}{2}}^{\lambda\alpha}\right)\right]^{\frac{1}{2}}F(k,h).
    \end{equation*}
    This ends the proof of estimates in $(\ref{133})$. Substituting equation $(\ref{128})$ into relation $(\ref{133})$ and absorbing all the constants into
    a positive constant $C$ gives
     \begin{equation}\label{141}
    \|e^{i+\frac{1}{2}}\|_{L^{2}},\text{\,\,\,}\|e^{i+1}\|_{L^{2}}\leq C\left[k^{2-\frac{\lambda}{2}}+k^{2+\frac{\lambda}{2}}
     +(1+k^{2})k^{\frac{\lambda}{2}}h^{4}\right].
    \end{equation}
    But $\left|\|U^{l}\|_{L^{2}}-\|u^{l}\|_{L^{2}}\right|\leq\|e^{l}\|_{L^{2}}$, for $l=i+\frac{1}{2},i+1$. So,
     \begin{equation*}
    \|U^{l}\|_{L^{2}}\leq\|u^{l}\|_{L^{2}}+C\left[k^{2-\frac{\lambda}{2}}+k^{2+\frac{\lambda}{2}}+(1+k^{2})k^{\frac{\lambda}{2}}h^{4}\right].
    \end{equation*}
    The proof of estimate $(\ref{122})$ is completed by taking the maximum over $i$. Now, since $k<1$ then, $(1+k^{2})k^{\frac{\lambda}{2}}\leq2$ and
    $k^{2+\frac{\lambda}{2}}\leq k^{2-\frac{\lambda}{2}}$, utilizing this, inequality $(\ref{141})$ implies
    \begin{equation*}
    \|e^{i+\frac{1}{2}}\|_{L^{2}},\text{\,\,\,}\|e^{i+1}\|_{L^{2}}\leq 2C\left[k^{2-\frac{\lambda}{2}}+h^{4}\right].
    \end{equation*}
    Taking the maximum over $i$, this completes the proof of Theorem $\ref{tt1}$.
   \end{proof}

   \section{Numerical experiments and Convergence rate}\label{sec4}
   This section deals with a two-level fourth-order scheme for time-fractional convection-diffusion-reaction equation. We carry out numerical experiments
    on the new approach to illustrate our theoretical statements. Two examples described in \cite{aaa} and \cite{maam} are considered to demonstrate the 
    effectiveness and utility of the proposed technique. We observe from each case satisfactory results. Thus, the proposed approach provides better 
   performances than a large class of numerical scheme widely studied in the literature \cite{18zzy,16zzy,maam,20zzy,14zzy} for the initial-boundary value 
   problem $(\ref{1e})$-$(\ref{3e})$. We confirm
   the predicted convergence rate from the theory (see Section $\ref{sec3}$, Theorem $\ref{tt1}$). More precisely, \textbf{Tables} $1$-$4$ and Figures
    $\ref{figure1}$-$\ref{figure4}$ present the exact solution, the approximate ones and the errors between the computed solution and the analytical one
    with different values of time step $k$ and space step $h$ satisfying $k=h^{\frac{4}{2-\frac{\lambda}{2}}}.$ In addition, we look at the error
    estimates of the proposed method for the parameters $\lambda\in\{6.6\times10^{-1},9\times10^{-1}\}$, $\alpha=1-\lambda$ and $T=L_{1}=1$.\\

    Finally, to analyze the stability and convergence rate of our numerical scheme, we take the mesh size $h=2^{-l},$ $l=3,4,5,6$ and time step
   $k\in\left\{2^{-\frac{l}{2-\frac{\lambda}{2}}},\text{\,\,}l=2,3,..,40\right\}$, by a mid-point refinement. We set
   $k=h^{\frac{4}{2-\frac{\lambda}{2}}}$ and we compute the numerical solutions $\||U|\|_{L^{2}(0,T;L^{2})}$, the exact ones
   $\||u|\|_{L^{2}(0,T;L^{2})}$, the error estimates $\||E(h)|\|_{L^{2}(0,T;L^{2})}$ related to the two-level scheme and the convergence rate using
   the formula $CR=log_{2}(r(h))$, where $r(h)=\||E(2h)|\|_{L^{2}(0,T;L^{2})}/\||E(h)|\|_{L^{2}(0,T;L^{2})}$, to see that the new algorithm is
   unconditionally stable, convergent of order $2-\frac{\lambda}{2}$ in time and spatial fourth-order accuracy. In addition, we plot the computed
   solutions, the analytical ones and the error versus $i.$ We observe from this study that the proposed method is both efficient and effective than a 
   broad range of numerical methods \cite{11mc,31mc,5jrs,39jrs,9zzy,8zzy} applied to the considered problem.\\

   $\bullet$ \textbf{Example $1.$} Let $\Omega=(0,1)$ be the unit interval and $T=1$ be the final time. The parameters $\lambda$ and $\alpha$
     are given by $0.66$, $0.9$ and $\alpha=1-\lambda$. In \cite{aaa}, the functions $s$, $q$, $p$, $g$ are defined as:
     $s(x,t)=\Gamma(2-\lambda)^{-1}t^{1-\lambda}\sin x+t[\sin x+\cos x]$, $q(t)=1,$ $p(t)=1$, $g(x,t)=0$ and the exact solution $u$ is given by
      \begin{equation*}
     u(x,t)=t\sin(x).
     \end{equation*}
      The initial and boundary conditions are directly obtained from this analytical solution.\\

       \textbf{Table 1} $\label{t1}$. Unconditional stability and convergence rate $O(h^{4}+k^{2-\frac{\lambda}{2}})$ for the two-level
       fourth-order approach with $\log_{2}(r(h)),$ varying spacing $h=\Delta x$ and time step $k=\Delta t$. In this test we take
        $\lambda=0.9$, $\alpha=1-\lambda=0.1$ and $k=h^{\frac{4}{2-\frac{\lambda}{2}}}.$

             \begin{equation*}
            \begin{tabular}{|c|c|c|c|c|}
            \hline
            $k$ & $\||u|\|_{L^{2}}$ &$\||U|\|_{L^{2}}$ & $\||E(h)|\|_{L^{2}}$ & RC\\
            \hline
            $h^{-1}$ & $2.4192\times10^{-1}$ & $2.4192\times10^{-1}$  &  $3.2480\times10^{-2}$ &   --  \\
            \hline
            $h^{-2}$ & $1.3890\times10^{-1}$  & $1.3889\times10^{-1}$ & $2.0131\times10^{-3}$ & 4.0121 \\
            \hline
            $h^{-3}$ & $7.1804\times10^{-2}$ & $7.1803\times10^{-2}$  &  $1.2685\times10^{-4}$ & 3.9882 \\
            \hline
           $h^{-4}$ & $3.6520\times10^{-2}$ & $3.6521\times10^{-2}$   &  $7.9282\times10^{-6}$ & 4.0000 \\
           \hline
           $h^{-5}$ & $1.8436\times10^{-2}$ & $1.8436\times10^{-2}$   &  $4.9514\times10^{-7}$ & 4.0011 \\
            \hline
          \end{tabular}
            \end{equation*}

            \textbf{Table 2} $\label{t2}$. Stability and Convergence rate $O(h^{4}+k^{2-\frac{\lambda}{2}})$ of the new technique with
        $\log_{2}(r(h)),$ varying spacing $h=\Delta x$ and time step $k=\Delta t$. Here we take $\lambda=0.66$, $\alpha=1-\lambda=0.34$
        and $k=h^{\frac{4}{2-\frac{\lambda}{2}}}.$

         \begin{equation*}
            \begin{tabular}{|c|c|c|c|c|}
            \hline
            $k$ & $\||u|\|_{L^{2}}$ &$\||U|\|_{L^{2}}$ & $\||E(h)|\|_{L^{2}}$ & RC\\
            \hline
            $h^{-1}$ & $2.4211\times10^{-1}$ & $2.4212\times10^{-1}$  &  $3.2389\times10^{-2}$ &   --  \\
            \hline
            $h^{-2}$ & $1.3894\times10^{-1}$ & $1.3893\times10^{-1}$  & $2.03254\times10^{-3}$ & 3.9982 \\
            \hline
            $h^{-3}$ & $7.2046\times10^{-2}$ & $7.1923\times10^{-2}$  &  $1.2665\times10^{-4}$ & 4.0044 \\
            \hline
           $h^{-4}$ & $3.6541\times10^{-2}$ & $3.6541\times10^{-2}$   &  $7.8599\times10^{-6}$ & 4.0102 \\
           \hline
           $h^{-5}$ & $1.8458\times10^{-2}$ & $1.8458\times10^{-2}$   &  $4.9075\times10^{-7}$ & 4.0016 \\
            \hline
          \end{tabular}
            \end{equation*}

      $\bullet$ \textbf{Example $2.$} Suppose $\Omega$ be the open interval $(0,1)$ and $T$ be the final time, $T=1.$ We assume that the parameters
    $\lambda\in\{0.66,0.9\}$ and $\alpha=1-\lambda$. We choose the function $q(t)=e^{t},$ $p(t)=0$, $g(x,t)=1-\sin(2t)$ and
    $s(x,t)=[\pi^{2}t^{2}e^{t}+t^{2}(1-\sin(2t))+2\Gamma(3-\lambda)^{-1}t^{2-\lambda}]\sin(\pi x)$ such that the analytical solution is given in
    \cite{aaa} by
    \begin{equation*}
    u(x,t)=t^{2}\sin(\pi x).
    \end{equation*}
    The initial and boundary conditions are determined from the exact solution $u$.\\

       \textbf{Table 3} $\label{t3}$. Unconditional stability and convergence rate $O(k^{2-\frac{\lambda}{2}}+h^{4})$ for the two-level
       approach with $\log_{2}(r(h)),$ varying time step $k=\Delta t$ and spacing $h=\Delta x$. In this example we take
        $\lambda=0.9$, $\alpha=1-\lambda=0.1$ and $k=h^{\frac{4}{2-\frac{\lambda}{2}}}.$

      \begin{equation*}
            \begin{tabular}{|c|c|c|c|c|}
            \hline
            $k$ & $\||u|\|_{L^{2}}$ &$\||U|\|_{L^{2}}$ & $\||E(h)|\|_{L^{2}}$ & RC\\
            \hline
            $h^{-1}$ & $9.2363\times10^{-1}$ & $9.2361\times10^{-1}$  &  $4.4483\times10^{-3}$ &   --  \\
            \hline
            $h^{-2}$ & $ 3.7291\times10^{-1}$ & $ 3.7292\times10^{-1}$  & $3.2732\times10^{-4}$ & 3.7645 \\
            \hline
            $h^{-3}$ & $1.1238\times10^{-1}$ & $1.1237\times10^{-1}$  &  $2.3565\times10^{-5}$ & 3.7960 \\
            \hline
           $h^{-4}$ & $2.9527\times10^{-2}$ & $2.9527\times10^{-2}$   &  $1.6832\times10^{-6}$ & 3.8074 \\
           \hline
           $h^{-5}$ & $7.5431\times10^{-3}$ & $7.5431\times10^{-3}$   &  $1.1829\times10^{-7}$ & 3.8308 \\
            \hline
          \end{tabular}
            \end{equation*}

            \textbf{Table 4} $\label{t4}$. Stability and Convergence rate $O(k^{2-\frac{\lambda}{2}}+h^{4})$ of the new technique with
       $\log_{2}(r(h)),$ varying spacing $h=\Delta x$ and time step $k=\Delta t$. Here we take $\lambda=0.66$, $\alpha=1-\lambda=0.34$
       and $k=h^{\frac{4}{2-\frac{\lambda}{2}}}.$

          \begin{equation*}
            \begin{tabular}{|c|c|c|c|c|}
            \hline
            $k$ & $\||u|\|_{L^{2}}$ &$\||U|\|_{L^{2}}$ & $\||E(h)|\|_{L^{2}}$ & RC\\
            \hline
            $h^{-1}$ & $8.7304\times10^{-1}$ & $8.7302\times10^{-1}$  &  $1.0445\times10^{-2}$ &   --  \\
            \hline
            $h^{-2}$ & $3.7291\times10^{-1}$ & $3.7292\times10^{-1}$  & $7.4499\times10^{-4}$ & 3.8094 \\
            \hline
            $h^{-3}$ & $1.1234\times10^{-1}$ & $1.1234\times10^{-1}$  &  $5.2836\times10^{-5}$ & 3.8176 \\
            \hline
           $h^{-4}$ & $2.9519\times10^{-2}$ & $2.9519\times10^{-2}$   &  $3.7499\times10^{-6}$ & 3.8166 \\
           \hline
           $h^{-5}$ & $7.5327\times10^{-3}$ & $7.5327\times10^{-3}$   &  $2.6315\times10^{-7}$ & 3.8329 \\
            \hline
          \end{tabular}
            \end{equation*}
          We observe from this table that the proposed method is temporal second order convergent and spatial fourth order accurate.

     \section{General conclusions and future works}\label{sec5}
     In this paper we proposed a two-level fourth-order approach for solving the time-fractional convection-diffusion-reaction equation with 
     variable coefficients and source terms. Both stability analysis and error estimates of the numerical scheme have been deeply analyzed. The
     theory has suggested that the proposed method is unconditionally stable, convergence with order $O(k^{2-\frac{\lambda}{2}})$ in time and fourth
     accurate in space (see Theorem $\ref{tt1}$). The numerical tests are performed for values of the parameter $\lambda$ lying in the interval $(0,1)$
     and they confirmed the theoretical result provided in Section $\ref{sec3}$ (see Theorem $\ref{tt1}$, \textbf{Tables 1-4} and Figures 
    $\ref{figure1}$-$\ref{figure4}$). Especially, the graphs (Figures $\ref{figure1}$-$\ref{figure4}$) show that the new method is both unconditionally 
    stable and convergent whereas \textbf{Tables 1-4} indicate the convergence rate (accurate of order $O(k^{2-\frac{\lambda}{2}})$ in time and 
    fourth-order convergent in space) of the algorithm. Our future works will consider the numerical solution
     of the two-dimensional time-fractional convection-diffusion-reaction equation with variable coefficients using the new approach. Furthermore, the 
     development of more efficient algorithms to reducing computational costs will be also subjected of other works.\\

     \textbf{Acknowledgment.} The work of the first two authors has been partially supported by the Deanship of Scientific Research of Imam Mohammad Ibn 
     Saud Islamic University (IMSIU) under the Grant No. $331203.$\\

          \begin{figure}
         \begin{center}
          Stability and convergence rate of a two-level fourth-order approach for time-fractional advection-diffusion with $\lambda=0.9$ and
       $k=h^{\frac{4}{2-\frac{\lambda}{2}}}.$
         \begin{tabular}{c c}
         \psfig{file=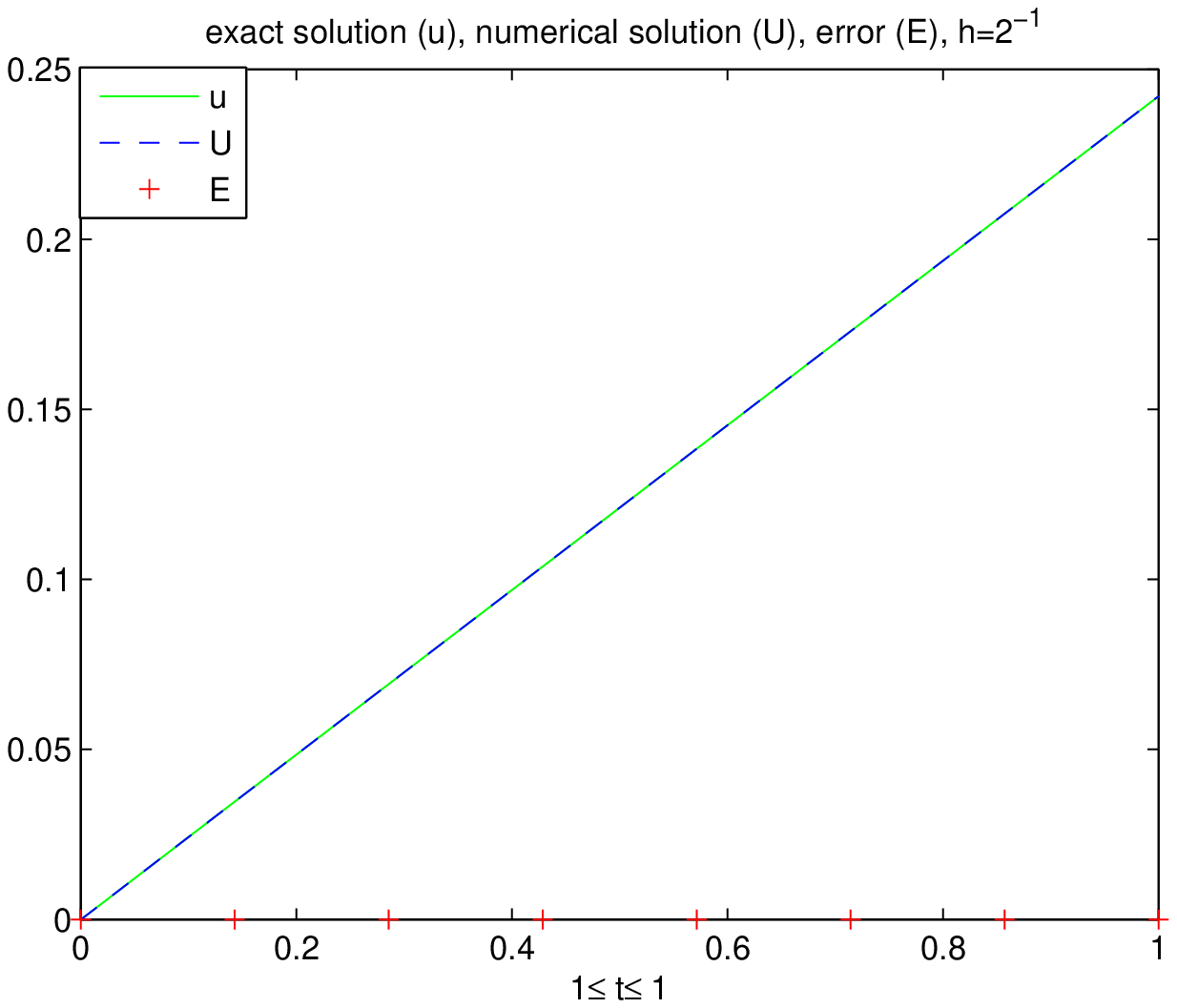,width=7cm} & \psfig{file=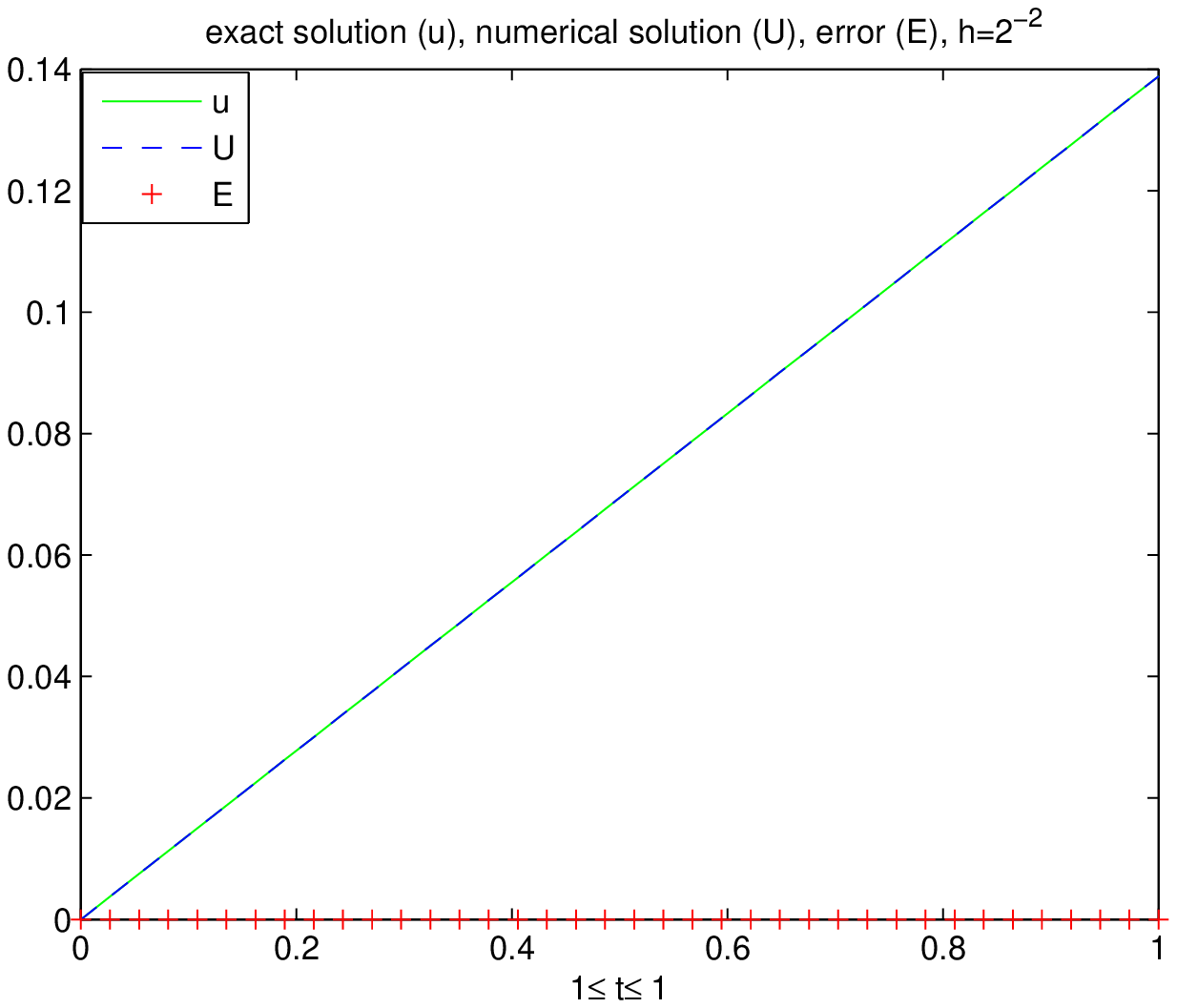,width=7cm}\\
         \psfig{file=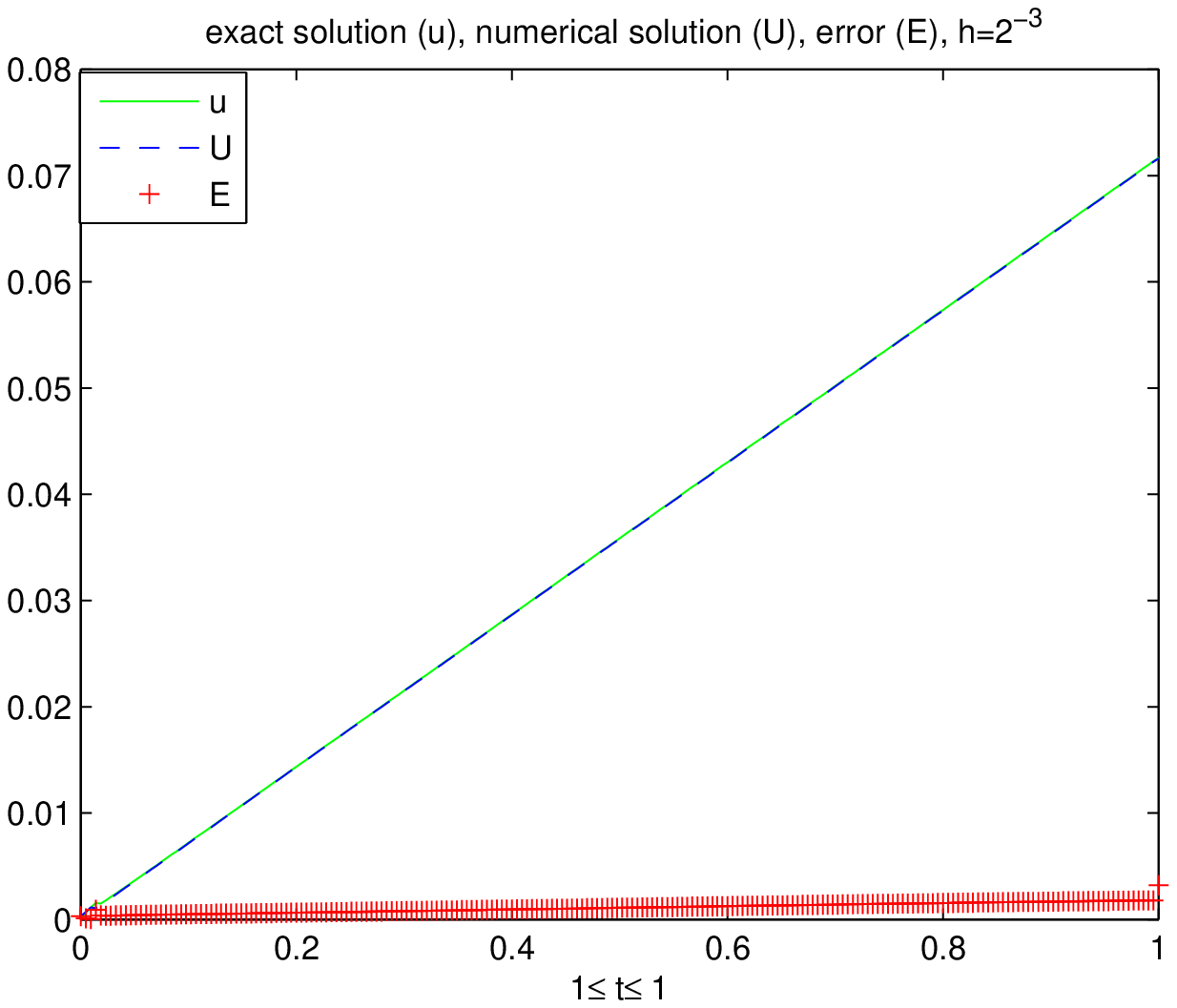,width=7cm} & \psfig{file=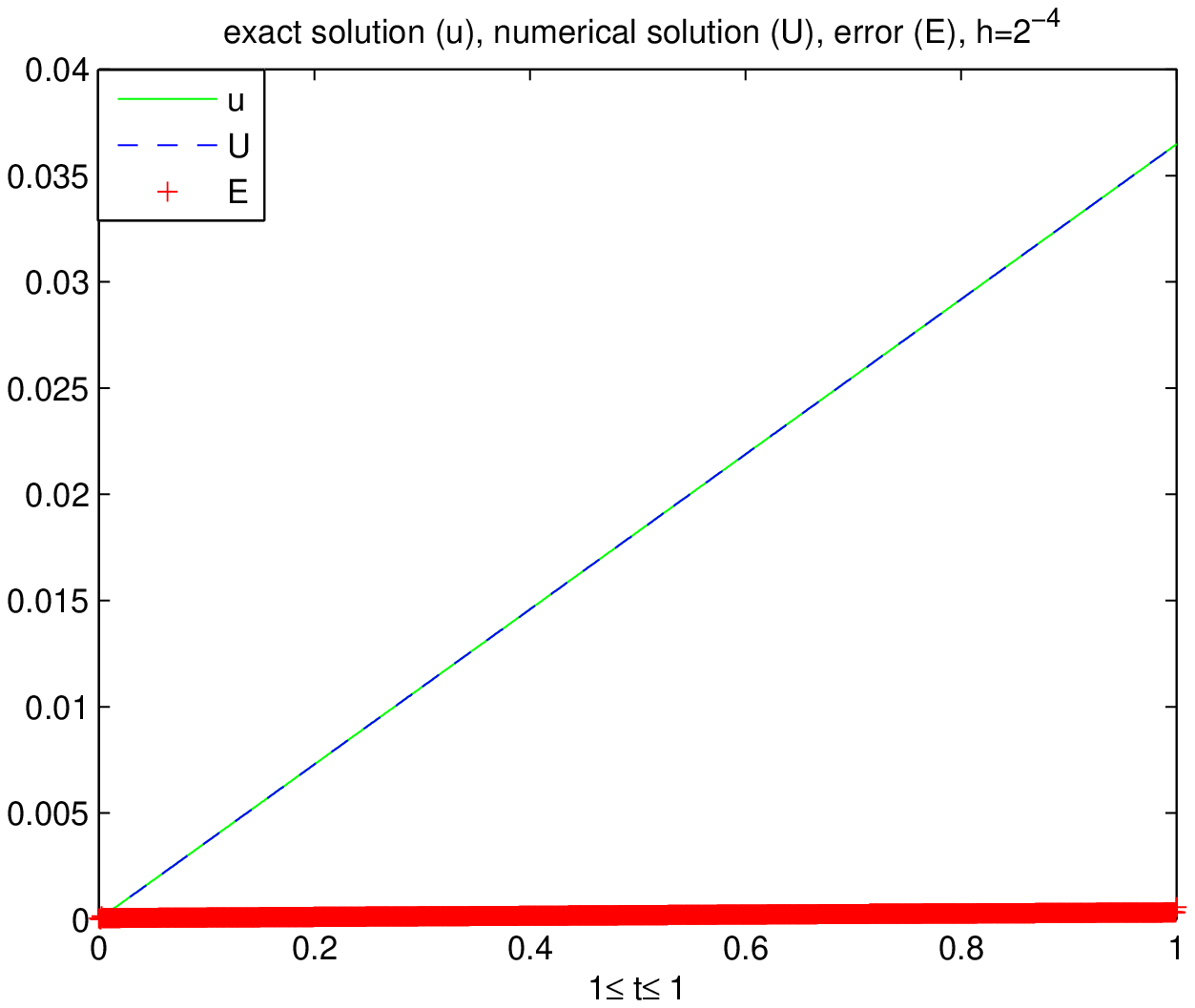,width=7cm}\\
         \end{tabular}
        \end{center}
        \caption{Exact solution (u: in green), Numerical solution (U: in blue) and Error (E: in red) for Problem 1}
        \label{figure1}
        \end{figure}

           \begin{figure}
         \begin{center}
          Analysis of stability and convergence rate of a two-level fourth-order method for time-fractional advection-diffusion with
      $\lambda=0.66$ and $k=h^{\frac{4}{2-\frac{\lambda}{2}}}.$
         \begin{tabular}{c c}
         \psfig{file=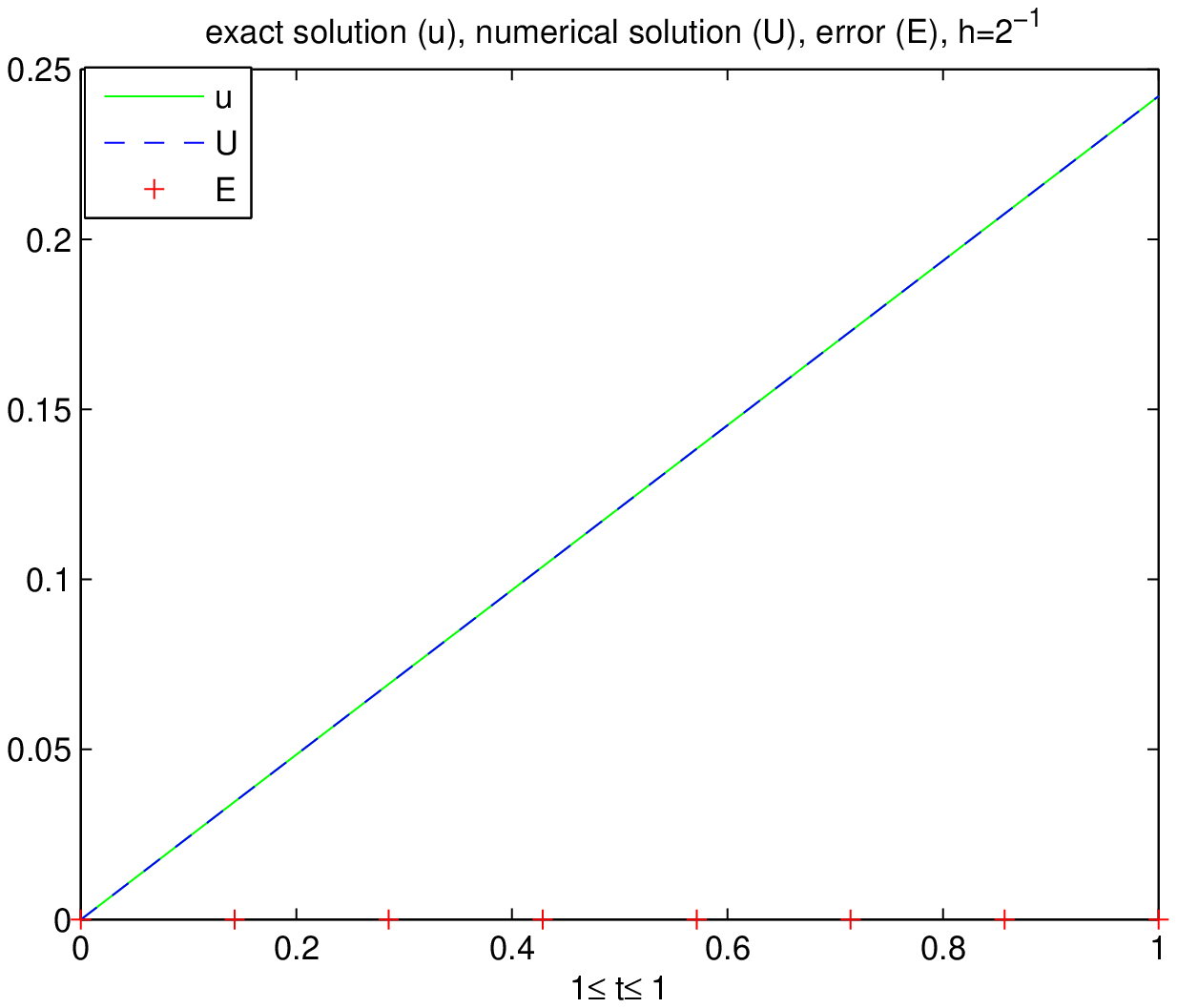,width=7cm} & \psfig{file=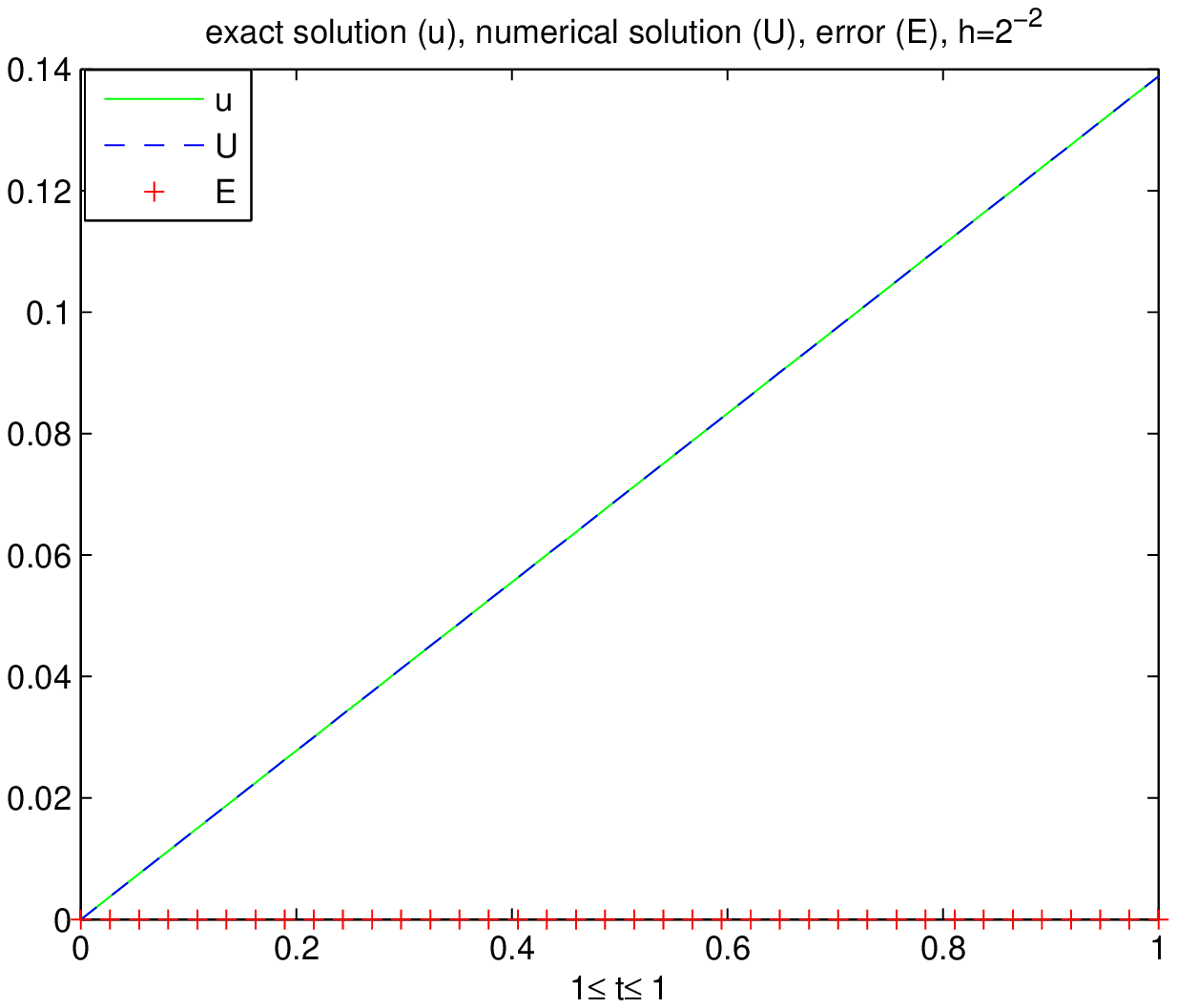,width=7cm}\\
         \psfig{file=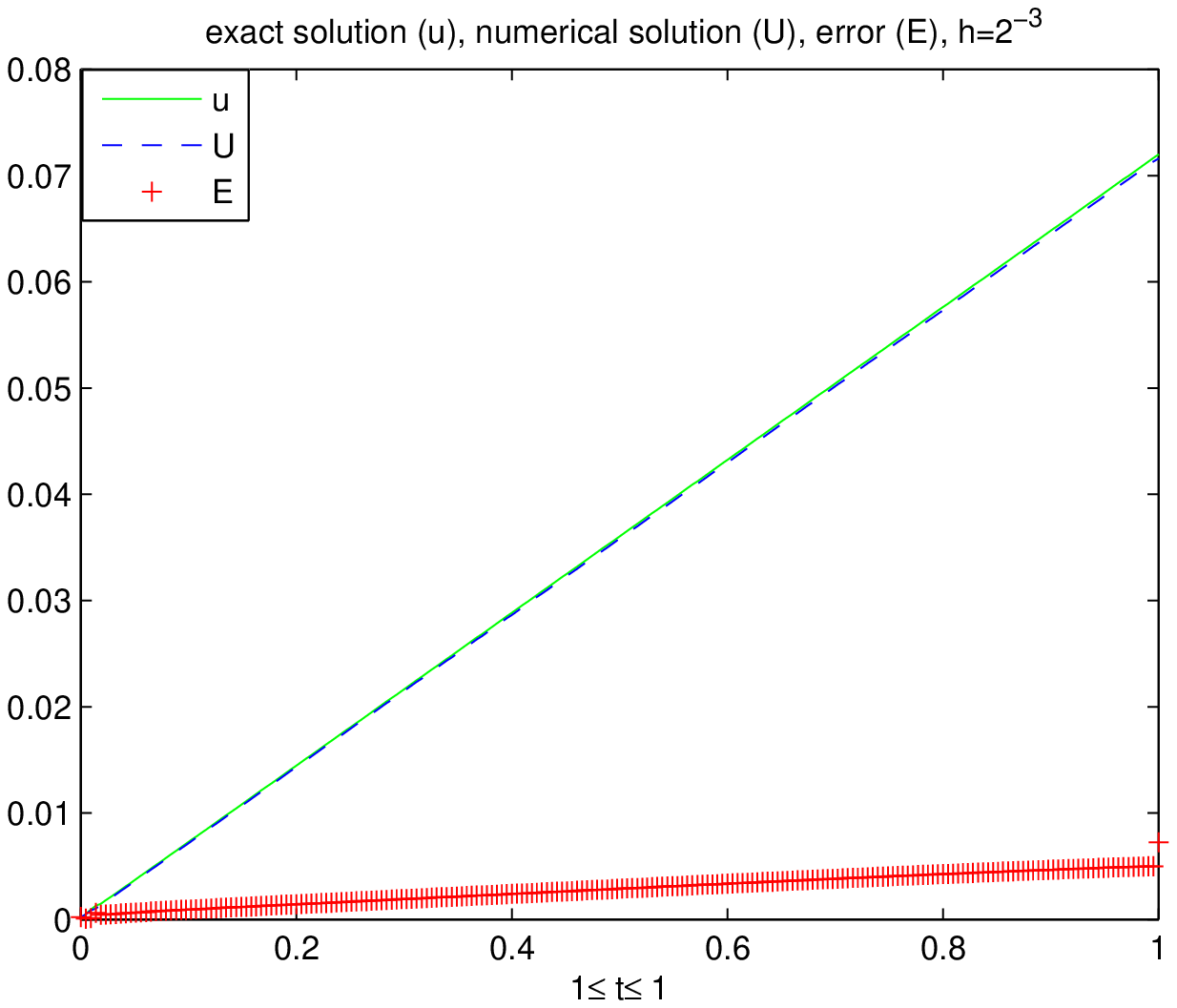,width=7cm} & \psfig{file=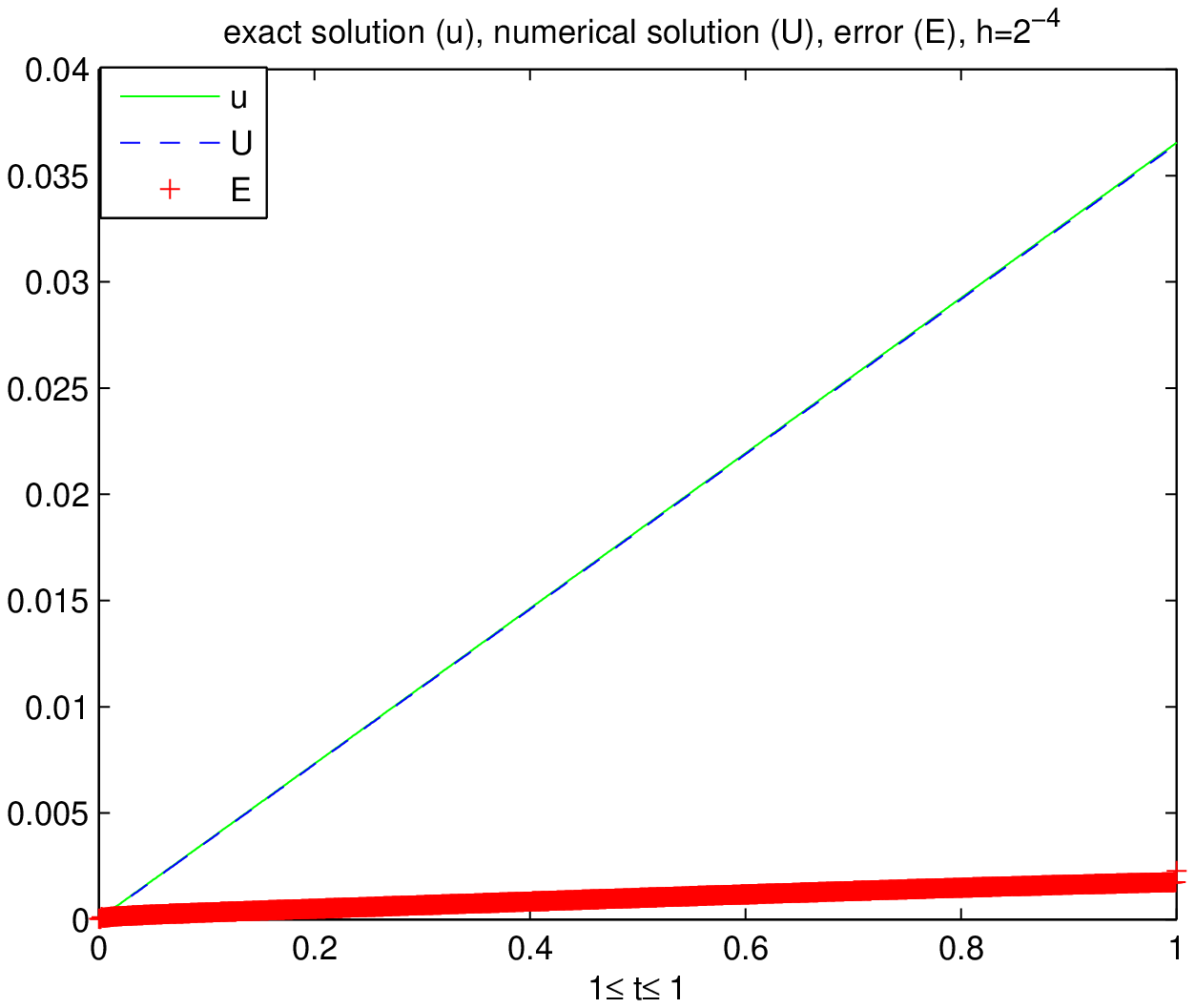,width=7cm}\\
         \end{tabular}
        \end{center}
        \caption{Exact solution (u: in green), Numerical solution (U: in blue) and Error (E: in red) for Problem 1}
        \label{figure2}
        \end{figure}

       \begin{figure}
         \begin{center}
          Stability and convergence rate of a two-level fourth-order numerical scheme for time-fractional advection-diffusion with $\lambda=0.9$ and
      $k=h^{\frac{4}{2-\frac{\lambda}{2}}}.$
         \begin{tabular}{c c}
         \psfig{file=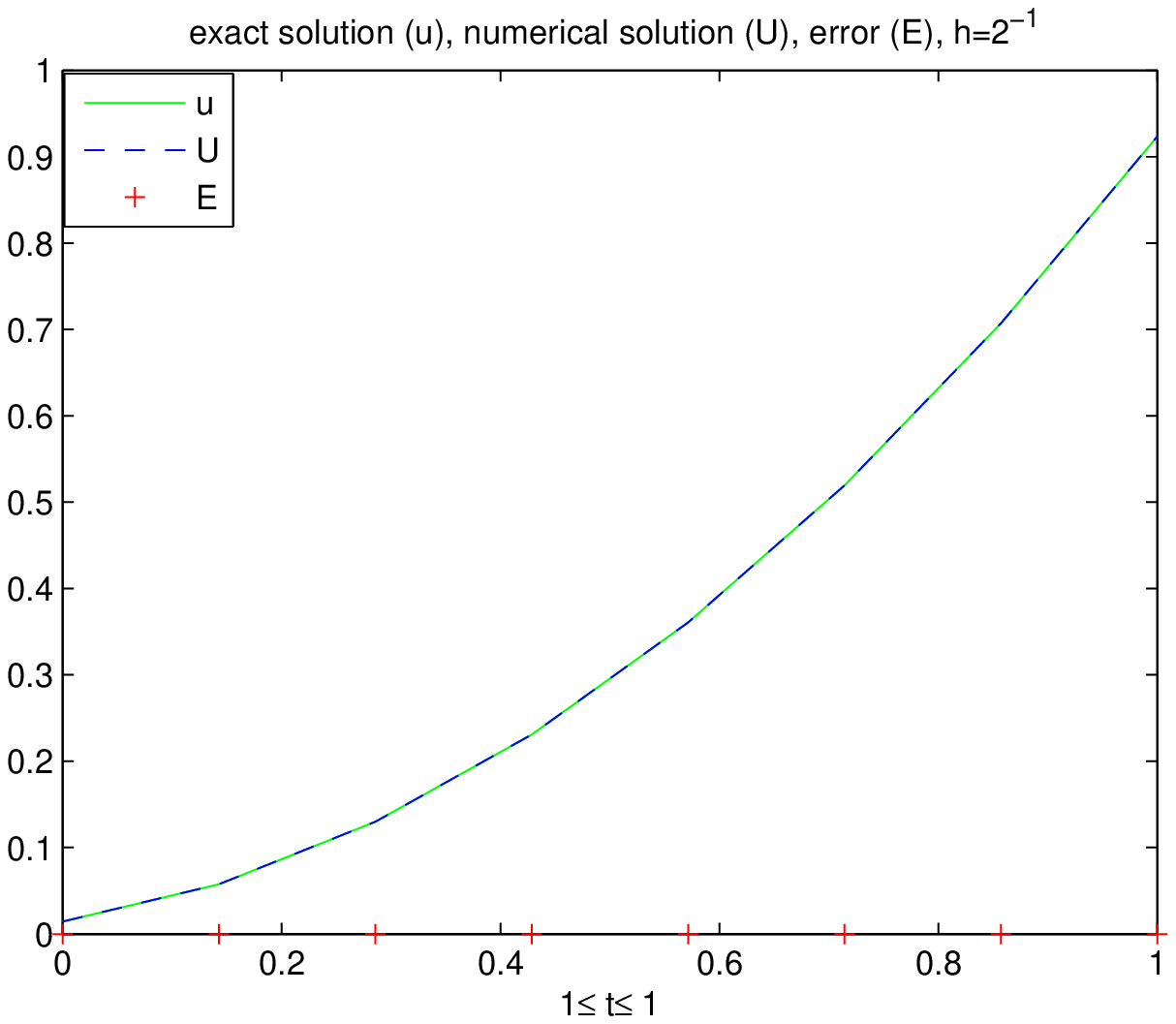,width=7cm} & \psfig{file=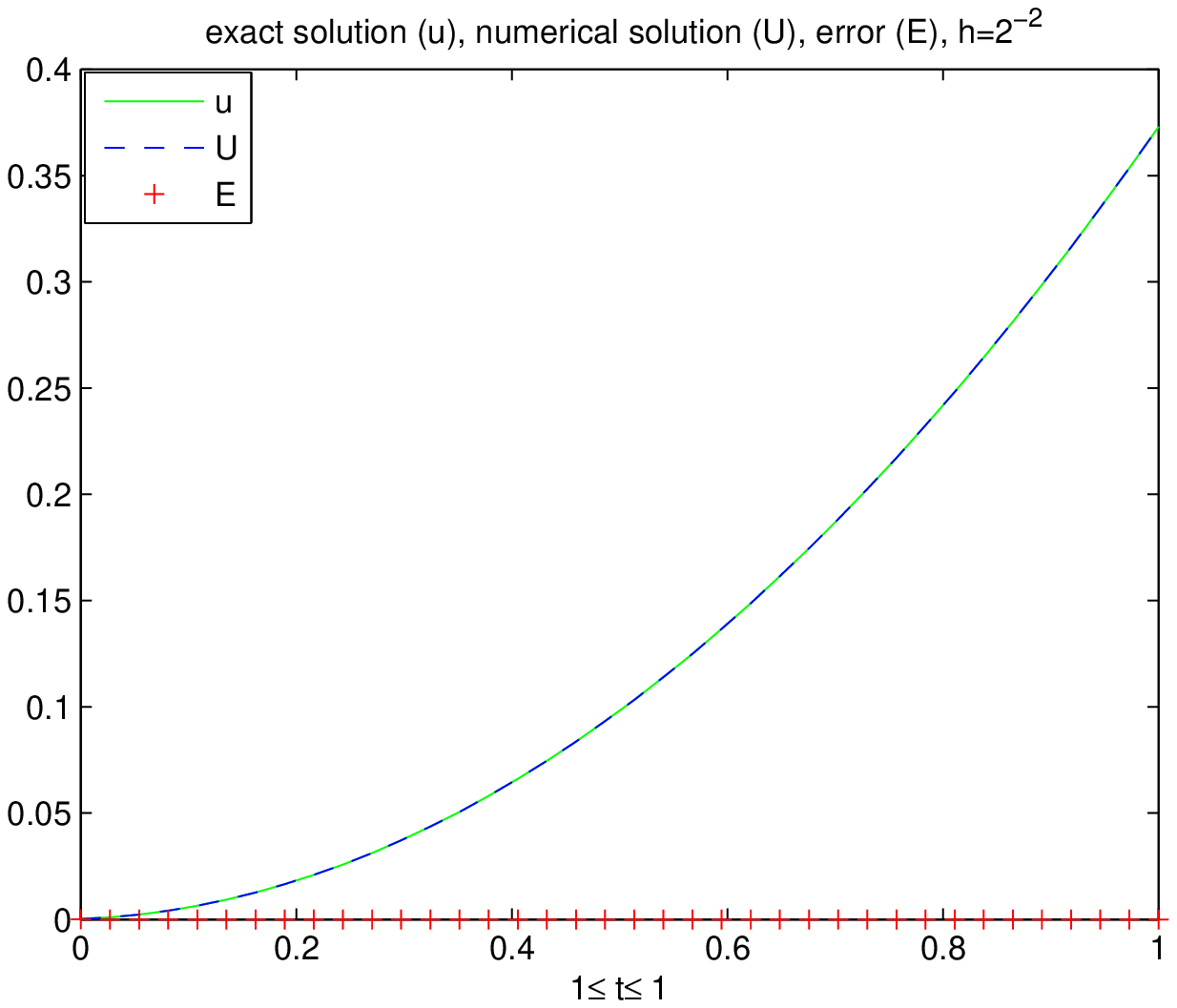,width=7cm}\\
         \psfig{file=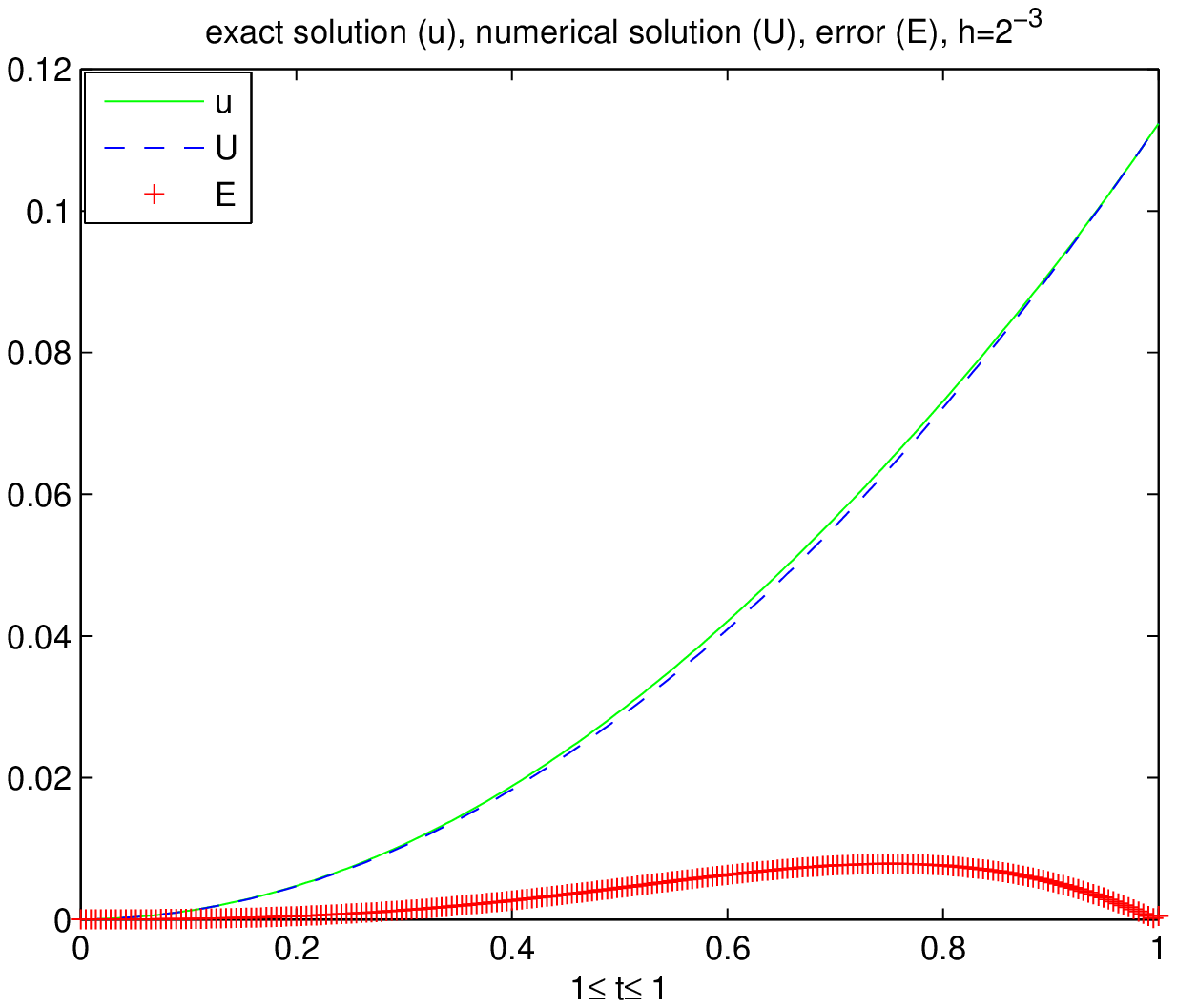,width=7cm} & \psfig{file=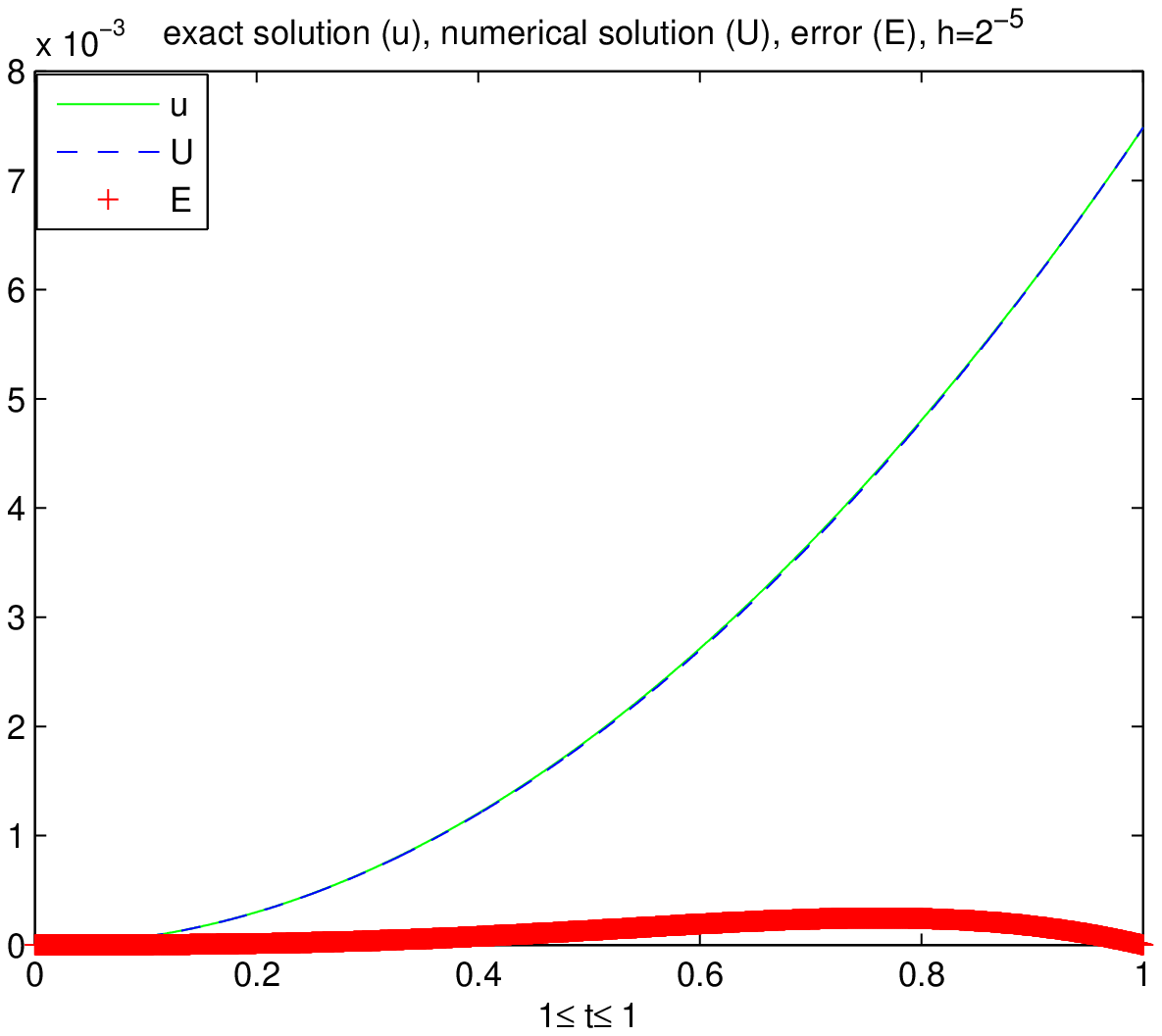,width=7cm}\\
         \end{tabular}
        \end{center}
        \caption{Exact solution (u: in green), Numerical solution (U: in blue) and Error (E: in red) for Problem 2}
        \label{figure3}
        \end{figure}

         \begin{figure}
         \begin{center}
          Analysis of stability and convergence rate of a two-level fourth-order approach for for time-fractional advection-diffusion with $\lambda=0.66$
        and $k=h^{\frac{4}{2-\frac{\lambda}{2}}}.$
         \begin{tabular}{c c}
         \psfig{file=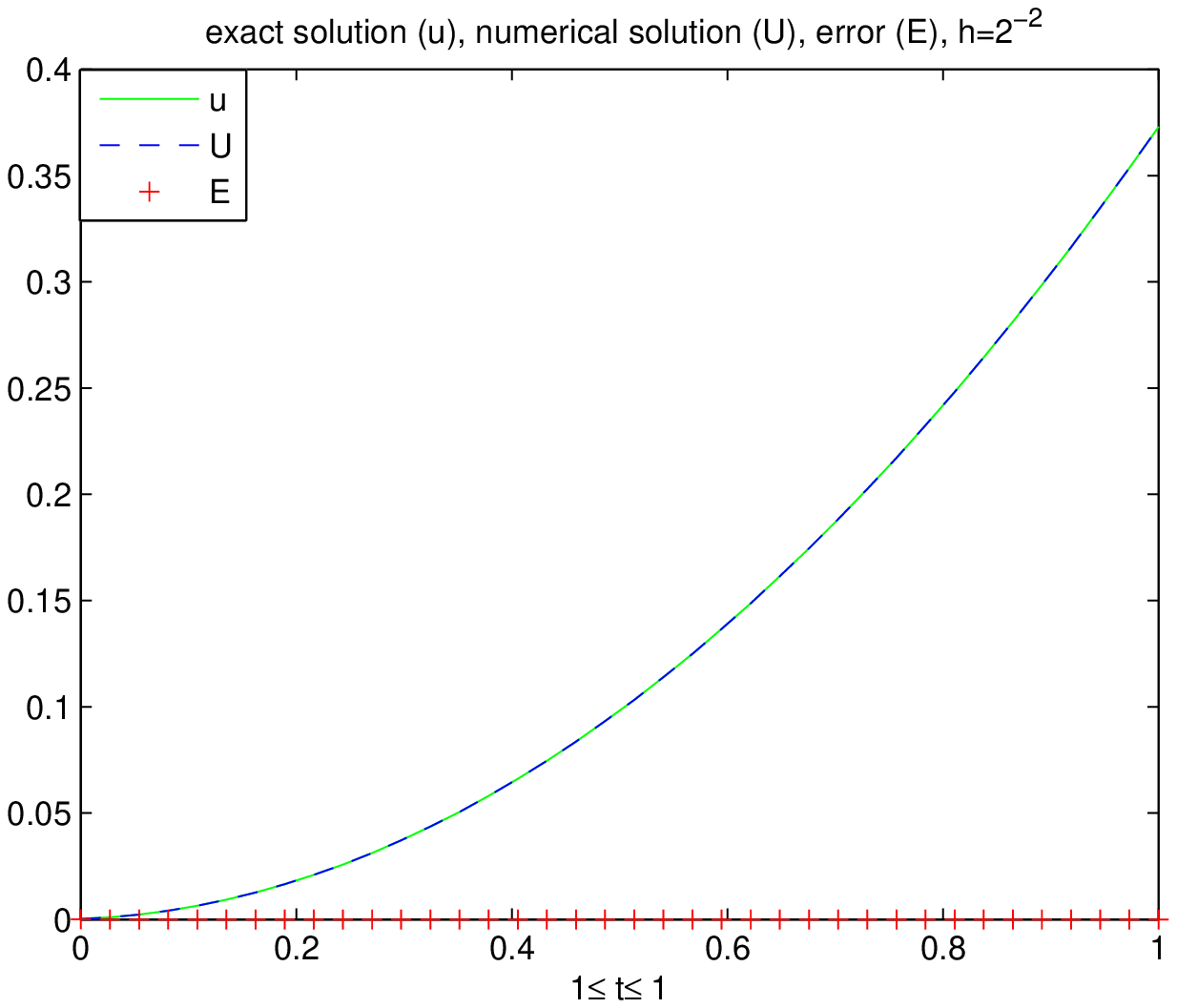,width=7cm} & \psfig{file=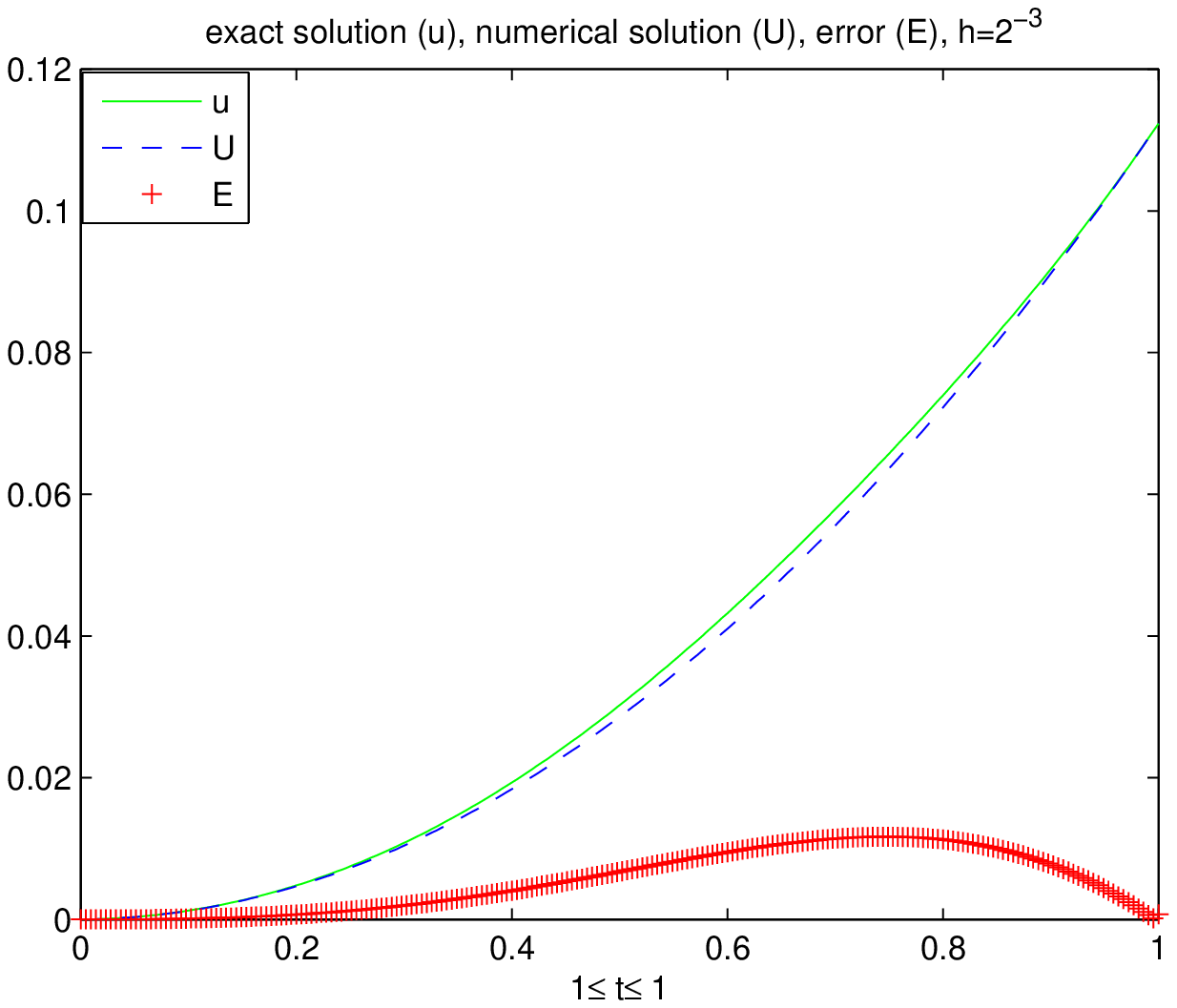,width=7cm}\\
         \psfig{file=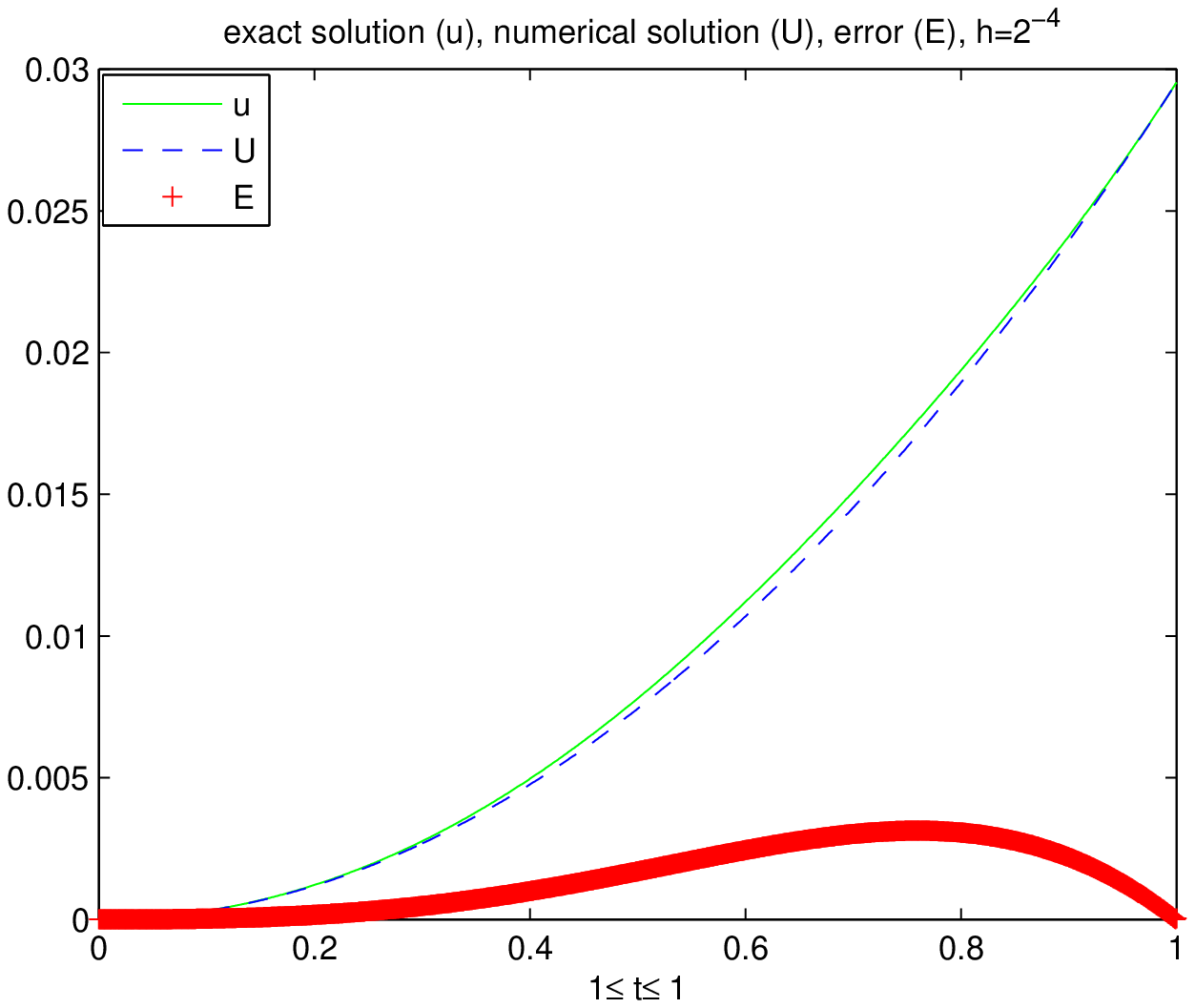,width=7cm} & \psfig{file=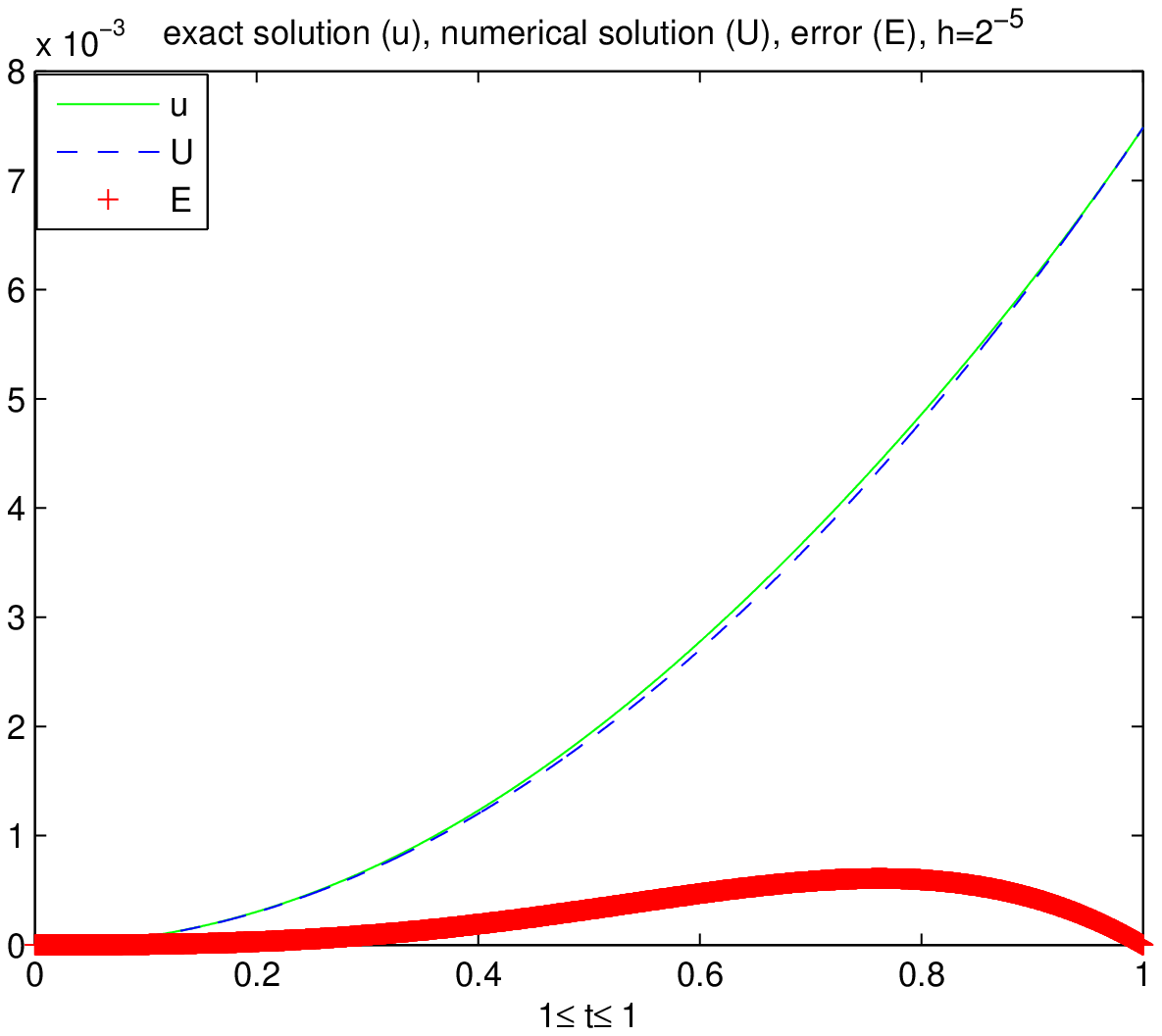,width=7cm}\\
         \end{tabular}
        \end{center}
        \caption{Exact solution (u: in green), Numerical solution (U: in blue) and Error (E: in red) for Problem 2}
        \label{figure4}
        \end{figure}
     \end{document}